\pgfplotsset{compat=1.12}
\newtheorem{lemma}{Lemma}[section]
\newtheorem{proposition}[lemma]{Proposition}
\newtheorem{theorem}[lemma]{Theorem}
\newtheorem{maintheorem}[lemma]{Theorem}
\newtheorem{corollary}[lemma]{Corollary}
\theoremstyle{remark}
\newtheorem*{definition*}{Definition}
\newtheorem{remark}[lemma]{Remark}
\DeclarePairedDelimiter{\abs}{|}{|}
\DeclareMathOperator{\Arg}{Arg}
\newcommand{\C}{\mathbb{C}}
\newcommand{\N}{\mathbb{N}}
\newcommand{\R}{\mathbb{R}}
\title[]{The distribution of the maximum protection number in simply generated trees}
\author[C. Heuberger \and S. J. Selkirk \and S. Wagner]{Clemens Heuberger \and Sarah J. Selkirk \and Stephan Wagner}
\address[Clemens Heuberger, Sarah J. Selkirk] {Institut
  f\"ur Mathematik, Alpen-Adria-Uni\-ver\-si\-t\"at Klagenfurt,
  Universit\"atsstra\ss e 65--67, 9020 Klagenfurt, Austria. }
\email{\href{mailto:clemens.heuberger@aau.at}{clemens.heuberger (at) aau.at}, \href{mailto:sarah.selkirk@aau.at}{sarah.selkirk (at) aau.at}}
\address[Stephan Wagner] {Department of Mathematics, Uppsala Universitet, Box 480, 751 06 Uppsala, Sweden, and Institute of Discrete Mathematics, TU Graz, Steyrergasse 30, 8010 Graz, Austria.}
\email{\href{mailto:stephan.wagner@math.uu.se}{stephan.wagner (at) math.uu.se}}
\thanks{The research of C.~Heuberger and S.~J.~Selkirk was funded in part by the
  Austrian Science Fund (FWF) [10.55776/P28466], \emph{Analytic Combinatorics: Digits, Automata and Trees} and
  Austrian Science Fund (FWF) [10.55776/DOC78]. S.~Wagner is supported by the Knut and Alice Wallenberg Foundation, grant KAW 2017.0112, and the Swedish research council (VR), grant  2022-04030. For open access purposes, the authors have applied a CC BY public copyright license to any author-accepted manuscript version arising from this submission.}
\keywords{Protection number, simply generated trees, generating functions}
\subjclass[2010]{05C05; 05A15, 05A16, 05C80}
\begin{document}
\pagestyle{plain}

\begin{abstract}
  The protection number of a vertex $v$ in a tree is the length of the shortest
  path from $v$ to any leaf contained in the maximal subtree where $v$ is the
  root. In this paper, we determine the distribution of the maximum protection
  number of a vertex in simply generated trees, thereby refining a recent
  result of Devroye, Goh and Zhao. Two different cases can be observed: if the
  given family of trees allows vertices of outdegree $1$, then the maximum
  protection number is on average logarithmic in the tree size, with a discrete
  double-exponential limiting distribution. If no such vertices are allowed,
  the maximum protection number is doubly logarithmic in the tree size and
  concentrated on at most two values. These results are obtained by studying
  the singular behaviour of the generating functions of trees with bounded
  protection number.
  While a general distributional result by Prodinger and Wagner can be used in the first case,
  we prove a variant of that result in the second case.
\end{abstract}

\maketitle

\section{Introduction}

\subsection{Simply generated trees}

Simply generated trees were introduced by Meir and Moon \cite{Meir-Moon:1978:simply-gen}, and owing to 
their use in describing an entire class of trees, have created a general framework for studying random 
trees. A simply generated family of rooted trees is characterised by a sequence of weights associated 
with the different possible outdegrees of a vertex. Specifically, for a given sequence of nonnegative 
real numbers $w_j$ ($j \geq 0$), one defines the weight of a rooted ordered tree to be the product 
$\prod_v w_{d(v)}$ over all vertices of the tree, where $d(v)$ denotes the outdegree (number of 
children) of $v$. Letting $\Phi(t) = \sum_{j \geq 0} w_j t^j$ be the weight generating function and 
$Y(x)$ the generating function in which the coefficient of $x^n$ is the sum of the weights over all 
$n$-vertex rooted ordered trees, one has the fundamental relation 
\begin{equation}\label{eq:Y_functional_equation}
    Y(x) = x\Phi(Y(x)).
\end{equation}
Common examples of simply generated trees are: plane trees with weight generating function 
$\Phi(t) = 1/(1-t)$; binary trees ($\Phi(x) = 1+t^2$); pruned binary trees ($\Phi(t) = (1+t)^2$); and 
labelled trees ($\Phi(t) = e^t$). In the first three examples, $Y(x)$ becomes an ordinary generating 
function with the total weight being the number of trees in the respective family, while $Y(x)$ can 
be seen as an exponential generating function in the case of labelled trees.

In addition to the fact that the notion of simply generated trees covers many important examples, 
there is also a strong connection to the probabilistic model of Bienaym\'e--Galton--Watson trees: here, 
one fixes a probability distribution on the set of nonnegative integers. Next, a random tree is 
constructed by starting with a root that produces offspring according to the given distribution. In 
each subsequent step, all vertices of the current generation also produce offspring according to the 
same distribution, all independent of each other and independent of all previous generations.  The 
process stops if none of the vertices of a generation have children. If the weights in the construction 
of a simply generated family are taken to be the corresponding probabilities of the offspring 
distribution, then one verifies easily that the distribution of a random $n$-vertex tree from that 
family (with probabilities proportional to the weights) is the same as that of the 
Bienaym\'e--Galton--Watson process, conditioned on the event that the final tree has $n$ vertices. 

Conversely, even if the weight sequence of a simply generated family does not represent a probability 
measure, it is often possible to determine an equivalent probability measure that produces the same 
random tree distribution. For example, random plane trees correspond to a geometric distribution 
while random rooted labelled trees correspond to a Poisson distribution. We refer 
to~\cite{Drmota:2009:random} and \cite{Janson:2012:simply-generated-survey} for more background on 
simply generated trees and Bienaym\'e--Galton--Watson trees.

\subsection{Protection numbers in trees}

Protection numbers in trees measure the distance to the nearest leaf successor. Formally, this can 
be expressed as follows.

\begin{definition*}[Protection number]
    The \emph{protection number of a vertex $v$} is the length of the shortest path from $v$ to any leaf 
    contained in the maximal subtree where $v$ is the root.
\end{definition*}
Alternatively, the protection number can be defined recursively: a leaf has protection number $0$, the 
parent of a leaf has protection number $1$, and generally the protection number of an interior vertex 
is the minimum of the protection numbers of its children plus $1$. In this paper, we will be 
particularly interested in the \emph{maximum protection number} of a tree, which is the largest 
protection number among all vertices. Figure~\ref{fig:tree} shows an example of a tree along with the 
protection numbers of all its vertices.

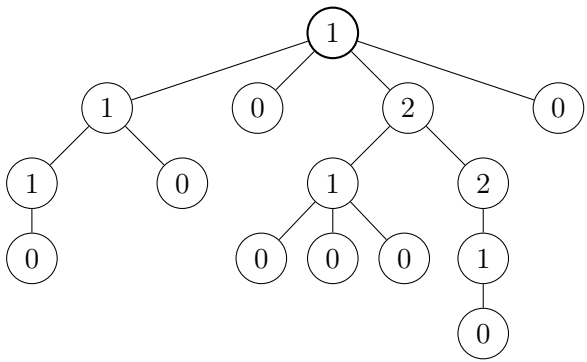
\begin{figure}[ht]
    \begin{tikzpicture}[scale = 0.5]
        \node[circle, draw, thick] (0) at (0, 0) {1};
        \node[circle, draw] (1) at (6, -2) {0};
        \node[circle, draw] (2) at (2, -2) {2};
        \node[circle, draw] (3) at (-2, -2) {0};
        \node[circle, draw] (4) at (-6, -2) {1};
        
        \node[circle, draw] (5) at (-4, -4) {0};
        \node[circle, draw] (6) at (-8, -4) {1};
        \node[circle, draw] (7) at (-8, -6) {0};
        
        \node[circle, draw] (8) at (0, -4) {1};
        \node[circle, draw] (9) at (4, -4) {2};

        \node[circle, draw] (10) at (4, -6) {1};
        \node[circle, draw] (11) at (4, -8) {0};
        
        \node[circle, draw] (12) at (-1.9, -6) {0};
        \node[circle, draw] (13) at (0, -6) {0};
        \node[circle, draw] (14) at (1.9, -6) {0};
        
        \draw[-] (0) to (1);
        \draw[-] (0) to (2);
        \draw[-] (0) to (3);
        \draw[-] (0) to (4);
        
        \draw[-] (2) to (8);
        \draw[-] (2) to (9);
        
        \draw[-] (8) to (12);
        \draw[-] (8) to (13);
        \draw[-] (8) to (14);

        \draw[-] (9) to (10);
        \draw[-] (10) to (11);

        \draw[-] (4) to (5);
        \draw[-] (4) to (6);
        
        \draw[-] (6) to (7);
    
    \end{tikzpicture}
    \caption{A plane tree with 15 vertices and the protection number of each vertex indicated. The maximum protection 
    number of this tree is $2$.}
    \label{fig:tree}
\end{figure}

The study of protection numbers in trees began with Cheon and Shapiro \cite{Cheon-Shapiro:2008:protec} considering the average number of vertices with 
protection number of at least 2 (called $2$-protected) in ordered trees. Several other authors contributed to knowledge in this direction, 
by studying the number of $2$-protected vertices in various types of trees: $k$-ary trees~\cite{Mansour:2011:protec}; digital search 
trees~\cite{Du-Prodinger:2012:notes}; binary search trees~\cite{Mahmoud-Ward:2012:protec}; ternary search trees~\cite{Holmgren-Janson:2015:asymp}; 
tries and suffix trees~\cite{Gaither-Homma-Sellke-Ward:2012:protec}; random recursive trees~\cite{Mahmoud-Ward:2015:asymp}; and general simply generated trees 
from which some previously known cases were also obtained~\cite{Devroye-Janson:2014:protec}.

Generalising the concept of a vertex being $2$-protected, $k$-protected
vertices---when a vertex has protection number at least $k$---also became a
recent topic of interest. Devroye and Janson \cite{Devroye-Janson:2014:protec}
proved convergence of the probability that a random vertex in a random simply
generated tree has protection number $k$. Copenhaver gave a closed formula for
the number of $k$-protected vertices in all unlabelled rooted plane trees on
$n$ vertices along with expected values~\cite{Copenhaver:2016}, and these results were extended by
Heuberger and
Prodinger~\cite{Heuberger-Prodinger:2017:protec-number-plane-trees}. A study of
$k$-protected vertices in binary search trees was done by
B\'{o}na~\cite{Bona:2014} and B\'{o}na and
Pittel~\cite{Bona-Pittel:2017}. Holmgren and Janson
\cite{Holmgren-Janson:2015:limit-laws} proved general limit theorems for fringe
subtrees and related tree functionals, applications of which include a normal
limit law for the number of $k$-protected vertices in binary search trees and
random recursive trees.

Moreover, the protection number of the root of families of trees has also been studied. In~\cite{Heuberger-Prodinger:2017:protec-number-plane-trees}, 
Heuberger and Prodinger derived the probability of a plane tree having a root that is $k$-protected, the probability distribution of the 
protection number of the root of recursive trees is determined by Go{\l}{\k{e}}biewski and Klimczak in~\cite{Golebiewski-Klimczak:2019:protec}.
The protection number of the root in simply generated trees, P\'{o}lya trees, and unlabelled non-plane binary trees was studied by 
Gittenberger, Go{\l}{\k{e}}biewski, Larcher, and Sulkowska in~\cite{Gittenberger-Golebiewski-Larcher-Sulkowska:2021:protec}, where they also 
obtained results relating to the protection number of a randomly chosen vertex. 

Very recently, Devroye, Goh and Zhao~\cite{Devroye-Goh-Zhao:2023:peel} studied
the maximum protection number in Bienaym\'e--Galton--Watson trees, referring to
it as the leaf-height.  Specifically, they showed the following: if $X_n$ is
the maximum protection number in a Bienaym\'e--Galton--Watson tree conditioned
on having $n$ vertices, then $\frac{X_n}{\log n}$ converges in probability to a
constant if there is a positive probability that a vertex has exactly one
child. If this is not the case, then $\frac{X_n}{\log \log n}$ converges in
probability to a constant.

Our aim in this paper is to refine the result of Devroye, Goh and Zhao by
providing the full limiting distribution of the maximum protection number. For
our analytic approach, the framework of simply generated trees is more natural
than the probabilistic setting of Bienaym\'e--Galton--Watson trees, though as
mentioned earlier the two are largely equivalent.

\subsection{Statement of results}

As was already observed by Devroye, Goh and Zhao in
\cite{Devroye-Goh-Zhao:2023:peel}, there are two fundamentally different cases
to be considered, depending on whether or not vertices of outdegree $1$ are
allowed (have nonzero weight) in the given family of simply generated trees. If
such vertices can occur, then we find that the maximum protection number of a
random tree with $n$ vertices is on average of order $\log n$, with a discrete
double-exponential distribution in the limit.  On the other hand, if there are
no vertices of outdegree $1$, then the maximum protection number is on average
of order $\log \log n$. There is an intuitive explanation for this
phenomenon. If outdegree $1$ is allowed, it becomes easy to create vertices
with high protection number: if the subtree rooted at a vertex is an
$(h+1)$-vertex path, then this vertex has protection number $h$. On the other
hand, if outdegree $1$ is forbidden, then the smallest possible subtree rooted
at a vertex of protection number $h$ is a complete binary tree with $2^{h+1}-1$
vertices. An illustration of the two cases is given in Figure~\ref{fig:Phi0}.

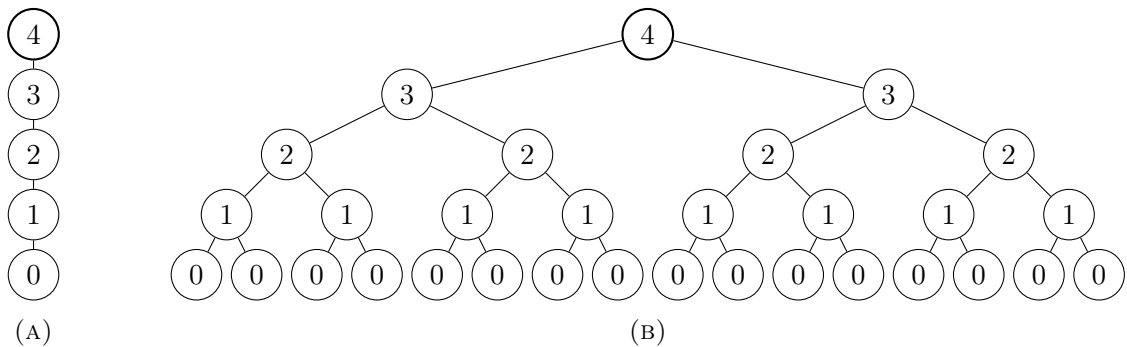
\begin{figure}[ht]

    \begin{subfigure}{0.1\textwidth}
      \centering
      \begin{tikzpicture}[scale = 0.4]
          \node[circle, draw, thick] (0) at (0, 0) {4};
          \node[circle, draw] (1) at (0, -2) {3};
          \node[circle, draw] (2) at (0, -4) {2};
          \node[circle, draw] (3) at (0, -6) {1};
          \node[circle, draw] (4) at (0, -8) {0};

          \draw[-] (0) to (1);
          \draw[-] (1) to (2);
          \draw[-] (2) to (3);
          \draw[-] (3) to (4);
        
      \end{tikzpicture}
      \caption{}
    \end{subfigure}
    \hfill
    \begin{subfigure}{0.85\textwidth}
      \centering
      \begin{tikzpicture}[scale = 0.4]
        \node[circle, draw, thick] (0) at (0, 0) {4};
        \node[circle, draw] (1) at (-8, -2) {3};
        \node[circle, draw] (2) at (8, -2) {3};
        \node[circle, draw] (3) at (-12, -4) {2};
        \node[circle, draw] (4) at (-4, -4) {2};
        \node[circle, draw] (5) at (4, -4) {2};
        \node[circle, draw] (6) at (12, -4) {2};
        \node[circle, draw] (7) at (-14, -6) {1};
        \node[circle, draw] (8) at (-10, -6) {1};
        \node[circle, draw] (9) at (-6, -6) {1};
        \node[circle, draw] (10) at (-2, -6) {1};
        \node[circle, draw] (11) at (2, -6) {1};
        \node[circle, draw] (12) at (6, -6) {1};
        \node[circle, draw] (13) at (10, -6) {1};
        \node[circle, draw] (14) at (14, -6) {1};
        
        \node[circle, draw] (15) at (-15, -8) {0};
        \node[circle, draw] (16) at (-13, -8) {0};
        \node[circle, draw] (17) at (-11, -8) {0};
        \node[circle, draw] (18) at (-9, -8) {0};
        \node[circle, draw] (19) at (-7, -8) {0};
        \node[circle, draw] (20) at (-5, -8) {0};
        \node[circle, draw] (21) at (-3, -8) {0};
        \node[circle, draw] (22) at (-1, -8) {0};
        \node[circle, draw] (23) at (1, -8) {0};
        \node[circle, draw] (24) at (3, -8) {0};
        \node[circle, draw] (25) at (5, -8) {0};
        \node[circle, draw] (26) at (7, -8) {0};
        \node[circle, draw] (27) at (9, -8) {0};
        \node[circle, draw] (28) at (11, -8) {0};
        \node[circle, draw] (29) at (13, -8) {0};
        \node[circle, draw] (30) at (15, -8) {0};

        \draw[-] (0) to (1);
        \draw[-] (0) to (2);
        \draw[-] (1) to (3);
        \draw[-] (1) to (4);
        \draw[-] (3) to (7);
        \draw[-] (3) to (8);
        \draw[-] (4) to (9);
        \draw[-] (4) to (10);
        \draw[-] (2) to (5);
        \draw[-] (2) to (6);
        \draw[-] (5) to (11);
        \draw[-] (5) to (12);
        \draw[-] (6) to (13);
        \draw[-] (6) to (14);

        \draw[-] (7) to (15);
        \draw[-] (7) to (16);
        \draw[-] (8) to (17);
        \draw[-] (8) to (18);
        \draw[-] (9) to (19);
        \draw[-] (9) to (20);
        \draw[-] (10) to (21);
        \draw[-] (10) to (22);
        \draw[-] (11) to (23);
        \draw[-] (11) to (24);
        \draw[-] (12) to (25);
        \draw[-] (12) to (26);
        \draw[-] (13) to (27);
        \draw[-] (13) to (28);
        \draw[-] (14) to (29);
        \draw[-] (14) to (30);

      \end{tikzpicture}
    \caption{}
  \end{subfigure}

    \caption{Smallest examples where a tree may (A) or may not (B) have exactly one child and the root has protection number $4$.}
    \label{fig:Phi0}
\end{figure}

In the case where vertices of outdegree $1$ can occur, the limiting distribution turns out to be a discrete double-exponential distribution that also occurs in many other combinatorial examples, and for which general results are available---see Section~\ref{sec:general_distributional}. These results are adapted in Section~\ref{sec:prowags-doubly-exp-restated} to the case where there are no vertices of outdegree $1$.

In the following results, we make a common technical assumption, stating formally that there is a positive real number $\tau$, less than the radius of convergence of $\Phi$, such that $\Phi(\tau) = \tau\Phi'(\tau)$ (see Section~\ref{sec:sg_trees_basic} for further details). This is equivalent to the offspring distribution of the associated Bienaym\'e--Galton--Watson process having a finite exponential moment, which is the case for all the examples mentioned earlier (plane trees, binary trees, pruned binary trees, labelled trees). This assumption is crucial for the analytic techniques that we are using, which are based on an asymptotic analysis of generating functions. However, it is quite likely that our main results remain valid under somewhat milder conditions.

\begin{maintheorem}
\label{thm:main_outdeg1_allowed}
Given a family of simply generated trees with $w_1 = \Phi'(0) \neq 0$, the proportion of trees of size $n$ whose maximum protection number is at most $h$ is asymptotically given by 
  \begin{equation*}
    \exp\big({-}\kappa n d^{-h}\big)(1 + o(1))
  \end{equation*}
  as $n \to \infty$ and $h = \log_d(n) + O(1)$, where $\kappa$ (given in~\eqref{eq:expontential-kappa}) and $d = (\rho\Phi'(0))^{-1} > 1$ are positive constants, with $\rho$ as defined in~\eqref{eq:fundamental-connection-between-greek-variables}. 
Moreover, the expected value of the maximum protection number in trees with $n$ vertices is
\begin{equation*}
\log_d(n) + \log_d(\kappa) + \frac{\gamma}{\log(d)} + \frac{1}{2} + \psi_d(\log_d(\kappa n)) + o(1),
\end{equation*}
  where $\gamma$ denotes the Euler--Mascheroni constant and $\psi_d$ is the $1$-periodic function that 
  is defined by the Fourier series 
  \begin{equation}\label{eq:psib}
    \psi_d(x) = -\frac{1}{\log(d)}\sum_{k \neq 0}\Gamma\Big({-}\frac{2k\pi i}{\log(d)}\Big)e^{2k\pi i x}.
  \end{equation}
\end{maintheorem}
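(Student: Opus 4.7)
The plan is to analyze the singular behavior of the generating function $M_h(x)$ of trees whose maximum protection number is at most $h$, and then apply the general distributional result of Prodinger and Wagner from Section~\ref{sec:general_distributional} to extract the limit law.

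First I would derive a functional equation for $M_h$. Since a tree has maximum protection number at most $h$ exactly when every subtree rooted at any of its vertices has root protection at most $h$, the natural decomposition leads to a chain of auxiliary generating functions $L_{h,k}(x)$ for trees with maximum protection number at most $h$ and root protection number at most $k$, satisfying $L_{h,0} = x w_0$, $M_h = L_{h,h}$, and
\begin{equation*}
  L_{h,k} = x w_0 + x\bigl[\Phi(M_h) - \Phi(M_h - L_{h,k-1})\bigr], \qquad 1 \le k \le h.
\end{equation*}
Introducing the differences $D_k := M_h - L_{h,k}$ (so that $D_{-1} = M_h$ and $D_h = 0$), this becomes the difference recursion $D_k = x[\Phi(D_{k-1}) - \Phi(D_{h-1})]$, which implicitly characterises $M_h$.

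Next I would analyze the singular behavior. The Taylor expansion $\Phi(D) = w_0 + w_1 D + O(D^2)$ shows that near $D = 0$ the linearization of the chain is driven by the multiplicative factor $x\Phi'(0) = x w_1$; at the critical point $x = \rho$ of $Y$ this equals $\rho\Phi'(0) = 1/d < 1$ (the strict inequality $d > 1$ coming from $\Phi'(\tau) > \Phi'(0)$ together with $\rho\Phi'(\tau) = 1$). A perturbative analysis of this chain then shows, as $h \to \infty$, that $M_h$ has a dominant singularity $\rho_h > \rho$ with
\begin{equation*}
  \rho_h - \rho \sim C\, d^{-h},
\end{equation*}
for an explicit $C > 0$, together with a square-root-type expansion $M_h(x) = \tau_h - c_h\sqrt{\rho_h - x}\,(1+o(1))$ near $\rho_h$ analogous to that of $Y$ near $\rho$. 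Singularity analysis together with the Prodinger--Wagner framework then converts this into
\begin{equation*}
  \frac{[x^n] M_h(x)}{[x^n] Y(x)} = \exp\bigl({-}\kappa n d^{-h}\bigr)\,(1 + o(1))
\end{equation*}
uniformly for $h = \log_d(n) + O(1)$, with $\kappa$ the explicit constant from~\eqref{eq:expontential-kappa} obtained by tracking the linearization.

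For the expected value, writing the mean as a sum of tail probabilities and substituting the previous estimate reduces it to $\sum_{h \ge 0}(1 - \exp(-\kappa n d^{-h})) + o(1)$. A Mellin transform analysis of this Abel-type sum, using that the Mellin transform of $1 - e^{-u}$ is $-\Gamma(s)$ on $-1 < \operatorname{Re}(s) < 0$ and summing residues at the double pole $s = 0$ (arising from $\Gamma(s)$ and from $1/(1-d^s)$) together with the simple poles $s = -2\pi i k/\log(d)$ for $k \in \mathbb{Z}\setminus\{0\}$ of $1/(1-d^s)$, yields the main term $\log_d(\kappa n) + \gamma/\log(d) + \tfrac{1}{2}$ plus the periodic fluctuation $\psi_d(\log_d(\kappa n))$ defined in~\eqref{eq:psib}; splitting $\log_d(\kappa n) = \log_d n + \log_d \kappa$ then gives the stated expansion. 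The most delicate part of the argument is the perturbative singular analysis: since $M_h$ is not given by a direct iteration of a single operator but by a chain whose length equals $h$, one must match a linearization valid for the small tail values $D_k$ (with $k$ close to $h$) against the non-small initial value $D_0 = M_h - x w_0$, and then lift the resulting fixed-point information to a singular expansion of $M_h$ precisely fitting the hypotheses of the Prodinger--Wagner theorem.
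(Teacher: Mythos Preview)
Your approach is essentially the same as the paper's. Your differences $D_k$ are exactly the paper's $Y_{h,k+1}$ (trees with maximum protection number at most $h$ and root protection at least $k+1$), and your recursion $D_k = x[\Phi(D_{k-1}) - \Phi(D_{h-1})]$ is the paper's equation~\eqref{eq:func-eq-2}; both routes then feed into the Prodinger--Wagner framework, and your Mellin step for the mean is precisely what is encapsulated in Theorem~\ref{thm:pro-wags-2}.

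The one part you rightly flag as ``most delicate'' --- the perturbative singular analysis of the length-$h$ chain --- is where essentially all of the work lies, and your sketch does not yet supply it. The paper devotes all of Section~\ref{sec:contraction} to this: it shows that the map $f_{x,z}(y)=x(\Phi(y)-\Phi(z))$ is a uniform contraction on a suitable polydisk, uses this to solve the chain for $Y_{h,h}$ in terms of $Y_{h,1}$ via an implicit function $q_h$ with $q_h = O(\lambda^h)$, reduces everything to a single implicit equation $Y_{h,1} = F_h(x,Y_{h,1})$ with $F_h \to F_\infty$ uniformly together with its derivatives, and then uses a multidimensional Rouch\'e argument plus careful compactness/continuation arguments to obtain a \emph{uniform} square-root expansion on a $\Delta$-domain independent of $h$ (Proposition~\ref{prop:analytic_properties_of_Yh}). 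The subsequent asymptotics $\rho_h - \rho \sim C\,d^{-h}$ are extracted in Section~\ref{sec:singularity-asymptotics-exp} from the vanishing of the Jacobian determinant~\eqref{eq:Jacdet} of the full system. Your ``match the linearized tail against the non-small initial value'' is the right intuition, but turning it into the uniform analytic input required by Theorem~\ref{thm:pro-wags-1} (uniform $\Delta$-domain, uniform error in the singular expansion, exponential convergence of the coefficients $a_h,b_h,c_h$) is the substantive content you still need to provide.
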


In the case where vertices of outdegree $1$ are excluded, we show that the
maximum protection number is strongly concentrated. In fact, with high
probability it only takes on one of at most two different values (depending on
the size of the tree). The precise result can be stated as follows.

\begin{maintheorem}
\label{thm:main_outdeg1_forbidden}
Given a family of simply generated trees with $w_1 = \Phi'(0) = 0$, set
$r = \min \{i \in \mathbb{N}\colon i \geq 2 \text{ and } w_i \neq 0\}$ and
$D=\gcd\{i\in \N \colon w_i\neq 0\}$. The
proportion of trees of size $n$ whose maximum protection number is at most $h$
is asymptotically given by
  \begin{equation*}
    \exp\big({-}\kappa n d^{-r^h}(1 + o(1)) + o(1)\big)
  \end{equation*}
  as $n \to \infty$, $n\equiv 1\pmod D$, and $h = \log_r{(\log_d(n))} + O(1)$, where $\kappa = \frac{w_r \lambda_1^r}{\Phi(\tau)}$ and $d = \mu^{-r} > 1$ are positive constants with $\lambda_1$ and $\mu$ defined in~\eqref{eq:lambda1} and~\eqref{eq:mu} respectively (see Lemma~\ref{lem:eta-asymp-double-exp}).
Moreover, there is a sequence of positive integers $h_n$ such that the maximum protection number of a tree with $n$ vertices is $h_n$ or $h_n+1$ with high probability (i.e., probability tending to $1$ as $n \to \infty$)
where $n\equiv 1\pmod D$.

Specifically, with $m_n = \log_r{\log_d{(n)}}$
  and $\{m_n\}$ denoting its fractional part, one can set
\begin{equation*}
h_n = \begin{cases} \lfloor m_n \rfloor & \text{if } \{m_n\} \leq \frac12, \\ \lceil m_n \rceil & \text{if } \{m_n\} > \frac12. \end{cases}
\end{equation*}
If we restrict to those values of $n$ for which $\{m_n\} \in [\varepsilon,1-\varepsilon]$, where $\varepsilon > 0$ is fixed, then with high probability $X_n$ is equal to $\lceil m_n \rceil$.
\end{maintheorem}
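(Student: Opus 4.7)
The plan is to analyse the generating function $B_h(x)$ of trees with maximum protection number at most $h$, show that it has its dominant singularity at some $\rho_h > \rho$ with $\rho_h - \rho \sim \kappa \rho\, d^{-r^h}$, and then invoke the distributional framework of Section~\ref{sec:prowags-doubly-exp-restated}. Since ``max protection $\leq h$'' is an every-vertex condition, I will need auxiliary generating functions $B_h^{(k)}(x)$ for trees with max protection $\leq h$ and root protection $\leq k$. Decomposing according to whether the root is a leaf or has $j\geq 2$ children (using $w_1=0$), at least one of which carries root protection $\leq h-1$, yields
\[
B_h(x) = w_0 x + x\bigl[\Phi(B_h(x)) - \Phi(D_h(x))\bigr],
\]
where $D_h = B_h - B_h^{(h-1)}$ captures trees in $B_h$ whose root has protection exactly $h$; a parallel recursion within $\{B_h^{(k)}\}$ closes the system.

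The key analytic step is to establish $D_h(\rho) \sim c\,\mu^{r^h}$. Because $w_1=0$ forces $\Phi(y)-w_0 = w_r y^r + O(y^{r+1})$, the iteration governing trees with root protection $\geq h$ has the form $a_{h+1}\approx \rho w_r a_h^{r}$, producing doubly exponential decay $a_h \sim \mu^{r^h}$; this is the content of Lemma~\ref{lem:eta-asymp-double-exp}. A comparison argument (any tree with root protection exactly $h$ but max protection exceeding $h$ must contain a subtree with root protection $\geq h+1$, contributing only $O(\mu^{r^{h+1}})$ at $\rho$) then transports this asymptotic from the pure-root class to $D_h$.

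With $D_h(x)$ in hand as an inhomogeneity, I next localise the singularity of $B_h$ via the implicit equation $y = w_0 x + x[\Phi(y) - \Phi(D_h(x))]$: the singular point $\rho_h$ is characterised by $\rho_h\Phi'(B_h(\rho_h)) = 1$. Perturbing around $(\rho,\tau)$ with $\rho\Phi'(\tau)=1$, the leading linear terms in the functional equation cancel and a quadratic balance forces $(\tau-B_h(\rho))^2 \sim \tfrac{2w_r}{\Phi''(\tau)}\, D_h(\rho)^r$; a further perturbation of the critical-point equation then gives $\rho_h-\rho \sim \kappa\rho\, d^{-r^h}$ with $d=\mu^{-r}$. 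Standard singularity analysis, together with the parallel square-root expansion of $Y$, transfers this to
\[
\frac{[x^n] B_h(x)}{[x^n] Y(x)} \sim (\rho/\rho_h)^n \sim \exp\!\bigl(-\kappa n\, d^{-r^h}\bigr)
\]
uniformly for $h=\log_r\log_d(n)+O(1)$, with the mod-$D$ periodicity handled by restricting to $n\equiv 1\pmod D$. The variant of Prodinger--Wagner in Section~\ref{sec:prowags-doubly-exp-restated} then upgrades this into the stated limit law.

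Concentration follows directly: with $m_n=\log_r\log_d n$, one has $nd^{-r^{\lfloor m_n\rfloor}} = n^{1-r^{-\{m_n\}}}\to\infty$ when $\{m_n\}$ is bounded away from $0$, whereas $nd^{-r^{\lceil m_n\rceil}} = n^{1-r^{1-\{m_n\}}}\to 0$ when $\{m_n\}$ is bounded away from $1$, so $\exp(-\kappa n d^{-r^h})$ switches from $0$ to $1$ as $h$ crosses $m_n$. This forces $X_n\in\{h_n,h_n+1\}$ with $h_n$ the nearest integer to $m_n$, and only $X_n=\lceil m_n\rceil$ survives in the regime $\{m_n\}\in[\epsilon,1-\epsilon]$. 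The main obstacle will be the singular analysis: controlling the coupled system $\{B_h^{(k)}\}$ uniformly in both $x$ near $\rho$ and $h\to\infty$, and justifying the quadratic balance with error estimates sharp enough for the transfer theorem to apply uniformly in the relevant range of $h$.
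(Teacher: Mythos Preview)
Your outline is essentially the paper's approach with a complementary parametrisation: your $B_h$ is the paper's $Y_{h,0}$, your $D_h$ is $Y_{h,h}$, your master equation $B_h = w_0x + x[\Phi(B_h)-\Phi(D_h)]$ is~\eqref{eq:etah0-etahh}, your iteration $a_{h+1}\approx \rho w_r a_h^{r}$ is the recursion for $\eta_k$ analysed in Lemma~\ref{lem:eta-asymp-double-exp}, and your concentration argument matches the proof of Theorem~\ref{thm:doubly-exponential-two-points} verbatim.

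The one place requiring care is the singularity characterisation. You write that $\rho_h$ is determined by $\rho_h\Phi'(B_h(\rho_h))=1$, treating $D_h$ as a fixed analytic input to the implicit equation for $B_h$. But $D_h$ is coupled to $B_h$ through the full system $\{B_h^{(k)}\}$ and shares the same square-root singularity at $\rho_h$, so the singularity is really governed by the vanishing of the Jacobian of the whole system (the paper's~\eqref{eq:Jacdet}). In the $w_1=0$ regime the extra Jacobian terms are $O(B_1^{r^h})$ (Lemma~\ref{lem:doubly-exp-sum-product}), so your naive condition is correct to leading order---indeed the correction affects $\eta_{h,0}$ but cancels in the leading term of $\rho_h$---but this must be proved. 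The paper's route is to collapse the system to a single equation $Y_{h,1}=F_h(x,Y_{h,1})$ by showing $Y_{h,h}=q_h(x,Y_{h,1})$ for an analytic $q_h=O(\lambda^h)$ via a contraction argument (Section~\ref{sec:contraction}); this simultaneously justifies the singularity condition and supplies the uniform-in-$h$ square-root expansion (Proposition~\ref{prop:analytic_properties_of_Yh}) that you correctly flag as the main obstacle.
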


Note that in the setting of Theorem~\ref{thm:main_outdeg1_forbidden}, it is easy to see that there
are no trees of size $n$ if $n\not\equiv 1\pmod D$. In the setting of Theorem~\ref{thm:main_outdeg1_allowed},
we have $\gcd\{i\in\N\colon w_i\neq 0\}=1$ because $w_1\neq 0$.
Theorem~\ref{thm:main_outdeg1_allowed} is illustrated in Figure~\ref{fig:cdf-sinle-exp}, while Theorem~\ref{thm:main_outdeg1_forbidden} is illustrated in Figure~\ref{fig:cdf-double-exp}.

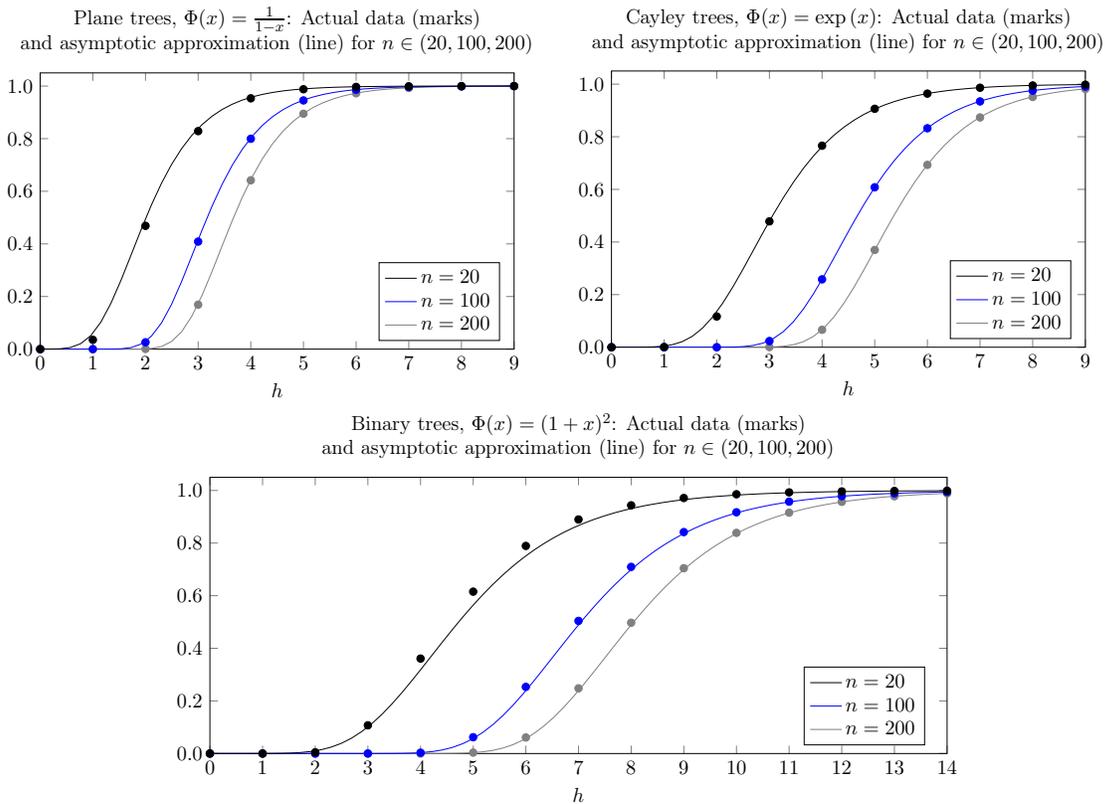
\begin{figure}[ht]
  \centering
  \begin{minipage}{0.48\textwidth}
    \begin{tikzpicture}[scale = 0.7]
      \begin{axis}[
        title style = {align=center},
        title = {Plane trees, $\Phi(x) = \frac{1}{1-x}$: Actual data (marks)\\ and asymptotic approximation (line) for $n \in (20, 100, 200)$},
        xlabel = $h$,
        xtick distance = {1},
        ytick = {0, 0.2, 0.4, 0.6, 0.8, 1},
        xmin = 0,
        xmax = 9,
        ymin = 0,
        ymax = 1.05,
        reverse legend,
        y tick label style = {/pgf/number format/.cd, fixed, fixed zerofill, precision=1, /tikz/.cd},
        x = 1cm,
        y = 5cm,
        legend pos = south east,
        legend cell align = {left},
        tick label style={/pgf/number format/1000 sep=}]
        \addplot [only marks, gray, mark = *, forget plot] coordinates{(0, 0.000000000000000)
        (1, 1.10563373688970e-14)
        (2, 0.000704327730603947)
        (3, 0.169047489664232)
        (4, 0.641811509808680)
        (5, 0.894883111927821)
        (6, 0.972549734007141)
        (7, 0.993047925987236)
        (8, 0.998252951545954)
        (9, 0.999561824191278)
        (10, 0.999890154303133)};
        \addplot [color=gray, domain = 0:19, samples = 100] {exp(-9/16*200*(0.25)^x)};\addlegendentry{$n = 200$}
        \addplot [only marks, blue, mark = *, forget plot] coordinates{(0, 0.000000000000000)
        (1, 9.79013389607174e-8)
        (2, 0.0260649999064339)
        (3, 0.408814480198590)
        (4, 0.799621796734508)
        (5, 0.945408900428163)
        (6, 0.985994088637177)
        (7, 0.996461724454492)
        (8, 0.999109599350156)
        (9, 0.999776143026773)
        (10, 0.999943730771189)
        (11, 0.999985856113744)
        (12, 0.999996444626615)
        (13, 0.999999106229246)
        (14, 0.999999775304990)
        (15, 0.999999943507860)
        (16, 0.999999985796001)
        (17, 0.999999996428407)
        (18, 0.999999999101863)
        (19, 0.999999999774133)};
        \addplot [color=blue, domain = 0:19, samples = 100] {exp(-9/16*100*(0.25)^x)};\addlegendentry{$n = 100$}
        \addplot [only marks, black, mark = *, forget plot] coordinates{(0, 0.000000000000000)
        (1, 0.0353799648910158)
        (2, 0.468658690451020)
        (3, 0.828688679815536)
        (4, 0.953368747004187)
        (5, 0.987843763384219)
        (6, 0.996862069449969)
        (7, 0.999190972794494)
        (8, 0.999791178114919)
        (9, 0.999945996725027)
        (10, 0.999986002084953)
        (11, 0.999996361888803)
        (12, 0.999999051510393)
        (13, 0.999999751832706)
        (14, 0.999999934797052)
        (15, 0.999999982785203)
        (16, 0.999999995429700)
        (17, 0.999999998781253)
        (18, 0.999999999695313)
        (19, 0.999999999847657)};
        \addplot [color=black, domain = 0:19, samples = 100] {exp(-9/16*20*(0.25)^x)};\addlegendentry{$n = 20$}
      \end{axis}
    \end{tikzpicture}
  \end{minipage}
  \begin{minipage}{0.48\textwidth}
    \begin{tikzpicture}[scale = 0.7]
      \begin{axis}[
        title style = {align=center},
        title = {Cayley trees, $\Phi(x) = \exp{(x)}$: Actual data (marks)\\ and asymptotic approximation (line) for $n \in (20, 100, 200)$},
        xlabel = $h$,
        xtick distance = {1},
        ytick = {0, 0.2, 0.4, 0.6, 0.8, 1},
        xmin = 0,
        xmax = 9,
        ymin = 0,
        ymax = 1.05,
        reverse legend,
        y tick label style = {/pgf/number format/.cd, fixed, fixed zerofill, precision=1, /tikz/.cd},
        x = 1cm,
        y = 5cm,
        legend pos = south east,
        legend cell align = {left},
        tick label style={/pgf/number format/1000 sep=}]
        \addplot [only marks, gray, mark = *, forget plot] coordinates{(0, 0.000000000000000)
        (1, 3.72352063844795e-32)
        (2, 3.48479992203085e-10)
        (3, 0.000544150205240558)
        (4, 0.0662736905151446)
        (5, 0.369669710875428)
        (6, 0.693075650202564)
        (7, 0.873525185764437)
        (8, 0.951344012432970)
        (9, 0.981769679950722)
        };
        \addplot [color=gray, domain = 0:19, samples = 100] {exp(-3.1789/exp(1)*(1-1/exp(1))*200*(1/exp(1))^x)};\addlegendentry{$n = 200$}
        \addplot [only marks, blue, mark = *, forget plot] coordinates{(0, 0.000000000000000)
        (1, 1.97925054797167e-16)
        (2, 0.0000189797575202735)
        (3, 0.0234932125719957)
        (4, 0.257959890799478)
        (5, 0.608113581959725)
        (6, 0.832306634557732)
        (7, 0.934409897706187)
        (8, 0.975229840254062)
        (9, 0.990768817275556)};
        \addplot [color=blue, domain = 0:19, samples = 100] {exp(-3.1789/exp(1)*(1-1/exp(1))*100*(1/exp(1))^x)};\addlegendentry{$n = 100$}
        \addplot [only marks, black, mark = *, forget plot] coordinates{(0, 0.000000000000000)
        (1, 0.000750801302107187)
        (2, 0.116826225117928)
        (3, 0.478374806124828)
        (4, 0.766093876694863)
        (5, 0.906339686765497)
        (6, 0.964027522371637)
        (7, 0.986385609971179)
        (8, 0.994872183438988)
        (9, 0.998070658196885)};
        \addplot [color=black, domain = 0:19, samples = 100] {exp(-3.1789/exp(1)*(1-1/exp(1))*20*(1/exp(1))^x)};\addlegendentry{$n = 20$}      
      \end{axis}
    \end{tikzpicture}
  \end{minipage}

  \begin{tikzpicture}[scale = 0.7]
    \begin{axis}[
      title style = {align=center},
      title = {Binary trees, $\Phi(x) = (1+x)^2$: Actual data (marks)\\ and asymptotic approximation (line) for $n \in (20, 100, 200)$},
      xlabel = $h$,
      xtick distance = {1},
      ytick = {0, 0.2, 0.4, 0.6, 0.8, 1},
      xmin = 0,
      xmax = 14,
      ymin = 0,
      ymax = 1.05,
      reverse legend,
      y tick label style = {/pgf/number format/.cd, fixed, fixed zerofill, precision=1, /tikz/.cd},
      x = 1cm,
      y = 5cm,
      legend pos = south east,
      legend cell align = {left},
      tick label style={/pgf/number format/1000 sep=}]
      \addplot [only marks, gray, mark = *, forget plot] coordinates{(0, 0.000000000000000)
      (1, 3.11677700934574e-88)
      (2, 1.78439182596027e-26)
      (3, 1.26679931480945e-11)
      (4, 8.57483037854609e-6)
      (5, 0.00349933272629547)
      (6, 0.0610595763539754)
      (7, 0.247854466365885)
      (8, 0.497282641317521)
      (9, 0.704455184137879)
      (10, 0.838845886516252)
      (11, 0.915635988461520)
      (12, 0.956766290066581)
      (13, 0.978085232351888)
      (14, 0.988953286770460)
      (15, 0.994447289761476)
      (16, 0.997212853871863)};
      \addplot [color=gray, domain = 0:19, samples = 100] {exp(-3.664/2*1/2*200*(0.5)^x)};\addlegendentry{$n = 200$}
      \addplot [only marks, blue, mark = *, forget plot] coordinates{(0, 0) 
      (1, 1.59325008156968e-43)
      (2, 1.90890239696768e-13)
      (3, 4.21096559837611e-6)
      (4, 0.00319933808219555)
      (5, 0.0620116374151269)
      (6, 0.253269859523321)
      (7, 0.504122336074173)
      (8, 0.709543000957086)
      (9, 0.841793671899265)
      (10, 0.917138613926652)
      (11, 0.957479766447915)
      (12, 0.978409107342060)
      (13, 0.989095201547303)
      (14, 0.994507282950451)
      (15, 0.997237089176211)
      (16, 0.998611148568838)};
      \addplot [color=blue, domain = 0:19, samples = 100] {exp(-3.664/2*1/2*100*(0.5)^x)};\addlegendentry{$n = 100$}
      \addplot [only marks, black, mark = *, forget plot] coordinates{(0, 0) (1, 2.09267723425672e-8)
      (2, 0.00473015355817857)
      (3, 0.107319597299155)
      (4, 0.360944509139098)
      (5, 0.615340184413633)
      (6, 0.789146729585558)
      (7, 0.889700743076416)
      (8, 0.943568151574927)
      (9, 0.971356982598647)
      (10, 0.985613262549932)
      (11, 0.992784197775015)
      (12, 0.996386867194422)
      (13, 0.998195609981534)
      (14, 0.999101320688521)
      (15, 0.999554678284550)
      (16, 0.999780352597492)
      };
      \addplot [color=black, domain = 0:19, samples = 100] {exp(-3.664/2*1/2*20*(0.5)^x)};\addlegendentry{$n = 20$}
    \end{axis}
  \end{tikzpicture}

  \caption{The asymptotic cumulative distribution function plotted against calculated values for plane, binary, and Cayley trees.}
  \label{fig:cdf-sinle-exp}
\end{figure}

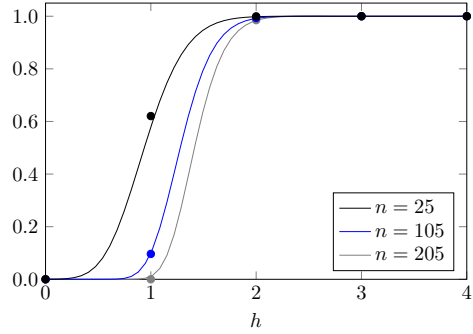
\begin{figure}[ht]
  \centering
  \begin{minipage}{0.48\textwidth}
    \begin{tikzpicture}[scale = 0.7]
      \begin{axis}[
        title style = {align=center},
        title = {Complete binary trees, $\Phi(x) = 1 + x^2$: Actual data (marks)\\ and asymptotic approximation (line) for $n \in (25, 105, 205)$},
        xlabel = $h$,
        xtick distance = {1},
        ytick = {0, 0.2, 0.4, 0.6, 0.8, 1},
        xmin = 0,
        xmax = 4,
        ymin = 0,
        ymax = 1.05,
        reverse legend,
        y tick label style = {/pgf/number format/.cd, fixed, fixed zerofill, precision=1, /tikz/.cd},
        x = 2cm,
        y = 5cm,
        legend pos = south east,
        legend cell align = {left},
        tick label style={/pgf/number format/1000 sep=}]
        \addplot [only marks, gray, mark = *, forget plot] coordinates{(0, 0.000000000000000)
        (1, 1.82640837241991e-28)
        (2, 0.231926342113497)
        (3, 1)
        (4, 1)};
        \addplot [color=gray, domain = 0:4, samples = 60] {exp(-2*205*(0.25)^(2^x))};\addlegendentry{$n = 205$}
        \addplot [only marks, blue, mark = *, forget plot] coordinates{(0, 0.000000000000000)
        (1, 7.54524917781116e-14)
        (2, 0.524373436833892)
        (3, 1)
        (4, 1)};
        \addplot [color=blue, domain = 0:4, samples = 60] {exp(-2*105*(0.25)^(2^x))};\addlegendentry{$n = 105$}
        \addplot [only marks, black, mark = *, forget plot] coordinates{(0, 0.000000000000000)
        (1, 0.00984558583158664)
        (2, 0.958944676268677)
        (3, 1)
        (4, 1)};
        \addplot [color=black, domain = 0:4, samples = 60] {exp(-2*25*(0.25)^(2^x))};\addlegendentry{$n = 25$}
      \end{axis}
    \end{tikzpicture}
  \end{minipage}
  \begin{minipage}{0.48\textwidth}
    \begin{tikzpicture}[scale = 0.7]
      \begin{axis}[
        title style = {align=center},
        title = {Riordan trees, $\Phi(x) = \frac{1}{1-x}-x$: Actual data (marks)\\ and asymptotic approximation (line) for $n \in (25, 105, 205)$},
        xlabel = $h$,
        xtick distance = {1},
        ytick = {0, 0.2, 0.4, 0.6, 0.8, 1},
        xmin = 0,
        xmax = 4,
        ymin = 0,
        ymax = 1.05,
        reverse legend,
        y tick label style = {/pgf/number format/.cd, fixed, fixed zerofill, precision=1, /tikz/.cd},
        x = 2cm,
        y = 5cm,
        legend pos = south east,
        legend cell align = {left},
        tick label style={/pgf/number format/1000 sep=}]
        \addplot [only marks, gray, mark = *, forget plot] coordinates{(0, 0.000000000000000)
        (1, 9.46076533809873e-8)
        (2, 0.985129537940785)
        (3, 0.999999870952963)
        (4, 1)
        };
        \addplot [color=gray, domain = 0:4, samples = 60] {exp(-6*205*(0.0603722)^(2^x))};\addlegendentry{$n = 205$}
        \addplot [only marks, blue, mark = *, forget plot] coordinates{(0, 0.000000000000000)
        (1, 0.0964863390158023)
        (2, 0.993266921217424)
        (3, 0.999999966602622)
        (4, 1)};
        \addplot [color=blue, domain = 0:4, samples = 60] {exp(-6*105*(0.0603722)^(2^x))};\addlegendentry{$n = 105$}
        \addplot [only marks, black, mark = *, forget plot] coordinates{(0, 0.000000000000000)
        (1, 0.620488536882679)
        (2, 0.999502293779699)
        (3, 1)
        (4, 1)};
        \addplot [color=black, domain = 0:4, samples = 60] {exp(-6*25*(0.0603722)^(2^x))};\addlegendentry{$n = 25$}      
      \end{axis}
    \end{tikzpicture}
  \end{minipage}

  \caption{The asymptotic cumulative distribution function plotted against calculated values for complete binary, and 
  Riordan trees~\cite{kim-stanley:2016:hex-trees}.}
  \label{fig:cdf-double-exp}
\end{figure}

The proof of Theorem~\ref{thm:main_outdeg1_allowed} relies on a a general distributional result provided in
\cite{Prodinger-Wagner:ta:boots}, see Theorem~\ref{thm:pro-wags-1}. For the
proof of  Theorem~\ref{thm:main_outdeg1_forbidden}, however, we will need a
variant for doubly-exponential convergence of the dominant singularities.
The statement and proof are similar to the original and we expect that this
variant will be useful in other contexts, too.

\begin{maintheorem}\label{thm:pro-wags-doubly-1}
  Let $Y_h(x) = \sum_{n \geq 0} y_{h, n}x^n$ ($h \geq 0$) be a sequence of generating functions with nonnegative coefficients 
  such that $y_{h, n}$ is nondecreasing in $h$ and (coefficientwise)
  \begin{equation*}
    \lim_{h \to \infty} Y_h(x) = Y(x) = \sum_{n \geq 0}y_n x^n, 
  \end{equation*}
  and let $X_n$ denote the sequence of random variables with support $\mathbb{N}_0$ defined by 
  \begin{equation*}
    \mathbb{P}(X_n \leq h) = \frac{y_{h, n}}{y_n}.
  \end{equation*}
  Assume that each generating function $Y_h$ has a singularity at $\rho_h \in \mathbb{R}$ such that 
  \begin{enumerate}
    \item \label{bullet:thm-doubly-1-1}$\rho_h = \rho(1+ \kappa \zeta^{r^{h}} + o(\zeta^{r^{h}}))$ as $h \to \infty$ for some constants 
    $\rho > 0$, $\kappa > 0$, $\zeta \in (0, 1)$, and $r > 1$.
    \item \label{bullet:thm-doubly-1-2}$Y_h(x)$ can be continued analytically to the domain 
    \begin{equation*}
      \{x \in \mathbb{C} : \abs{x} \geq (1+\delta)\abs{\rho_h}, \abs{\Arg(x/\rho_h - 1)} > \phi\}
    \end{equation*}
    for some fixed $\delta >0$ and $\phi \in (0, \pi/2)$, and 
    \begin{equation*}
      Y_h(x) = U_h(x) + A_h(1-x/\rho_h)^\alpha + o((1 - x/\rho_h)^\alpha)
    \end{equation*}
    holds within this domain, uniformly in $h$, where $U_h(x)$ is analytic and uniformly bounded in $h$ within the aforementioned region, 
    $\alpha \in \mathbb{R} \setminus \mathbb{N}_0$, and $A_h$ is a constant dependent on $h$ such that $\lim_{h \to \infty}A_h = A\neq 0$. Finally, 
    \begin{equation*}
      Y(x) = U(x) + A (1-x/\rho)^\alpha + o((1-x/\rho)^\alpha)
    \end{equation*}
    in the region
    \begin{equation*}
      \{x \in \mathbb{C} : \abs{x} \geq (1+\delta)\abs{\rho}, \abs{\Arg(x/\rho - 1)} > \phi\}
    \end{equation*}
    for a function $U(x)$ that is analytic within this region. 
  \end{enumerate}
  Then the asymptotic formula
  \begin{equation*}
  \mathbb{P}(X_n \leq h) = \frac{y_{h, n}}{y_n} = \exp{\big({-}\kappa n\zeta^{r^h}(1+o(1)) + o(1)\big)}
  \end{equation*}
  holds as $n \to \infty$ and $h = \log_r{(\log_d{(n)})} + O(1)$, where $d = \zeta^{-1}$.
\end{maintheorem}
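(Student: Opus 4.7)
The plan is to mirror the structure of the original Prodinger--Wagner result (Theorem~\ref{thm:pro-wags-1}), replacing the exponential scaling $\rho_h=\rho(1+\kappa\zeta^h+o(\zeta^h))$ by the doubly-exponential scaling $\rho_h=\rho(1+\kappa\zeta^{r^h}+o(\zeta^{r^h}))$ and then carrying out the analogous singularity analysis uniformly in $h$. The key observation is that in the regime $h=\log_r(\log_d n)+O(1)$ one has $r^h=\Theta(\log n)$, so $\zeta^{r^h}=d^{-r^h}\to 0$ while $n\zeta^{r^h}$ may tend to zero, be bounded, or tend to infinity depending on the precise value of $h$.

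First I would apply the Flajolet--Odlyzko transfer theorem simultaneously to every $Y_h$, using hypothesis~(\ref{bullet:thm-doubly-1-2}): the $\Delta$-like domain (scaled to $\rho_h$) on which $Y_h$ is analytic, the uniform boundedness of $U_h$, and the singular expansion $A_h(1-x/\rho_h)^{\alpha}+o((1-x/\rho_h)^{\alpha})$ with error uniform in $h$. An essentially verbatim adaptation of the corresponding argument in~\cite{Prodinger-Wagner:ta:boots} should then yield
\begin{equation*}
y_{h,n} = \frac{A_h}{\Gamma(-\alpha)}\,\rho_h^{-n}\,n^{-\alpha-1}(1+o(1))
\end{equation*}
uniformly in $h$, together with the analogous asymptotic $y_n = \frac{A}{\Gamma(-\alpha)}\rho^{-n}n^{-\alpha-1}(1+o(1))$ obtained from the corresponding expansion of $Y(x)$ in the region provided by hypothesis~(\ref{bullet:thm-doubly-1-2}).

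Forming the ratio and using $A_h\to A$ gives
\begin{equation*}
\frac{y_{h,n}}{y_n}=\Bigl(\frac{\rho}{\rho_h}\Bigr)^n(1+o(1)).
\end{equation*}
Substituting $\rho_h=\rho(1+\kappa\zeta^{r^h}+o(\zeta^{r^h}))$ and taking logarithms,
\begin{equation*}
n\log\frac{\rho}{\rho_h}=-n\log\bigl(1+\kappa\zeta^{r^h}+o(\zeta^{r^h})\bigr)=-\kappa n\zeta^{r^h}(1+o(1)),
\end{equation*}
where the Taylor expansion of $\log(1+\cdot)$ is justified precisely because $\zeta^{r^h}\to 0$. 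Exponentiating and absorbing the multiplicative $(1+o(1))$ prefactor into an additive $o(1)$ in the exponent yields the claimed formula.

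The main obstacle is securing the uniformity of the singularity analysis in $h$, i.e.\ ensuring that the implicit constants in the transfer theorem do not blow up as $h\to\infty$. The ingredients for this are built into hypothesis~(\ref{bullet:thm-doubly-1-2}): the $\Delta$-domains can be taken to be rescalings of a common one, $U_h$ is uniformly bounded, and the error in the singular expansion is $o((1-x/\rho_h)^\alpha)$ uniformly in $h$. Once uniformity is in place, the doubly-exponential scaling enters only at the very last step, through the elementary Taylor expansion of $\rho/\rho_h$; the remainder of the argument parallels the original Prodinger--Wagner proof with $\zeta^h$ replaced throughout by $\zeta^{r^h}$.
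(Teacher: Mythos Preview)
Your proposal is correct and follows essentially the same approach as the paper: uniform singularity analysis gives $y_{h,n}=\frac{A_h}{\Gamma(-\alpha)}\rho_h^{-n}n^{-\alpha-1}(1+o(1))$ and the analogous formula for $y_n$, the ratio becomes $(\rho/\rho_h)^n(1+o(1))$ using $A_h\to A$, and a Taylor expansion of $\log(\rho_h/\rho)$ finishes. The paper's proof is in fact slightly terser than your outline, but the content is identical.
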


Note that here we have $\rho_h = \rho(1+ \kappa \zeta^{r^{h}} + o(\zeta^{r^{h}}))$, while in 
Theorem~\ref{thm:pro-wags-1} we have the exponential case $\rho_h = \rho (1+ \kappa \zeta^h + o(\zeta^h))$.

In the next theorem, we show that the consequences of this distributional result are quite drastic.

\begin{maintheorem}\label{thm:doubly-exponential-two-points}
  Assume the conditions of Theorem~\ref{thm:pro-wags-doubly-1}. There is a
  sequence of nonnegative integers $h_n$ such that $X_n$ is equal to $h_n$ or
  $h_n+1$ with high probability. Specifically, with $m_n = \log_r{\log_d{(n)}}$
  and $\{m_n\}$ denoting its fractional part, one can set
\begin{equation*}
h_n = \begin{cases} \lfloor m_n \rfloor & \text{if } \{m_n\} \leq \frac12, \\ \lceil m_n \rceil & \text{if } \{m_n\} > \frac12. \end{cases}
\end{equation*}
If we restrict to those values of $n$ for which $\{m_n\} \in [\varepsilon,1-\varepsilon]$, where $\varepsilon > 0$ is fixed, then with high probability $X_n$ is equal to $\lceil m_n \rceil$.
\end{maintheorem}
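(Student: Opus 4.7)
The plan is to apply Theorem~\ref{thm:pro-wags-doubly-1} to obtain
\[
\mathbb{P}(X_n \leq h) = \exp\bigl({-}\kappa n \zeta^{r^h}(1+o(1)) + o(1)\bigr)
\]
for $h = m_n + O(1)$, and to analyze the exponent as $h$ varies near $m_n := \log_r \log_d n$. Writing $L := \log_d n = r^{m_n}$, I would rewrite
\[
n \zeta^{r^h} = d^{L - r^h} = d^{L\bigl(1 - r^{h - m_n}\bigr)}.
\]
Since $L \to \infty$, this quantity tends to $0$ whenever $h - m_n \geq c$ for any fixed $c > 0$, and to $+\infty$ whenever $h - m_n \leq -c$. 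Substituting into the display above yields the key dichotomy: $\mathbb{P}(X_n \leq h) \to 1$ if $h - m_n \geq c$, and $\mathbb{P}(X_n \leq h) \to 0$ if $h - m_n \leq -c$.

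With the prescribed $h_n$ I would show that both relevant gaps are bounded below by $\tfrac12$. Indeed, if $\{m_n\} \leq \tfrac12$ then $h_n = \lfloor m_n \rfloor$, so $(h_n + 1) - m_n = 1 - \{m_n\} \geq \tfrac12$ and $m_n - (h_n - 1) = 1 + \{m_n\} \geq 1$; if $\{m_n\} > \tfrac12$ then $h_n = \lceil m_n \rceil$, so $(h_n + 1) - m_n = 2 - \{m_n\} \geq 1$ and $m_n - (h_n - 1) = \{m_n\} > \tfrac12$. Applying the dichotomy with $c = \tfrac12$ yields $\mathbb{P}(X_n \leq h_n + 1) \to 1$ and $\mathbb{P}(X_n \leq h_n - 1) \to 0$, so
\[
\mathbb{P}\bigl(X_n \in \{h_n,\, h_n + 1\}\bigr) \to 1.
\]
For the refined statement under the restriction $\{m_n\} \in [\epsilon, 1-\epsilon]$, the same argument with $c = \epsilon$ applied to $h = \lceil m_n \rceil$ gives upper gap $1 - \{m_n\} \geq \epsilon$ and, for $h - 1$, lower gap $\{m_n\} \geq \epsilon$, so $X_n = \lceil m_n \rceil$ with high probability.

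There is no serious obstacle here: the argument is essentially algebraic once Theorem~\ref{thm:pro-wags-doubly-1} is in hand. The only points worth verifying are that the values $h_n \pm 1$ lie within the asymptotic range $m_n + O(1)$ in which Theorem~\ref{thm:pro-wags-doubly-1} applies (immediate from their definition), and that the case of $h$ outside this range is handled via the monotonicity of $h \mapsto \mathbb{P}(X_n \leq h)$, which reduces the claim to the two boundary values $h_n \pm 1$. The phenomenon being exploited is that the double-exponential dependence $\zeta^{r^h}$ produces an extraordinarily sharp transition: a change of just $\tfrac12$ in $h$ already alters the exponent of $\zeta$ by the multiplicative factor $r^{1/2} > 1$, which after composition with $L \to \infty$ and with $\exp$ suffices to push $\mathbb{P}(X_n \leq h)$ from $0$ all the way to $1$.
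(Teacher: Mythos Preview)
Your proposal is correct and follows essentially the same approach as the paper's proof: both apply Theorem~\ref{thm:pro-wags-doubly-1} and analyse the exponent $n\zeta^{r^h}$ (which the paper writes as $n^{1-r^{\pm\epsilon}}$, your $d^{L(1-r^{h-m_n})}$ being the same expression) to obtain the dichotomy that $\mathbb{P}(X_n\le h)\to 1$ or $0$ according as $h-m_n$ stays bounded away from $0$ on the positive or negative side. Your presentation is in fact a bit more explicit than the paper's, which simply fixes an $\epsilon$, handles the upper and lower tails separately, and then says ``the statement follows by combining the two parts''; your direct verification with $c=\tfrac12$ of the gaps $(h_n+1)-m_n\ge\tfrac12$ and $m_n-(h_n-1)\ge\tfrac12$ makes the combination step transparent.
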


\section{Preliminaries}

\subsection{Basic facts about simply generated trees}\label{sec:sg_trees_basic}

For our purposes, we will make the following typical technical assumptions:
first, we assume without loss of generality that $w_0 = 1$ or equivalently
$\Phi(0) = 1$. In other words, leaves have an associated weight of $1$, which
can be achieved by means of a normalising factor if necessary. Moreover, to
avoid trivial cases in which the only possible trees are paths, we assume that
$w_j > 0$ for at least one $j \geq 2$. Finally, we assume that there is a
positive real number $\tau$, less than the radius of convergence of $\Phi$,
such that $\Phi(\tau) = \tau\Phi'(\tau)$. As mentioned earlier, this is
equivalent to the offspring distribution having exponential moments.

It is well known (see e.g.~\cite[Section 3.1.4]{Drmota:2009:random}) that if such a $\tau$ exists, it is unique, and the radius of convergence $\rho$ of $Y$ can be expressed as
\begin{equation}\label{eq:fundamental-connection-between-greek-variables}
    \rho = \tau/\Phi(\tau) = 1/\Phi'(\tau),
\end{equation}
which is equivalent to $\rho$ and $\tau$ satisfying the simultaneous equations $y = x \Phi(y)$ and $1 = x \Phi'(y)$ (which essentially mean that the implicit function theorem fails at the point $(\rho,\tau)$). 
Moreover, $Y$ has a square root singularity at $\rho$ with $\tau = Y(\rho)$, with a singular expansion of the form
\begin{equation}\label{eq:singular_expansion_Y}
Y(x) = \tau + a \Bigl(1-\frac{x}{\rho}\Bigr)^{1/2} + b\Bigl(1-\frac{x}{\rho}\Bigr) + c \Bigl(1-\frac{x}{\rho}\Bigr)^{3/2} + O\Bigl((\rho-x)^2\Bigr).
\end{equation}
The coefficients $a,b,c$ can be expressed in terms of $\Phi$ and $\tau$. In particular, we have
\begin{equation*}
a = - \Big( \frac{2\Phi(\tau)}{\Phi''(\tau)} \Big)^{1/2}.
\end{equation*}
In fact, there is a full Newton--Puiseux expansion in powers of
$(1-x/\rho)^{1/2}$. If the weight sequence is \emph{aperiodic}, i.\,e.,
$\gcd \{j \colon w_j \neq 0\} = 1$, then $\rho$ is the only singularity on the
circle of convergence of $Y$, and for sufficiently small $\varepsilon > 0$
there are no solutions to the simultaneous equations $y = x \Phi(y)$ and
$1 = x \Phi'(y)$ with $\abs{x} \leq \rho + \varepsilon$ and
$\abs{y} \leq \tau + \varepsilon$ other than $(x,y) = (\rho,\tau)$. Otherwise,
if this $\gcd$ is equal to $D$, there are $D$ singularities at
$\rho e^{2k\pi i}{D}$ ($i \in \{0,1,\ldots,D-1\}$), all with the same singular
behaviour. In the following, we assume for technical simplicity that the weight
sequence is indeed aperiodic, but the proofs are readily adapted to the
periodic setting, see Remarks~\ref{remark:periodic-case-1} and~\ref{remark:periodic-case-2}.

By means of singularity analysis \cite[Chapter VI]{Flajolet-Sedgewick:ta:analy}, the singular expansion~\eqref{eq:singular_expansion_Y} yields an asymptotic formula for the coefficients of $Y$: we have
\begin{equation*}
y_n = [x^n] Y(x) \sim \frac{-a}{2\sqrt{\pi}} n^{-3/2} \rho^{-n}.
\end{equation*}
If the weight sequence corresponds to a probability distribution, then $y_n$ is
the probability that an \emph{unconditioned} Bienaym\'e--Galton--Watson tree
has exactly $n$ vertices when the process ends. For other classes such as plane
trees or binary trees, $y_n$ represents the number of $n$-vertex trees in the
respective class.

\subsection{A general distributional result}\label{sec:general_distributional}

The discrete double-exponential distribution in
Theorem~\ref{thm:main_outdeg1_allowed} has been observed in many other
combinatorial instances, for example the longest run of zeros in a random
$0$-$1$-string, the longest horizontal segment in Motzkin paths or the maximum
outdegree in plane trees. This can often be traced back to the behaviour of the
singularities of associated generating functions. The following general 
result~\cite{Prodinger-Wagner:ta:boots}, similar to 
Theorem~\ref{thm:pro-wags-doubly-1} but with an exponential instead of doubly-exponential 
rate of convergence of the dominant singularity, will be a key tool for us.

\begin{theorem}[{see~\cite[Theorem 1]{Prodinger-Wagner:ta:boots}}]\label{thm:pro-wags-1}
Let $Y_h(x) = \sum_{n \geq 0}y_{h, n}x^n$ $(h \geq 0)$ be a sequence of generating functions with nonnegative coefficients 
such that $y_{h, n}$ is nondecreasing in $h$ and (coefficientwise)
\begin{equation*}
  \lim_{h \to \infty} Y_h(x) = Y(x) = \sum_{n \geq 0}y_n x^n,
\end{equation*}
and let $X_n$ denote the sequence of random variables with support $\N_0$ defined by 
\begin{equation}\label{eq:definition-of-Xn}
  \mathbb{P}(X_n \leq h) = \frac{y_{h, n}}{y_n}.
\end{equation}
Assume, moreover, that each generating function $Y_h$ has a singularity $\rho_h \in \R$, such that 
\begin{enumerate}
  \item $\rho_h = \rho (1+ \kappa \zeta^h + o(\zeta^h))$ as $h \to \infty$ for some constants $\rho > 0$, $\kappa > 0$ and $\zeta \in (0, 1)$. \label{bullet:thm-1-1}
  \item $Y_h(x)$ can be continued analytically to the domain \label{bullet:thm-1-2}
  \begin{equation}\label{eq:delta-domain}
    \{x \in \C: \abs{x} \leq (1 + \delta)\abs{\rho_h}, \abs{\Arg(x/\rho_h - 1)} > \phi\}
  \end{equation}
  for some fixed $\delta > 0$ and $\phi \in (0, \pi/2)$, and 
  \begin{equation*}
    Y_h(x) = U_h(x) + A_h(1-x/\rho_h)^\alpha + o((1-x/\rho_h)^\alpha)
  \end{equation*}
  holds within this domain, uniformly in $h$, where $U_h(x)$ is analytic and uniformly bounded in $h$ within the aforementioned region, 
  $\alpha \in \R \setminus \N_0$, and $A_h$ is a constant depending on $h$ such that $\lim_{h \to \infty}A_h = A\neq 0$. Finally, 
  \begin{equation*}
    Y(x) = U(x) + A(1-x/\rho)^\alpha + o((1-x/\rho)^\alpha)
  \end{equation*}
  in the region 
  \begin{equation*}
    \{x \in \C: \abs{x} \leq (1 + \delta)\abs{\rho}, \abs{\Arg(x/\rho - 1)} > \phi\}
  \end{equation*}
  for a function $U(x)$ that is analytic within this region. 
\end{enumerate}
Then the asymptotic formula 
\begin{equation*}
    \mathbb{P}(X_n \leq h) = \frac{y_{h, n}}{y_n} = \exp{(-\kappa n\zeta^h)}(1 + o(1))
\end{equation*}
holds as $n \to \infty$ and $h = \log_d(n) + O(1)$, where $d = \zeta^{-1}$. Hence the shifted random variable 
$X_n - \log_d(n)$ converges weakly to a limiting distribution if $n$ runs through a subset of the positive integers such that the 
fractional part $\{\log_d(n)\}$ of $\log_d(n)$ converges.
\end{theorem}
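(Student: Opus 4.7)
The plan is to apply the asymptotic formula
\[ \mathbb{P}(X_n \le h) = \exp\bigl({-}\kappa n\zeta^{r^h}(1+o(1)) + o(1)\bigr) \]
from Theorem~\ref{thm:pro-wags-doubly-1} directly, valid for $h = m_n + O(1)$ with $m_n = \log_r\log_d n$, at the three values $h = h_n-1$, $h_n$, $h_n+1$. Using the identity $n = d^{r^{m_n}}$, I would rewrite
\[ n\zeta^{r^h} = d^{r^{m_n} - r^h} = d^{\,r^{m_n}(1 - r^{\,h-m_n})}. \]
The key dichotomy is then: if $s := h - m_n$ stays bounded and at least $c>0$, then $1 - r^s \le 1 - r^c$ is a bounded negative constant and $r^{m_n}\to\infty$, so $r^{m_n}(1 - r^s)\to-\infty$ and $n\zeta^{r^h}\to 0$ \emph{doubly}-exponentially fast in $r^{m_n}$; hence $\mathbb{P}(X_n \le h) \to 1$. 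Symmetrically, if $s \le -c < 0$, then $1 - r^s \ge 1 - r^{-c} > 0$ is a bounded positive constant, $r^{m_n}(1 - r^s)\to+\infty$, $n\zeta^{r^h}\to\infty$ doubly-exponentially, and $\mathbb{P}(X_n\le h)\to 0$.

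Given this dichotomy, the theorem reduces to a short case analysis verifying that the prescribed $h_n$ places $h_n+1$ and $h_n-1$ on opposite sides of $m_n$, each at distance at least $1/2$. When $\{m_n\} \le 1/2$ one has $h_n = \lfloor m_n\rfloor$, so $h_n+1-m_n = 1 - \{m_n\} \ge 1/2$ and $h_n-1-m_n = -1-\{m_n\} \le -1$. When $\{m_n\} > 1/2$ one has $h_n = \lceil m_n\rceil$, so $h_n+1-m_n = 2-\{m_n\} > 3/2$ and $h_n-1-m_n = -\{m_n\} < -1/2$. In both regimes the gap satisfies $|h_n\pm 1 - m_n| \ge 1/2$ on the required side, so the dichotomy gives $\mathbb{P}(X_n \le h_n+1)\to 1$ and $\mathbb{P}(X_n \le h_n-1)\to 0$, whence $\mathbb{P}(X_n \in \{h_n,h_n+1\}) \to 1$.

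For the stronger single-value statement under the restriction $\{m_n\} \in [\epsilon,1-\epsilon]$, I would apply the same dichotomy at $h = \lceil m_n\rceil$ (so $s = 1-\{m_n\} \in [\epsilon, 1-\epsilon]$, giving $\mathbb{P}(X_n\le \lceil m_n\rceil)\to 1$) and at $h = \lceil m_n\rceil-1$ (so $s = -\{m_n\} \in [-(1-\epsilon),-\epsilon]$, giving $\mathbb{P}(X_n\le \lceil m_n\rceil-1)\to 0$). Subtracting yields $\mathbb{P}(X_n = \lceil m_n\rceil) \to 1$.

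The one subtle point is making sure that the multiplicative error $(1+o(1))$ multiplying $\kappa n\zeta^{r^h}$ and the additive $o(1)$ inside the $\exp$ provided by Theorem~\ref{thm:pro-wags-doubly-1} do not swamp the signal. This is automatic: the quantity $n\zeta^{r^h}$ tends to $0$ or to $\infty$ at a doubly-exponential rate in $r^{m_n}\to\infty$, which dominates any bounded-rate correction. So the exponent $-\kappa n\zeta^{r^h}(1+o(1)) + o(1)$ tends unambiguously to $0$ on the ``$\to 1$'' side and to $-\infty$ on the ``$\to 0$'' side, and no further quantitative control of the error terms is needed beyond what Theorem~\ref{thm:pro-wags-doubly-1} already provides.
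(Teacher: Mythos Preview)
You have proved the wrong statement. The theorem you were asked to prove is Theorem~\ref{thm:pro-wags-1}, the \emph{single-exponential} distributional result: under the hypothesis $\rho_h = \rho(1+\kappa\zeta^h+o(\zeta^h))$, one concludes $\mathbb{P}(X_n\le h)=\exp(-\kappa n\zeta^h)(1+o(1))$ for $h=\log_d(n)+O(1)$. Your argument instead establishes Theorem~\ref{thm:doubly-exponential-two-points}, the two-point concentration result in the doubly-exponential setting, and it does so by \emph{invoking} Theorem~\ref{thm:pro-wags-doubly-1} as a black box. Nothing in your write-up addresses the content of Theorem~\ref{thm:pro-wags-1}: there is no $r$, no $m_n=\log_r\log_d n$, and no two-point concentration in its statement.

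The paper does not actually prove Theorem~\ref{thm:pro-wags-1} itself (it is quoted from Prodinger--Wagner), but the intended argument is the same as the short proof it gives for the doubly-exponential variant, Theorem~\ref{thm:pro-wags-doubly-1}: apply singularity analysis uniformly in $h$ to obtain $y_{h,n}=\frac{A_h}{\Gamma(-\alpha)}n^{-\alpha-1}\rho_h^{-n}(1+o(1))$ and $y_n=\frac{A}{\Gamma(-\alpha)}n^{-\alpha-1}\rho^{-n}(1+o(1))$, then take the ratio and use $A_h\to A$ together with $\rho_h=\rho(1+\kappa\zeta^h+o(\zeta^h))$ to get $(\rho_h/\rho)^{-n}=\exp(-\kappa n\zeta^h+o(n\zeta^h))$; since $h=\log_d(n)+O(1)$ forces $n\zeta^h$ to be bounded, this simplifies to $\exp(-\kappa n\zeta^h)(1+o(1))$. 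As a separate matter, your proof of Theorem~\ref{thm:doubly-exponential-two-points} is essentially correct and matches the paper's own argument for that result.
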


As we will see, the conditions of this theorem hold for the random variable $X_n$ given by the maximum protection number of a random $n$-vertex tree from a simply generated family that satisfies our technical assumptions. Under slightly stronger assumptions, which also hold in our case, one has the following theorem on the expected value of the random variable $X_n$.

\begin{theorem}[{see~\cite[Theorem 2]{Prodinger-Wagner:ta:boots}}]\label{thm:pro-wags-2}
  In the setting of Theorem~\ref{thm:pro-wags-1}, assume additionally that 
  \begin{enumerate}
    \item There exists a constant $K$ such that $y_{h, n} = y_n$ for $h > Kn$, \label{bullet:thm-2-1}
    \item $\sum_{h \geq 0} \abs{A - A_h} < \infty$, \label{bullet:thm-2-2}
    \item the asymptotic expansions of $Y_h$ and $Y$ around their singularities are given by \label{bullet:thm-2-3}
    \begin{equation*}
      Y_h(x) = U_h(x) + A_h(1 - x/\rho_h)^\alpha + B_h(1 - x/\rho_h)^{\alpha+1} + o((1 - x/\rho_h)^{\alpha+1}),
    \end{equation*}
    uniformly in $h$, and 
    \begin{equation*}
      Y(x) = U(x) + A(1 - x/\rho_h)^\alpha + B(1 - x/\rho_h)^{\alpha+1} + o((1 - x/\rho_h)^{\alpha+1}),
    \end{equation*}
    respectively, such that $\lim_{h\to \infty} B_h = B$. 
  \end{enumerate}
  Then the mean of $X_n$ satisfies 
  \begin{equation*}
    \mathbb{E}(X_n) = \log_d(n) + \log_d(\kappa) + \frac{\gamma}{\log(d)} + \frac{1}{2} + \psi_d(\log_d(\kappa n)) + o(1),
  \end{equation*}
  where $\gamma$ denotes the Euler--Mascheroni constant and $\psi_d$ is given by~\eqref{eq:psib}.
\end{theorem}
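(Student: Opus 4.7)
The plan is to express $\mathbb{E}(X_n) = \sum_{h\geq 0}\mathbb{P}(X_n>h) = \sum_{h\geq 0}(1 - y_{h,n}/y_n)$, approximate each tail probability using the refined singularity information in conditions~(\ref{bullet:thm-2-1})--(\ref{bullet:thm-2-3}), and compute the resulting harmonic sum by Mellin transform. The sum is finite thanks to~(\ref{bullet:thm-2-1}), so the task splits into (a) showing that
\begin{equation*}
\mathbb{E}(X_n) = \sum_{h\geq 0}\bigl(1-\exp(-\kappa n\,d^{-h})\bigr) + o(1),
\end{equation*}
and (b) evaluating the closed-form sum on the right.

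For (a), I would revisit the singularity analysis underlying Theorem~\ref{thm:pro-wags-1}: the next-order singular term $B_h(1-x/\rho_h)^{\alpha+1}$ with $B_h\to B$ from~(\ref{bullet:thm-2-3}) tightens the Cauchy-integral estimate to $y_{h,n}/y_n = (\rho/\rho_h)^n(A_h/A)(1+O(1/n))$ uniformly in $h$; combined with the summability $\sum_h|A-A_h|<\infty$ from~(\ref{bullet:thm-2-2}) and the uniform expansion $(\rho/\rho_h)^n = \exp(-\kappa n\zeta^h)(1+o(1))$, this yields a bound on $\sum_h\bigl|y_{h,n}/y_n - \exp(-\kappa n\zeta^h)\bigr|$ of size $o(1)$. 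Outside the central window $h = \log_d n + O(1)$, the cutoff from~(\ref{bullet:thm-2-1}) handles large $h$, while on the lower tail both terms are doubly-exponentially close to $0$ so their difference is negligible.

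For (b), apply standard Mellin analysis. The Mellin transform of $f(x) = 1 - e^{-x}$ is $-\Gamma(s)$ on $-1 < \Re(s) < 0$, and the amplitude-frequency pairing $(\lambda_h,\mu_h) = (1, d^{-h})$ produces the Dirichlet series $(1-d^s)^{-1}$, so the harmonic sum $S(c) = \sum_{h\geq 0}f(cd^{-h})$ has Mellin transform $S^*(s) = -\Gamma(s)/(1-d^s)$. Shifting the inverse contour from $\Re(s) = \sigma_0 \in (-1,0)$ rightward to $\Re(s) = \sigma_1 > 0$ picks up a double pole at $s=0$ (the simple pole of $\Gamma(s)$ combines with the simple zero of $1-d^s$, whose derivative is $-\log d$) and simple poles at $s_k = 2k\pi i/\log(d)$ for $k \neq 0$. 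A direct Laurent expansion at $s=0$ yields the residue $-\log_d c - \gamma/\log(d) - 1/2$; the residues $\Gamma(s_k)c^{-s_k}/\log(d)$ at $s_k$ reassemble (after the substitution $k \leftrightarrow -k$) into $-\psi_d(\log_d c)$ from~\eqref{eq:psib}. With the shifted contour contributing $O(c^{-\sigma_1})$ by Stirling's decay of $\Gamma$ on vertical lines, one obtains $S(c) = \log_d c + \gamma/\log(d) + 1/2 + \psi_d(\log_d c) + o(1)$; substituting $c = \kappa n$ and using $\log_d(\kappa n) = \log_d n + \log_d \kappa$ delivers the claimed expression.

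The principal obstacle is step (a): the error bound in Theorem~\ref{thm:pro-wags-1} is only relative $(1+o(1))$, and a relative $o(1)$ summed over the $\Theta(\log n)$ relevant values of $h$ would destroy the $o(1)$ target. The three additional hypotheses~(\ref{bullet:thm-2-1})--(\ref{bullet:thm-2-3}) of Theorem~\ref{thm:pro-wags-2} are precisely what is needed to upgrade this to a summable \emph{additive} error; once this is achieved, the Mellin residue calculation is essentially routine, familiar from the asymptotic analysis of trie height and related digital-tree parameters.
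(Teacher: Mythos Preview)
The paper does not prove this statement: Theorem~\ref{thm:pro-wags-2} is quoted verbatim from \cite[Theorem~2]{Prodinger-Wagner:ta:boots} and used as a black box, so there is no ``paper's own proof'' to compare against. Your sketch is, however, an accurate outline of the standard argument (and of the proof in the cited reference): write $\mathbb{E}(X_n)=\sum_{h\ge 0}(1-y_{h,n}/y_n)$, use the strengthened singularity data~(\ref{bullet:thm-2-2})--(\ref{bullet:thm-2-3}) together with the cutoff~(\ref{bullet:thm-2-1}) to replace this by $\sum_{h\ge 0}\bigl(1-e^{-\kappa n\zeta^h}\bigr)+o(1)$, and evaluate the resulting harmonic sum via the Mellin transform $-\Gamma(s)/(1-d^s)$, collecting the double pole at $s=0$ and the simple poles on the imaginary axis at $2k\pi i/\log d$.

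One point deserves a bit more care than you indicate. In step~(a) you assert that the refined expansion gives $y_{h,n}/y_n=(\rho/\rho_h)^n(A_h/A)(1+O(1/n))$ uniformly in $h$; this is what is needed, but the $O(1/n)$ must really be uniform over \emph{all} $h$ simultaneously (not merely for each fixed $h$), and one also has to check that after expanding $(\rho/\rho_h)^n=\exp\bigl(-\kappa n\zeta^h(1+o(1))\bigr)$ the resulting error, when summed over the $\Theta(\log n)$ values of $h$ in the central window, is still $o(1)$. The hypothesis $\sum_h|A-A_h|<\infty$ handles the $A_h/A$ factor, but the $o(\zeta^h)$ in the expansion of $\rho_h$ contributes an error of size $o(n\zeta^h)$ in the exponent, which over the window $h=\log_d n+O(1)$ is only $o(1)$ \emph{per term}; one needs either a slightly stronger rate (geometric in $h$, which in applications one typically has) or a more delicate summation argument. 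You flag this as ``the principal obstacle,'' and you are right that it is where the real work lies; your sketch of step~(b) is entirely routine and correct.
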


\subsection{A system of functional equations}\label{sec:system-funct-eqn}

As a first step of our analysis, we consider a number of auxiliary generating
functions and derive a system of functional equations that is satisfied by
these generating functions.  The family of simply generated trees and the
associated weight generating function $\Phi$ are regarded fixed throughout. Let
$h$ be a positive integer and $k$ an integer with $0 \leq k \leq h$. Consider
trees with the following two properties:
\begin{enumerate}[label=P\arabic*.,ref=P\arabic*]
\item\label{property1} No vertex has a protection number greater than $h$.
\item\label{property2} The root is $k$-protected (but also has protection number at most $h$).
\end{enumerate}
Let $Y_{h, k}(x)$ be the associated generating function, where $x$ marks the
number of vertices. Note in particular that when $k = 0$, we obtain the
generating function for trees where the maximum protection number is at most
$h$. Hence we can express the probability that the maximum protection number of
a random $n$-vertex tree (from our simply generated family) is at most $h$ as
the quotient
\begin{equation*}
\frac{[x^n] Y_{h,0}(x)}{[x^n] Y(x)}.
\end{equation*}
This is precisely the form of~\eqref{eq:definition-of-Xn}, and indeed our
general strategy will be to show that the generating functions $Y_{h,0}$
satisfy the technical conditions of Theorem~\ref{thm:pro-wags-1}. Compared to
the examples given in~\cite{Prodinger-Wagner:ta:boots}, this will be a rather
lengthy technical task. However, we believe that the general method, in which a
sequence of functional equations is shown to converge uniformly in a suitable
region, is also potentially applicable to other instances and therefore
interesting in its own right.

Let us now derive a system of functional equations, using the standard
decomposition of a rooted tree into the root and its branches. Clearly, if a
tree has property~\ref{property1}, then this must also be the case for all its
branches. Moreover, property~\ref{property2} is satisfied for $k > 0$ if and
only if the root of each of the branches is at least $(k-1)$-protected, but not
all of them are $h$-protected (as this would make the root
$(h+1)$-protected). Thus, for $1 \leq k \leq h$, we have
\begin{equation}\label{eq:func-eq-2}
Y_{h, k}(x) = x\Phi(Y_{h, k-1}(x)) - x\Phi(Y_{h, h}(x)).
\end{equation}
Note that the only case in which the root is only $0$-protected is when the root is the only vertex. Hence we have
\begin{equation}\label{eq:func-eq-1}
  Y_{h, 0}(x) = Y_{h, 1}(x) + x.
\end{equation}

The analytic properties of the system of functional equations given
by~\eqref{eq:func-eq-2} and~\eqref{eq:func-eq-1} will be studied in the
following section, culminating in
Proposition~\ref{prop:analytic_properties_of_Yh}, which shows that
Theorem~\ref{thm:pro-wags-1} is indeed applicable to our problem.

\section{Analysis of the functional equations}\label{sec:contraction}

\subsection{Contractions and implicit equations}

This section is devoted to a detailed analysis of the generating functions
$Y_{h,k}$ that satisfy the system of equations given by~\eqref{eq:func-eq-2}
and~\eqref{eq:func-eq-1}. The first step will be to reduce it to a single
implicit equation satisfied by $Y_{h,1}$ that is then shown to converge to the
functional equation~\eqref{eq:Y_functional_equation} in a sense that will be
made precise. This is then used to infer information on the region of
analyticity of $Y_{h,1}$ as well as its behaviour around the dominant
singularity, which is also shown to converge to the dominant singularity of
$Y$. This information is collected in
Proposition~\ref{prop:analytic_properties_of_Yh} at the end of the section.

In the following, we will prove various statements for sufficiently small
$\varepsilon>0$. In several, but finitely many, steps it might be necessary to
decrease $\varepsilon$; we tacitly assume that $\varepsilon$ is always small
enough to ensure validity of all statements up to the given point. In order to
avoid ambiguities, we will always assume that $\varepsilon<1$. Let us remark
that $\varepsilon$ and other constants as well as all implied $O$-constants
that occur in this section depend on the specific simply generated family of
trees (in particular the weight generating function $\Phi$ and therefore $\rho$ and $\tau$), but nothing else.

Recall that $\rho$ is the dominant singularity of the generating function $Y$
of our simply generated family of trees. Moreover, $\tau = Y(\rho)$ is
characterised by the equation $\tau \Phi'(\tau) = \Phi(\tau)$ (see \eqref{eq:fundamental-connection-between-greek-variables}) and satisfies
$\tau = \rho \Phi(\tau)$. Since $\Phi$ is increasing and $\Phi(0) = 1$, we also
have $\tau = \rho\Phi(\tau) > \rho\Phi(0) = \rho$.

Let us write $D_{\delta}(w) := \{ z \in \C \,: \, \abs{z-w} < \delta\}$ for open disks. For $\varepsilon>0$, we define
\begin{align*}
  \Xi_\varepsilon^{(1)}&\coloneqq D_{\rho+\varepsilon}(0),\\
  \Xi_\varepsilon^{(2)}&\coloneqq D_{\tau - \rho+\varepsilon}(0),\\
  \Xi_\varepsilon^{(3)}&\coloneqq D_{\varepsilon}(0).
\end{align*}
For $1\le j<k\le 3$, we set $\Xi^{(j, k)}_\varepsilon\coloneqq
\Xi^{(j)}_\varepsilon\times \Xi^{(k)}_\varepsilon$, and we also set
$\Xi_{\varepsilon}\coloneqq \Xi^{(1, 2,
  3)}_{\varepsilon}\coloneqq \Xi^{(1)}_\varepsilon\times
\Xi^{(2)}_\varepsilon\times \Xi^{(3)}_\varepsilon$.
As $\tau$ is less than the radius of convergence of $\Phi$ by our assumptions, we
may choose $\varepsilon>0$ sufficiently small such that $\tau+2\varepsilon$ is still smaller than the radius of convergence of $\Phi$.

Consider the function defined by $f_{x, z}(y) = x(\Phi(y)-\Phi(z))$. We can rewrite the functional equation~\eqref{eq:func-eq-2} in terms of this function as
\begin{equation}\label{eq:functional-equation-via-F}
  Y_{h, k}(x) = f_{x, Y_{h, h}(x)}(Y_{h, k-1}(x))
\end{equation}
for $1\le k\le h$. For $j\ge 0$, we denote
the $j$th iterate of $f_{x, z}$ by $f^{(j)}_{x, z}$, i.\,e., $f^{(0)}_{x, z}(y) = y$
and $f^{(j+1)}_{x, z}(y)=f_{x, z}^{(j)}(f_{x, z}(y))$ for $j\ge
0$. Iterating~\eqref{eq:functional-equation-via-F} then yields
\begin{equation*}
  Y_{h, k}(x) = f_{x, Y_{h,h}(x)}(Y_{h, k-1}(x))=\cdots = f_{x, Y_{h,
      h}(x)}^{(k-1)}(Y_{h, 1}(x))
\end{equation*}
for $1\le k\le h$ and therefore
\begin{equation}\label{eq:short-system-1}
  Y_{h, h}(x) = f^{(h-1)}_{x, Y_{h, h}(x)}(Y_{h, 1}(x)).
\end{equation}

Plugging~\eqref{eq:func-eq-1} into~\eqref{eq:func-eq-2} for
$k=1$ yields
\begin{equation}\label{eq:short-system-2}
  Y_{h, 1}(x) = x\big(\Phi(Y_{h, 1}(x)+x)-\Phi(Y_{h, h}(x))\big).
\end{equation}
This means that~\eqref{eq:short-system-1} and~\eqref{eq:short-system-2} are a
system of two functional equations for $Y_{h, 1}(x)$ and $Y_{h, h}(x)$. We
intend to solve~\eqref{eq:short-system-1} for $Y_{h, h}(x)$ and then plug the
solution into~\eqref{eq:short-system-2}. As a first step towards this goal, we show that $f_{x,z}$ represents a contraction on a suitable region.

\begin{lemma}\label{lemma:contraction-preparation}
  For sufficiently small $\varepsilon>0$, we have $\abs{f_{x,z}(y)}<\tau-\rho$
  for all $(x, y, z)\in\Xi_{\varepsilon}$.
\end{lemma}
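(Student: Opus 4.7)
The plan is to estimate $\abs{f_{x,z}(y)} = \abs{x}\cdot\abs{\Phi(y)-\Phi(z)}$ by reducing everything to real arguments via the nonnegativity of the Taylor coefficients of $\Phi$, and then to exploit the defining relation $\rho\Phi(\tau)=\tau$ to obtain the desired strict inequality at $\varepsilon = 0$. A simple continuity argument then extends the bound to a full neighbourhood of the origin.

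Concretely, I would first write
\begin{equation*}
\abs{\Phi(y)-\Phi(z)} \le \abs{\Phi(y)-1}+\abs{\Phi(z)-1} \le (\Phi(\abs{y})-1)+(\Phi(\abs{z})-1),
\end{equation*}
where the second inequality holds because the Taylor coefficients of $\Phi$ are nonnegative. Combined with the bounds $\abs{x}<\rho+\varepsilon$, $\abs{y}<\tau-\rho+\varepsilon$ and $\abs{z}<\varepsilon$ coming from $(x,y,z)\in\Xi_\varepsilon$, and the monotonicity of $\Phi$ on $[0,\infty)$, this yields
\begin{equation*}
\abs{f_{x,z}(y)} \le (\rho+\varepsilon)\bigl(\Phi(\tau-\rho+\varepsilon)+\Phi(\varepsilon)-2\bigr) =: g(\varepsilon).
\end{equation*}
The right-hand side is well-defined because both $\tau-\rho+\varepsilon$ and $\varepsilon$ lie below the radius of convergence of $\Phi$ for $\varepsilon$ sufficiently small, as noted just before the lemma.

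The decisive step is to verify that $g(0)<\tau-\rho$. Indeed, $g(0) = \rho(\Phi(\tau-\rho)-1)$, and by~\eqref{eq:fundamental-connection-between-greek-variables} we have $\rho\Phi(\tau)=\tau$; thus the desired inequality $\rho\Phi(\tau-\rho)-\rho < \tau-\rho$ is equivalent to $\Phi(\tau-\rho)<\Phi(\tau)$, which follows from the strict monotonicity of $\Phi$ on $[0,\infty)$ (granted by $w_0=1$ together with the standing assumption that $w_j>0$ for some $j\ge 2$) and the fact that $\tau-\rho<\tau$ (since $\rho>0$). Once this is established, continuity of $g$ at $0$ immediately gives $g(\varepsilon)<\tau-\rho$ for all sufficiently small $\varepsilon>0$, which proves the lemma. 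There is no real obstacle here: the inequality $\rho\Phi(\tau-\rho)<\rho\Phi(\tau)=\tau$ provides precisely the slack needed to absorb small perturbations on all three factors simultaneously.
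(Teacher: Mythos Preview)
Your proof is correct and follows essentially the same approach as the paper: bound $\abs{\Phi(y)-\Phi(z)}$ by $(\Phi(\abs{y})-1)+(\Phi(\abs{z})-1)$ via the triangle inequality and nonnegativity of the coefficients of $\Phi$, evaluate the resulting bound at $\varepsilon=0$ to obtain $\rho\Phi(\tau-\rho)-\rho<\rho\Phi(\tau)-\rho=\tau-\rho$, and conclude by continuity. The only cosmetic difference is that you name the bound $g(\varepsilon)$ explicitly.
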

\begin{proof}
  By the triangle inequality, definition of $\Xi_{\varepsilon}$,
  non-negativity of the coefficients of $\Phi$, and $\Phi(0)=1$, we have
  \begin{align*}
    \abs{f_{x,z}(y)}&=\abs{x\bigl((\Phi(y)-1)-(\Phi(z)-1)\bigr)}\\
                    &\le (\rho+\varepsilon)(\abs{\Phi(y)-1}+\abs{\Phi(z)-1})\\
                    &\le (\rho+\varepsilon)((\Phi(\abs{y})-1)+(\Phi(\abs{z})-1))\\
    &\le (\rho+\varepsilon)(\Phi(\tau-\rho+\varepsilon)-1+\Phi(\varepsilon)-1).
  \end{align*}
  For $\varepsilon\to 0$, the upper bound converges to
  $\rho\Phi(\tau-\rho)-\rho$ because we are assuming that $\Phi(0)=1$. As
  $\rho\Phi(\tau-\rho)-\rho < \rho\Phi(\tau)-\rho=\tau-\rho$
  by~\eqref{eq:fundamental-connection-between-greek-variables}, the assertion of
  the lemma holds for sufficiently small $\varepsilon>0$.
\end{proof}

\begin{lemma}\label{lemma:f-prime-less-than-1}
  For sufficiently small $\varepsilon > 0$ and $(x, y, z) \in \Xi_{\varepsilon}$, we have $\abs{f'_{x, z}(y)} = \abs{x \Phi'(y)} \leq \lambda$ for some constant $\lambda < 1$. 
\end{lemma}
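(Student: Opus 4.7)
The identity $f'_{x,z}(y)=x\Phi'(y)$ is immediate from the definition of $f_{x,z}$, so the substance of the lemma is the bound. The plan is a direct analogue of Lemma~\ref{lemma:contraction-preparation}: estimate $\abs{x\Phi'(y)}$ trivially on $\Xi_{\varepsilon}$ using the positivity of the coefficients of $\Phi'$, take the limit $\varepsilon\to0$, and show that the limit is strictly smaller than $1$.

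Concretely, I would first note that for $(x,y,z)\in\Xi_{\varepsilon}$ one has $\abs{x}<\rho+\varepsilon$ and $\abs{y}<\tau-\rho+\varepsilon$. Since $\Phi'(t)=\sum_{j\ge1}jw_jt^{j-1}$ has nonnegative coefficients and $\tau-\rho+\varepsilon$ lies inside the radius of convergence of $\Phi$ for small enough $\varepsilon$, we get
\begin{equation*}
\abs{x\Phi'(y)}\le(\rho+\varepsilon)\Phi'(\abs{y})\le(\rho+\varepsilon)\Phi'(\tau-\rho+\varepsilon).
\end{equation*}
As $\varepsilon\to 0$ the right-hand side tends to $\rho\Phi'(\tau-\rho)$, so it suffices to prove $\rho\Phi'(\tau-\rho)<1$ and then pick any $\lambda$ strictly between these two quantities; by continuity, $(\rho+\varepsilon)\Phi'(\tau-\rho+\varepsilon)\le\lambda$ will hold for all sufficiently small $\varepsilon>0$.

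To establish $\rho\Phi'(\tau-\rho)<1$, I would use the defining relation $\rho\Phi'(\tau)=1$ from~\eqref{eq:fundamental-connection-between-greek-variables} together with the strict monotonicity of $\Phi'$ on the positive reals. The latter follows from the standing assumption $w_j>0$ for some $j\ge2$, which gives $\Phi''(t)=\sum_{j\ge2}j(j-1)w_jt^{j-2}>0$ on $(0,\tau]$. Since $\rho>0$ implies $\tau-\rho<\tau$, strict monotonicity yields $\Phi'(\tau-\rho)<\Phi'(\tau)$, and multiplying by $\rho$ gives $\rho\Phi'(\tau-\rho)<\rho\Phi'(\tau)=1$, as required.

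I do not anticipate any real obstacle here; the only mild technical point is justifying the two monotonicity steps, which rely on positivity of coefficients and on the assumption that $\Phi$ has a genuinely nonlinear term. The role of the lemma is to set up the Banach fixed-point / implicit-function argument needed to solve~\eqref{eq:short-system-1} for $Y_{h,h}$ in terms of $Y_{h,1}$ on a region strictly larger than the disc of convergence, so the estimate $\lambda<1$ must be uniform in $(x,y,z)\in\Xi_\varepsilon$, which is exactly what the argument above delivers.
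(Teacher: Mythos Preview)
Your proposal is correct and is essentially identical to the paper's proof: both bound $\abs{x\Phi'(y)}$ by $(\rho+\varepsilon)\Phi'(\tau-\rho+\varepsilon)$ using nonnegativity of the coefficients of $\Phi'$, let $\varepsilon\to 0$, and conclude via $\rho\Phi'(\tau-\rho)<\rho\Phi'(\tau)=1$. If anything, you are slightly more explicit than the paper in justifying the strict monotonicity of $\Phi'$ and in spelling out how $\lambda$ is chosen.
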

\begin{proof}
  For any triple $(x, y, z)\in \Xi_{\varepsilon}$,
  \begin{equation*}
    \abs{f'_{x, z}(y)}=\abs{x\Phi'(y)}\le (\rho+\varepsilon)\Phi'(\tau-\rho+\varepsilon).
  \end{equation*}
  For $\varepsilon\to 0$, the upper bound converges to $\rho\Phi'(\tau-\rho)$,
  which is less than $\rho\Phi'(\tau)=1$
  (by~\eqref{eq:fundamental-connection-between-greek-variables}).
\end{proof}
For the remainder of this section, $\lambda$ will be defined as in
Lemma~\ref{lemma:f-prime-less-than-1}.

\begin{lemma}\label{lemma:contraction}
  For sufficiently small $\varepsilon>0$ and $(x,
z)\in\Xi_{\varepsilon}^{(1, 3)}$, $f_{x,z}$ maps $\Xi_{\varepsilon}^{(2)}$ to
itself and is a contraction with Lipschitz constant $\lambda$.
\end{lemma}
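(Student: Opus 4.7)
The plan is to read off both assertions directly from the two preceding lemmas, using only the fact that $\Xi_\varepsilon^{(2)}$ is an open disk centred at $0$, hence convex.

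For the self-map property, fix $(x,z)\in\Xi_\varepsilon^{(1,3)}$ and $y\in\Xi_\varepsilon^{(2)}$. Then the triple $(x,y,z)$ lies in $\Xi_\varepsilon$, so Lemma~\ref{lemma:contraction-preparation} applies and yields $\abs{f_{x,z}(y)}<\tau-\rho<\tau-\rho+\varepsilon$. This places $f_{x,z}(y)$ inside $\Xi_\varepsilon^{(2)}$, as required.

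For the Lipschitz estimate, pick two points $y_1,y_2\in\Xi_\varepsilon^{(2)}$. Since $\Xi_\varepsilon^{(2)}$ is an open disk centred at the origin, the line segment $\gamma(t)=y_2+t(y_1-y_2)$, $t\in[0,1]$, lies entirely inside $\Xi_\varepsilon^{(2)}$. Because $f_{x,z}$ is holomorphic in $y$, I can write
\begin{equation*}
  f_{x,z}(y_1)-f_{x,z}(y_2)=\int_0^1 f'_{x,z}(\gamma(t))\,(y_1-y_2)\,dt.
\end{equation*}
Along this segment $(x,\gamma(t),z)\in\Xi_\varepsilon$, so Lemma~\ref{lemma:f-prime-less-than-1} bounds $\abs{f'_{x,z}(\gamma(t))}$ by $\lambda$. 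Taking absolute values under the integral gives $\abs{f_{x,z}(y_1)-f_{x,z}(y_2)}\le\lambda\abs{y_1-y_2}$, which is the claimed contraction estimate.

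There is essentially no obstacle here: both assertions are immediate consequences of the two lemmas that were set up in preparation, together with the geometric observation that $\Xi_\varepsilon^{(2)}$ is convex so that the derivative bound from Lemma~\ref{lemma:f-prime-less-than-1} translates into a Lipschitz bound via the standard line-integral argument. The only point that needs a little care is ensuring that the same value of $\varepsilon$ works for both conclusions, which is automatic because each of the two invoked lemmas already holds for all sufficiently small $\varepsilon>0$, and we simply take the minimum of the two thresholds.
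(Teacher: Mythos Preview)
Your proof is correct and follows essentially the same approach as the paper: the self-map property is deduced directly from Lemma~\ref{lemma:contraction-preparation}, and the Lipschitz estimate is obtained by integrating the derivative bound of Lemma~\ref{lemma:f-prime-less-than-1} along the segment $[y_1,y_2]\subseteq\Xi_\varepsilon^{(2)}$. The paper's version is slightly terser (it writes the integral over $[y_1,y_2]$ without explicitly parametrising), but the argument is the same.
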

\begin{proof}
  The fact that $f_{x,z}$ maps $\Xi_{\varepsilon}^{(2)}$ to
  itself for sufficiently small $\varepsilon>0$ is a direct
  consequence of Lemma~\ref{lemma:contraction-preparation}.

   Making use of Lemma~\ref{lemma:f-prime-less-than-1}, the contraction property now follows by a standard argument:
  For $y_1$, $y_2\in \Xi_{\varepsilon}^{(2)}$, we have
  \begin{equation*}
    \abs{f_{x, z}(y_2)-f_{x, z}(y_1)}\le \int_{[y_1, y_2]}\abs{f'_{x,
        z}(y)}\,\abs{dy}\le \lambda \abs{y_2-y_1}.\qedhere
  \end{equation*}
\end{proof}

For sufficiently small $\varepsilon$ and $(x, z)\in\Xi_{\varepsilon}^{(1, 3)}$,
Banach's fixed point theorem together with Lemma~\ref{lemma:contraction}
implies that $f_{x,z}$ has a unique fixed point in $\Xi_{\varepsilon}^{(2)}$.
This fixed point will be denoted by $g(x, z)$, i.\,e.,
\begin{equation}\label{eq:implicit-equation-for-g}
  g(x, z)=f_{x, z}(g(x, z)) = x(\Phi(g(x, z))-\Phi(z)).
\end{equation}
If we plug in $0$ for $z$, we see that~\eqref{eq:implicit-equation-for-g} holds for $g(x,0) =  0$, so uniqueness of the fixed point implies that
\begin{equation}\label{eq:g-when-z-is-zero}
  g(x, 0)=0
\end{equation}
for $x\in\Xi_{\varepsilon}^{(1)}$.

\begin{lemma}\label{lemma:g-is-analytic}
  For sufficiently small $\varepsilon>0$, $g\colon \Xi_{\varepsilon}^{(1,
    3)}\to\Xi_\varepsilon^{(2)} $ is an analytic
  function, and $\frac{\partial}{\partial z} g(x,z)$ is bounded.
\end{lemma}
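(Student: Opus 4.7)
My plan is to recognize $g$ as an implicit function and apply the analytic implicit function theorem (in two complex variables). Define
\[
F(x,y,z) = y - x\bigl(\Phi(y) - \Phi(z)\bigr).
\]
Because $\tau+2\varepsilon$ lies below the radius of convergence of $\Phi$, the function $F$ is jointly analytic on a neighbourhood of $\overline{\Xi_\varepsilon}$. The defining relation~\eqref{eq:implicit-equation-for-g} is exactly $F(x,g(x,z),z)=0$ for $(x,z)\in \Xi_\varepsilon^{(1,3)}$.

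The first key step is to check the nonvanishing of $\partial F/\partial y$ along the graph of $g$. A direct computation gives $\partial F/\partial y = 1 - x\Phi'(y)$. Since $g(x,z) \in \Xi_\varepsilon^{(2)}$, the triple $(x, g(x,z), z)$ lies in $\Xi_\varepsilon$, so Lemma~\ref{lemma:f-prime-less-than-1} yields $\lvert x\Phi'(g(x,z))\rvert \leq \lambda < 1$, hence
\[
\Bigl\lvert \tfrac{\partial F}{\partial y}(x,g(x,z),z)\Bigr\rvert \;\geq\; 1-\lambda \;>\; 0
\]
uniformly on $\Xi_\varepsilon^{(1,3)}$. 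The analytic implicit function theorem then produces, locally around each $(x_0,z_0)\in \Xi_\varepsilon^{(1,3)}$, a unique analytic solution $\tilde g(x,z)$ of $F(x,\tilde g,z)=0$ with $\tilde g(x_0,z_0)=g(x_0,z_0)$. By uniqueness of the Banach fixed point in $\Xi_\varepsilon^{(2)}$, this local solution agrees with $g$ wherever it stays in $\Xi_\varepsilon^{(2)}$, which it does by continuity on a small neighbourhood. Piecing these local identifications together gives analyticity of $g$ on all of $\Xi_\varepsilon^{(1,3)}$.

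For the derivative bound, I would implicitly differentiate $F(x,g(x,z),z)=0$ with respect to $z$, yielding
\[
\frac{\partial g}{\partial z}(x,z) \;=\; \frac{x\Phi'(z)}{1 - x\,\Phi'\bigl(g(x,z)\bigr)}.
\]
The denominator is bounded below in modulus by $1-\lambda$ by the computation above. For the numerator, since $\Phi$ has nonnegative coefficients and converges on a disk of radius exceeding $\tau$, we have $\lvert x\Phi'(z)\rvert \leq (\rho+\varepsilon)\Phi'(\varepsilon)$ on $\Xi_\varepsilon^{(1,3)}$. Hence $\partial g/\partial z$ is uniformly bounded.

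I do not anticipate a serious obstacle: the whole argument is a direct application of the implicit function theorem, and all of the analytic input (bound on $\lvert x\Phi'(y)\rvert$, domain of analyticity of $\Phi$, fixed point existence) has already been set up in Lemmas~\ref{lemma:contraction-preparation}--\ref{lemma:contraction}. The only place one has to be mildly careful is in matching the locally defined implicit solution to the globally defined fixed point $g$; this is handled by uniqueness of the fixed point inside $\Xi_\varepsilon^{(2)}$.
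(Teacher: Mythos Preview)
Your proposal is correct and follows essentially the same route as the paper: apply the analytic implicit function theorem to $y - f_{x,z}(y)$, using Lemma~\ref{lemma:f-prime-less-than-1} to bound $\lvert 1 - x\Phi'(y)\rvert \ge 1-\lambda$ away from zero, and then read off analyticity and the bound on $\partial g/\partial z$. One minor slip: implicit differentiation actually gives $\partial g/\partial z = -\,x\Phi'(z)/\bigl(1 - x\Phi'(g(x,z))\bigr)$ (you dropped a sign), but this does not affect the boundedness conclusion.
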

\begin{proof}
   Note that using Lemma~\ref{lemma:f-prime-less-than-1}, we have that $\abs{\frac{\partial}{\partial y}(y-f_{x, z}(y))}=\abs{1-f_{x,z}'(y)} \geq 1 - \abs{f_{x,z}'(y)} \geq 1 - \lambda$ 
   is bounded away from zero for sufficiently small $\varepsilon>0$ and $(x, y, z)\in\Xi_{\varepsilon}$.
  Thus the analytic implicit function theorem shows that $g$ as defined
  by~\eqref{eq:implicit-equation-for-g} is analytic and has bounded partial derivative $\frac{\partial}{\partial z} g(x,z)$ on $\Xi_{\varepsilon}^{(1,
    3)}$ for sufficiently small $\varepsilon>0$.
\end{proof}

We now intend to solve~\eqref{eq:short-system-1} for $Y_{h, h}(x)$. Therefore, we
consider the equation
\begin{equation}\label{eq:implicit-equation-1}
  z=f_{x, z}^{(h-1)}(y)
\end{equation}
and attempt to solve it for $z$. For large $h$, $f_{x, z}^{(h-1)}(y)$ will be
close to the fixed point $g(x, z)$ of $f_{x, z}$
by the Banach fixed point theorem.

Therefore, we define
$\Lambda_h$ as the difference between the two: $\Lambda_h(x, y, z)\coloneqq
f_{x,z}^{(h-1)}(y)-g(x, z)$. So~\eqref{eq:implicit-equation-1} can be
rewritten as
\begin{equation}\label{eq:implicit-equation-2}
  z=g(x, z)+ \Lambda_h(x, y, z).
\end{equation}

We first establish bounds on $\Lambda_h$.

\begin{lemma}\label{lem:Lambda-small}
  For sufficiently small $\varepsilon>0$,
  \begin{align}
    \Lambda_h(x, y, z)&=O(\lambda^h)\label{eq:bound-for-Lambda} \text{ and }\\
    \frac{\partial}{\partial z}\Lambda_h(x, y, z)&=O(\lambda^h)\label{eq:bound-for-derivative-of-Lambda}
  \end{align}
  hold uniformly for $(x, y, z)\in\Xi_{\varepsilon}$.
\end{lemma}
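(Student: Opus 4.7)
The plan is to exploit the Banach contraction set up in Lemma~\ref{lemma:contraction}: $\Lambda_h$ measures how far the Picard iterate $f_{x,z}^{(h-1)}(y)$ lies from the fixed point $g(x,z)$, so both the quantity and its $z$-derivative should decay geometrically.

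For~\eqref{eq:bound-for-Lambda}, since $f_{x,z}$ is $\lambda$-Lipschitz on $\Xi_\varepsilon^{(2)}$ and fixes $g(x,z)$,
\begin{equation*}
|\Lambda_h(x,y,z)| = \bigl|f_{x,z}^{(h-1)}(y) - f_{x,z}^{(h-1)}(g(x,z))\bigr| \leq \lambda^{h-1}|y - g(x,z)|,
\end{equation*}
and the last factor is bounded by the diameter of $\Xi_\varepsilon^{(2)}$, uniformly on $\Xi_\varepsilon$.

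For~\eqref{eq:bound-for-derivative-of-Lambda}, set $y_0 = y$, $y_j = f_{x,z}(y_{j-1}) = x\Phi(y_{j-1}) - x\Phi(z)$, and $a_j = \partial_z y_j$. Differentiating the recursion gives $a_j = x\Phi'(y_{j-1})\, a_{j-1} - x\Phi'(z)$ with $a_0 = 0$, while differentiating~\eqref{eq:implicit-equation-for-g} yields
\begin{equation*}
\partial_z g(x,z) = \frac{-x\Phi'(z)}{1 - x\Phi'(g(x,z))},
\end{equation*}
which is uniformly bounded on $\Xi_\varepsilon^{(1,3)}$ since $|x\Phi'(g(x,z))| \leq \lambda < 1$. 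Setting $b_j := a_j - \partial_z g$ and subtracting the identity $\partial_z g = x\Phi'(g)\,\partial_z g - x\Phi'(z)$ from the recursion for $a_j$,
\begin{equation*}
b_j = x\Phi'(y_{j-1})\, b_{j-1} + x\bigl(\Phi'(y_{j-1}) - \Phi'(g(x,z))\bigr)\,\partial_z g(x,z).
\end{equation*}
Using $|x\Phi'(y_{j-1})| \leq \lambda$ together with the Lipschitz continuity of $\Phi'$ on a neighbourhood of $\Xi_\varepsilon^{(2)}$ and the estimate $|y_{j-1} - g(x,z)| = O(\lambda^{j-1})$ from the first part, the inhomogeneous term is $O(\lambda^{j-1})$. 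Iterating the recursion for $b_j$ then yields $|b_{h-1}| = O(h\lambda^{h})$.

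The main technical point is that this iterated estimate carries an unwanted linear factor of $h$. This is harmless, since Lemma~\ref{lemma:f-prime-less-than-1} allows $\lambda$ to be taken as any value strictly between $\rho\Phi'(\tau-\rho)$ and $1$; enlarging $\lambda$ marginally from the outset (shrinking $\varepsilon$ if necessary to preserve the contraction with this new constant) absorbs the factor $h$ into a slightly weakened geometric decay and delivers the claimed $O(\lambda^h)$ bound in~\eqref{eq:bound-for-derivative-of-Lambda}.
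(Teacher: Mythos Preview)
Your argument is correct. The first part matches the paper's proof exactly. For~\eqref{eq:bound-for-derivative-of-Lambda}, however, the paper takes a shorter route: rather than differentiating the recursion term by term, it applies Cauchy's integral formula in the $z$-variable,
\[
\frac{\partial}{\partial z}\Lambda_h(x,y,z)=\frac{1}{2\pi i}\oint_{|\zeta-z|=\varepsilon/3}\frac{\Lambda_h(x,y,\zeta)}{(\zeta-z)^2}\,d\zeta,
\]
valid for $(x,y,z)\in\Xi_{\varepsilon/3}$, and then reads off the $O(\lambda^h)$ bound directly from~\eqref{eq:bound-for-Lambda} applied to the integrand. After replacing $\varepsilon$ by $\varepsilon/3$ one obtains the claim. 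This avoids tracking the recursion for $b_j$, produces no spurious factor of $h$, and hence no need to renegotiate the value of~$\lambda$; it also extends with no extra work to higher partial derivatives, which the paper exploits later (e.g.\ in deriving~\eqref{eq:pd_approx}). Your direct approach is more hands-on and would still work in a setting without analyticity in $z$, but here the Cauchy estimate is the natural tool.
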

\begin{proof}
  Since $g$ is defined as the fixed point of $f_{x, z}$ and $f_{x, z}$ is a contraction with Lipschitz constant $\lambda$, we have
  \begin{equation*}
    \abs{\Lambda_h(x, y, z)} = \abs{f_{x, z}^{(h-1)}(y) - f_{x, z}^{(h-1)}(g(x, z))} \le \lambda^{h-1} \abs{y-g(x, z)}=O(\lambda^h)
  \end{equation*}
  for $(x, y, z)\in\Xi_\varepsilon$, so we have shown~\eqref{eq:bound-for-Lambda}.

  For $(x, y, z)\in\Xi_{\varepsilon/3}$, Cauchy's integral formula yields
  \begin{equation*}
    \frac{\partial}{\partial z}\Lambda_h(x, y, z)=\frac{1}{2\pi i}\oint_{\abs{\zeta-z}=\varepsilon/3}\frac{\Lambda_h(x, y, \zeta)}{(\zeta-z)^2}\,d\zeta.
  \end{equation*}
  By~\eqref{eq:bound-for-Lambda}, we can bound the integral by $O(\lambda^h)$. Thus replacing $\varepsilon$ by $\varepsilon/3$ yields~\eqref{eq:bound-for-derivative-of-Lambda}.
\end{proof}

In order to apply the analytic implicit function theorem to the implicit equation~\eqref{eq:short-system-1} for $Y_{h,h}$, we will need to show that the derivative
of the difference of the two sides of~\eqref{eq:implicit-equation-2} with respect to $z$ is nonzero.
The derivative of the second summand on the right-hand side of~\eqref{eq:implicit-equation-2} is
small by~\eqref{eq:bound-for-derivative-of-Lambda}, so we first consider the
remaining part of the equation.

\begin{lemma}\label{lemma:derivative-z-minus-g-x-z-bounded-away}
  There is a $\delta>0$ such that for sufficiently small $\varepsilon>0$, we
  have
  \begin{equation}\label{eq:derivative-estimate}
    \abs*{\frac{\partial}{\partial z}(z-g(x, z))}>\delta
  \end{equation}
  for $(x, z)\in\Xi_{\varepsilon}^{(1, 3)}$.
\end{lemma}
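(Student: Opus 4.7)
The plan is to compute $\partial_z(z - g(x,z))$ in closed form by implicit differentiation of \eqref{eq:implicit-equation-for-g}, and then use the already-established Lipschitz estimate from Lemma~\ref{lemma:f-prime-less-than-1} together with the identity $g(x,0)=0$ from \eqref{eq:g-when-z-is-zero} to bound the resulting quotient away from zero.

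Differentiating \eqref{eq:implicit-equation-for-g} with respect to $z$ gives
\begin{equation*}
\frac{\partial g}{\partial z}(x,z) = x\Phi'(g(x,z))\,\frac{\partial g}{\partial z}(x,z) - x\Phi'(z).
\end{equation*}
Since $g(x,z) \in \Xi_\varepsilon^{(2)}$ by Lemma~\ref{lemma:g-is-analytic}, Lemma~\ref{lemma:f-prime-less-than-1} applied at $y = g(x,z)$ gives $\abs{x\Phi'(g(x,z))}\le\lambda<1$. Thus $1-x\Phi'(g(x,z))\ne 0$ and solving for $\partial_z g$ yields
\begin{equation*}
\frac{\partial}{\partial z}\bigl(z-g(x,z)\bigr) = 1 + \frac{x\Phi'(z)}{1-x\Phi'(g(x,z))} = \frac{1 - x\Phi'(g(x,z)) + x\Phi'(z)}{1-x\Phi'(g(x,z))}.
\end{equation*}

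The denominator is trivially bounded above in modulus by $1+\lambda$ via the triangle inequality. For the numerator, the key observation is that $g(x,0)=0$ forces the two nontrivial terms to cancel at $z=0$: the numerator evaluated at $z=0$ equals $1 - x\Phi'(0) + x\Phi'(0) = 1$ identically in $x$. A standard uniform continuity argument then does the rest: fixing some $\varepsilon_0>0$ small enough that all earlier lemmas apply, the map $(x,z)\mapsto 1 - x\Phi'(g(x,z)) + x\Phi'(z)$ is continuous on the open set $\Xi_{\varepsilon_0}^{(1,3)}$ and equals $1$ on the slice $z=0$; restricting to the compact bidisk $\overline{D_{\rho+\varepsilon_0/2}(0)}\times\overline{D_{\varepsilon_0/2}(0)}$ and using uniform continuity there, one finds $\varepsilon\in(0,\varepsilon_0/2)$ such that the modulus of the numerator exceeds $\tfrac12$ throughout $\Xi_\varepsilon^{(1,3)}$. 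Combining the two bounds, \eqref{eq:derivative-estimate} holds with $\delta = 1/(2(1+\lambda))$.

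The only mildly delicate point is the uniformity in $x$ in the continuity step, but this follows at once from the boundedness of $\partial_z g$ established in Lemma~\ref{lemma:g-is-analytic} together with continuity of $\Phi'$ on the bounded region under consideration; no new ingredient beyond the lemmas already proved in this section is needed.
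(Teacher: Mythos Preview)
Your proof is correct and follows essentially the same approach as the paper: both compute the closed form via implicit differentiation, arriving at the quotient with numerator $1+x(\Phi'(z)-\Phi'(g(x,z)))$ and denominator $1-x\Phi'(g(x,z))$, and both exploit $g(x,0)=0$ to see that the numerator equals $1$ on the slice $z=0$. The only difference is packaging: the paper carries out explicit $O(\varepsilon)$ estimates (showing $g(x,z)=O(\varepsilon)$ and hence $\Phi'(z)-\Phi'(g(x,z))=O(\varepsilon)$) to obtain the lower bound $\frac{1}{1+\rho\Phi'(0)}+O(\varepsilon)$ and thus $\delta=\frac{1}{2(1+\rho\Phi'(0))}$, whereas you invoke uniform continuity on a compact bidisk to get the numerator above $\tfrac12$ and arrive at $\delta=\frac{1}{2(1+\lambda)}$. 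Both constants are perfectly adequate; neither argument requires any idea not already present in the other.
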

\begin{proof}
  To compute $\frac{\partial}{\partial z} g(x, z)$, we
  differentiate~\eqref{eq:implicit-equation-for-g} with respect to $z$ and
  obtain
  \begin{equation*}
	\frac{\partial}{\partial z} g(x, z) = x\Phi'(g(x,z))\frac{\partial}{\partial z} g(x, z) - x\Phi'(z),
\end{equation*}
  which leads to
\begin{equation*}
\frac{\partial}{\partial z} g(x, z) = - \frac{x\Phi'(z)}{1-x\Phi'(g(x,
      z))}.
\end{equation*}
  Note that the denominator is nonzero for $(x, z)\in\Xi_{\varepsilon}^{(1,
    3)}$ by Lemma~\ref{lemma:f-prime-less-than-1}.
  We obtain
  \begin{equation}\label{eq:derivative-implicit-equation-2}
    \abs*{\frac{\partial}{\partial z}(z-g(x, z))} = \abs*{\frac{1+x(\Phi'(z)-\Phi'(g(x,
      z)))}{1-x\Phi'(g(x, z))}}\ge \frac{1-(\rho+\varepsilon)\abs{\Phi'(z)-\Phi'(g(x,
      z))}}{1+(\rho+\varepsilon)\abs{\Phi'(g(x, z))}}.
  \end{equation}

  By Lemma~\ref{lemma:g-is-analytic}, $\frac{\partial g(x, z)}{\partial z}$ is analytic and 
bounded for $(x,  z)\in\Xi_\varepsilon^{(1, 3)}$, and by~\eqref{eq:g-when-z-is-zero}, it follows that 
  \begin{equation*}
    g(x, z)=g(x, z)-g(x, 0)=\int_{[0, z]}\frac{\partial g(x, \zeta)}{\partial\zeta}\,d\zeta=O(\abs{z})=O(\varepsilon)
  \end{equation*}
  for $\varepsilon\to 0$, uniformly in $x$. Therefore, we have
  \begin{equation*}
\Phi'(z)-\Phi'(g(x, z)) = (\Phi'(z)-\Phi'(0))-(\Phi'(g(x,z))-\Phi'(0)) = O(\varepsilon)
  \end{equation*}
  and $\abs{\Phi'(g(x, z))}= \Phi'(0)+O(\varepsilon)$ for $\varepsilon\to 0$.
  So~\eqref{eq:derivative-implicit-equation-2} yields
  \begin{equation*}
    \abs*{\frac{\partial}{\partial z}(z-g(x,
      z))}\ge \frac{1-(\rho+\varepsilon)O(\varepsilon)}{1+(\rho+\varepsilon)(\Phi'(0)+O(\varepsilon))}
    = \frac{1}{1+\rho\Phi'(0)}+O(\varepsilon)
  \end{equation*}
  for $\varepsilon\to 0$.  Setting
  $\delta\coloneqq \frac12 \frac{1}{1+\rho\Phi'(0)}$ and choosing $\varepsilon$
  small enough yields the result.
\end{proof}

We need bounds for $z$ such that
we remain in the region where our previous results hold. In
fact,~\eqref{eq:g-when-z-is-zero} shows that $z=0$ would be a solution when the summand $\Lambda_h$
(which is $O(\lambda^h)$) is removed from the implicit equation, so we expect that
the summand $\Lambda_h$ does not perturb $z$ too much. This is shown in the
following lemma.

\begin{lemma}\label{lem:z-is-small}
  Let  $\varepsilon>0$ be sufficiently small and $(x, y, z)\in\Xi_\varepsilon$
  such that~\eqref{eq:implicit-equation-2} holds. Then
  \begin{equation}\label{eq:z-is-small}
    z=O(\lambda^h).
  \end{equation}
\end{lemma}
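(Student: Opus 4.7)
The plan is to combine~\eqref{eq:implicit-equation-2} with the uniform bound~\eqref{eq:bound-for-Lambda}, which gives $z - g(x, z) = \Lambda_h(x, y, z) = O(\lambda^h)$ on $\Xi_\varepsilon$, and then to factor out $z$ from the left-hand side while showing that the remaining factor has modulus bounded below. Solving for $z$ then yields the desired estimate.

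To carry this out, since $g(x, 0) = 0$ by~\eqref{eq:g-when-z-is-zero}, the fundamental theorem of calculus along the segment from $0$ to $z$ gives
\[
z - g(x, z) \;=\; z \cdot M(x, z), \qquad M(x, z) := \int_0^1 \frac{\partial}{\partial\zeta}\bigl(\zeta - g(x, \zeta)\bigr)\bigg|_{\zeta = tz}\, dt.
\]
The formula for $\frac{\partial g}{\partial z}$ derived in the proof of Lemma~\ref{lemma:derivative-z-minus-g-x-z-bounded-away}, combined with $g(x, 0) = 0$, immediately yields $M(x, 0) = 1/(1 - x\Phi'(0))$. Since $\rho\Phi'(0) < \rho\Phi'(\tau) = 1$ by~\eqref{eq:fundamental-connection-between-greek-variables} and the strict monotonicity of $\Phi'$ (using $\tau > 0$ and $w_j > 0$ for some $j \geq 2$), the quantity $|M(x, 0)|$ is uniformly bounded below by a positive constant for $x \in \Xi_\varepsilon^{(1)}$. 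By analyticity and the resulting boundedness of $\partial^2 g/\partial z^2$ on slightly smaller closed polydiscs, one has $|M(x, z) - M(x, 0)| = O(|z|) = O(\varepsilon)$ uniformly in $x$, so after possibly shrinking $\varepsilon$ one obtains $|M(x, z)| \geq \delta'$ for some fixed $\delta' > 0$ on all of $\Xi_\varepsilon^{(1, 3)}$. Combining then gives
$\delta' |z| \leq |M(x, z)| \cdot |z| = |z - g(x, z)| = |\Lambda_h(x, y, z)| = O(\lambda^h)$,
establishing~\eqref{eq:z-is-small}.

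The main subtlety lies in the uniform lower bound on $|M(x, z)|$. Note that the pointwise bound $|\partial_\zeta(\zeta - g(x, \zeta))| > \delta$ from Lemma~\ref{lemma:derivative-z-minus-g-x-z-bounded-away} does \emph{not} automatically carry over to the integral average $M(x, z)$, since complex-valued integrands whose argument varies along the path of integration can cancel. The workaround above is to control $M$ instead via its value at $z = 0$ together with a continuity estimate, which is why one exploits $g(x, 0) = 0$ rather than merely the derivative bound of the previous lemma.
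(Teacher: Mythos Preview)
Your proof is correct, but it takes a genuinely different route from the paper's. You factor $z - g(x,z) = z \cdot M(x,z)$ via the fundamental theorem of calculus and then establish a uniform lower bound on $|M(x,z)|$; you are right that this requires care, since the pointwise bound from Lemma~\ref{lemma:derivative-z-minus-g-x-z-bounded-away} does not pass directly to the integral average, and your workaround via continuity at $z=0$ is sound.

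The paper instead avoids the factorisation entirely. After observing $g(x,z)-z=O(\lambda^h)$, it returns to the defining implicit equation $g(x,z)=x(\Phi(g(x,z))-\Phi(z))$ and writes the right-hand side as $x\int_{[z,g(x,z)]}\Phi'(\zeta)\,d\zeta=O(|g(x,z)-z|)=O(\lambda^h)$, giving $g(x,z)=O(\lambda^h)$ directly. Then $z=g(x,z)-(g(x,z)-z)=O(\lambda^h)$ by the triangle inequality. This is shorter and sidesteps the lower-bound subtlety you flagged: only an \emph{upper} bound on $|\Phi'|$ along the segment is needed, and that is immediate from analyticity on the relevant disk. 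Your approach, by contrast, leans on the differential structure of $g$ (via $\partial g/\partial z$) rather than on its defining functional equation; it works, but at the cost of the extra continuity argument for $M$.
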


\begin{proof}
  In view of~\eqref{eq:implicit-equation-2}
  and~\eqref{eq:bound-for-Lambda}, we have
  \begin{equation}\label{eq:z-is-small-1}
    g(x, z) - z = O(\lambda^h).
  \end{equation}
  By definition, $g(x, z)\in\Xi_{\varepsilon}^{(2)}$. The implicit equation~\eqref{eq:implicit-equation-for-g} for $g(x,z)$ and~\eqref{eq:z-is-small-1} imply
  \begin{equation*}
    g(x, z) = x(\Phi(g(x, z)) - \Phi(z))=x \int_{[z, g(x,
      z)]}\Phi'(\zeta)\,d\zeta=O(\abs{g(x, z)-z})=O(\lambda^h).
  \end{equation*}
  Inserting this into~\eqref{eq:z-is-small-1} leads to~\eqref{eq:z-is-small}.
\end{proof}

\begin{lemma}\label{lem:q_h-unique}
  There exists an $\varepsilon>0$ such that for sufficiently large $h$, there is a unique analytic function $q_h\colon \Xi^{(1,
    2)}_\varepsilon\to \C$ such that
  \begin{equation}\label{eq:functional-equation-for-q}
    q_h(x, y)=f_{x, q_h(x,y)}^{(h-1)}(y)
  \end{equation}
  and $q_h(x, 0)=0$ for $(x, y)\in\Xi_{\varepsilon}^{(1, 2)}$; furthermore,
  $q_h(x, y)=O(\lambda^h)$ holds uniformly in $x$ and $y$.
\end{lemma}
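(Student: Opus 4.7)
The plan is to recast \eqref{eq:functional-equation-for-q} as the implicit equation $F_h(x, y, q_h(x, y)) = 0$ with $F_h(x, y, z) \coloneqq z - g(x, z) - \Lambda_h(x, y, z)$ (which is precisely \eqref{eq:implicit-equation-2}) and then to invoke the analytic implicit function theorem. First I would verify the base point: since $g(x, 0) = 0$ by \eqref{eq:g-when-z-is-zero} and $f_{x, 0}(y) = x(\Phi(y) - 1)$ fixes $y = 0$, iteration gives $f_{x, 0}^{(h-1)}(0) = 0$, so $\Lambda_h(x, 0, 0) = 0$ and $F_h(x, 0, 0) = 0$ for every $x \in \Xi_\varepsilon^{(1)}$. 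Next, combining Lemma~\ref{lemma:derivative-z-minus-g-x-z-bounded-away} with the bound \eqref{eq:bound-for-derivative-of-Lambda} yields $\bigl|\tfrac{\partial F_h}{\partial z}(x, y, z)\bigr| \geq \delta - O(\lambda^h) \geq \delta/2$ uniformly on $\Xi_\varepsilon$ for all sufficiently large $h$, so the analytic implicit function theorem applies near every $(x_0, 0, 0)$ with $x_0 \in \Xi_\varepsilon^{(1)}$ and, by local uniqueness, produces an analytic function $q_h$ on an open neighborhood $U_h \subset \Xi_\varepsilon^{(1, 2)}$ of $\Xi_\varepsilon^{(1)} \times \{0\}$ with $q_h(x, 0) = 0$.

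The main obstacle will be to extend $q_h$ from $U_h$ to all of $\Xi_\varepsilon^{(1, 2)}$ while keeping the values inside $\Xi_\varepsilon^{(3)}$, which is the region where all our estimates (in particular the nonvanishing of $\partial_z F_h$) have been proved. Here Lemma~\ref{lem:z-is-small} is crucial: any solution in $\Xi_\varepsilon^{(3)}$ is automatically $O(\lambda^h)$, hence deep inside $\Xi_\varepsilon^{(3)}$ once $h$ is large. I would let $S_h \subset \Xi_\varepsilon^{(1, 2)}$ denote the set of points to which $q_h$ extends analytically on a neighborhood with values in $\Xi_\varepsilon^{(3)}$. By construction $S_h$ is open and contains $U_h$, so is nonempty. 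To establish closedness in $\Xi_\varepsilon^{(1, 2)}$, take a sequence $(x_n, y_n) \in S_h$ converging to $(x, y)$: Lemma~\ref{lem:z-is-small} forces $|q_h(x_n, y_n)| \leq C \lambda^h$, so a subsequence $q_h(x_{n_k}, y_{n_k}) \to z_\ast$ with $|z_\ast| \leq C \lambda^h < \varepsilon/2$, and $F_h(x, y, z_\ast) = 0$ by continuity; applying the implicit function theorem at $(x, y, z_\ast)$ yields a local analytic solution $\tilde q_h$ near $(x, y)$ with values close to $z_\ast$. By local uniqueness $\tilde q_h$ agrees with $q_h$ on the overlap with the neighborhoods of the $(x_{n_k}, y_{n_k})$, so $\tilde q_h$ extends $q_h$ past $(x, y)$ while staying in $\Xi_\varepsilon^{(3)}$, proving $(x, y) \in S_h$. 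Since $\Xi_\varepsilon^{(1, 2)}$ is a product of disks and hence connected, $S_h = \Xi_\varepsilon^{(1, 2)}$.

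The uniform estimate $q_h(x, y) = O(\lambda^h)$ is then immediate from Lemma~\ref{lem:z-is-small}. Uniqueness under the normalization $q_h(x, 0) = 0$ follows because any competing analytic solution must agree with $q_h$ locally on $\Xi_\varepsilon^{(1)} \times \{0\}$ by local uniqueness in the implicit function theorem, and then globally by analytic continuation on the connected set $\Xi_\varepsilon^{(1, 2)}$. The delicate step is the global extension in the second paragraph, which relies crucially both on the uniform nonvanishing of $\partial_z F_h$ throughout $\Xi_\varepsilon$ and on the a priori smallness from Lemma~\ref{lem:z-is-small}; all other ingredients are either already in hand from the preceding lemmas or are direct applications of the implicit function theorem.
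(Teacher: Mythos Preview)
Your proposal is correct and follows essentially the same strategy as the paper: verify the base point $q_h(x,0)=0$, use Lemma~\ref{lemma:derivative-z-minus-g-x-z-bounded-away} together with~\eqref{eq:bound-for-derivative-of-Lambda} to make $\partial_z F_h$ uniformly nonzero on $\Xi_\varepsilon$, apply the analytic implicit function theorem locally, and then propagate the solution across $\Xi_\varepsilon^{(1,2)}$ using the a~priori bound from Lemma~\ref{lem:z-is-small} to keep the values inside $\Xi_\varepsilon^{(3)}$. The only organisational difference is that the paper fixes $x_0$ and runs a supremum-of-radii argument in the $y$-variable, whereas you phrase the extension as an open--closed argument on the connected set $\Xi_\varepsilon^{(1,2)}$; these are equivalent, and all the substantive ingredients coincide.
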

\begin{proof}
  We
  choose $h$ sufficiently large such that
  \eqref{eq:bound-for-derivative-of-Lambda} implies
  \begin{equation}\label{eq:bound-for-derivative-of-Lambda-for-large-h}
    \abs[\Big]{\frac{\partial}{\partial z}\Lambda_h(x, y, z)}\le \frac{\delta}{2}
  \end{equation}
  for $(x, y, z)\in \Xi_\varepsilon$, where $\delta$ is taken as in
  Lemma~\ref{lemma:derivative-z-minus-g-x-z-bounded-away}, and such that~\eqref{eq:z-is-small} implies
  \begin{equation}\label{eq:bound-for-z-for-large-h}
    \abs{z}\le \frac{\varepsilon}{2}
  \end{equation}
  for all $(x, y, z)\in \Xi_\varepsilon$ for which~\eqref{eq:implicit-equation-2} holds.

  By definition of $f$, we have $f_{x, 0}(0) = 0$ and therefore
  $f_{x, 0}^{(h-1)}(0) = 0$ for every $x \in
  \Xi_\varepsilon^{(1)}$, so $z=0$ is a solution of~\eqref{eq:implicit-equation-1} for $y=0$.
  By~\eqref{eq:derivative-estimate}
  and~\eqref{eq:bound-for-derivative-of-Lambda-for-large-h}, we have
  \begin{equation}\label{eq:derivative-does-not-vanish}
    \frac{\partial}{\partial z} (f_{x,z}^{(h-1)}(y) - z)\neq 0
  \end{equation}
  for $(x,y,z) \in \Xi_\varepsilon$.
  The analytic implicit function theorem thus
  implies that, for every $x \in \Xi_\varepsilon^{(1)}$, there is an
  analytic function $q_h$ defined in a neighbourhood of $(x, 0)$ such
  that~\eqref{eq:functional-equation-for-q} holds there and such that
  $q_h(x, 0)=0$. Next we show that this extends to the whole region
  $\Xi_\varepsilon^{(1,2)}$.

  For $x_0 \in \Xi_\varepsilon^{(1)}$, let $r(x_0)$ be the supremum of all
  $r < \tau-\rho+\varepsilon$ for which there is an analytic extension of
  $y\mapsto q_h(x_0,y)$ from the open disk $D_r(0)$ to $\Xi_\varepsilon^{(3)}$. Suppose
  for contradiction that $r(x_0) < \tau - \rho + \varepsilon$. Consider a point
  $y_0$ with $\abs{y_0} = r(x_0)$, and take a sequence $y_n \to y_0$ such that
  $\abs{y_n} < r(x_0)$. Note that $\abs{q_h(x_0,y_n)} \leq \frac{\varepsilon}{2}$
  by~\eqref{eq:bound-for-z-for-large-h}. Without loss of generality, we can
  assume that $q_h(x_0,y_n)$ converges to some $q_0$ with $\abs{q_0} \leq \frac{\varepsilon}{2}$ as
  $n \to \infty$ (by compactness). By continuity, we have
  $q_0 = f_{x_0, q_0}^{(h-1)}(y_0)$. Since $(x_0,y_0,q_0) \in \Xi_\varepsilon$,
  we can still use the analytic implicit function theorem together
  with~\eqref{eq:derivative-does-not-vanish} to conclude that
  there is a neighbourhood of $(x_0,y_0,q_0)$ where the equation $f_{x, z}^{(h-1)}(y) = z$
  has exactly one solution $z$ for every $x$ and $y$, and an analytic
  function $\tilde{q}_h(x,y)$ such that
  $\tilde{q}_h(x,y) = f_{x, \tilde{q}_h(x,y)}^{(h-1)}(y)$ and
  $\tilde{q}_h(x_0,y_0) = q_0$. 
  We assume the neighbourhood to be chosen small enough such that 
  $\tilde{q}_h(x, y)\in\Xi_\varepsilon^{(3)}$ for all $(x, y)$ 
  in the neighbourhood.
  For large enough $n$, this neighbourhood
  contains $(x_0,y_n,q_h(x_0,y_n))$, so we must have
  $q_h(x_0,y_n) = \tilde{q}_h(x_0,y_n)$ for all those $n$. This implies that
  $\tilde{q}_h$ is an analytic continuation of $q_h$ in a neighbourhood of
  $(x_0, y_0)$ with values in $\Xi_\varepsilon^{(3)}$. 
  Since $y_0$ was arbitrary, we have reached the desired contradiction.

  So we conclude that there is indeed such an analytic function $q_h$ defined
  on all of $\Xi_\varepsilon^{(1,2)}$, with values in
  $\Xi_\varepsilon^{(3)}$. The fact that $q_h(x,y) = O(\lambda^h)$ finally
  follows from Lemma~\ref{lem:z-is-small}.
\end{proof}

\subsection{Location of the dominant singularity}

Let us summarise what has been proven so far. By~\eqref{eq:short-system-1} and
Lemma~\ref{lem:q_h-unique}, for sufficiently large $h$ we can express $Y_{h,h}$
in terms of $Y_{h,1}$ as
\begin{equation*}
Y_{h,h}(x) = q_h(x,Y_{h,1}(x))
\end{equation*}
at least in a neighbourhood of $0$, which we can plug into~\eqref{eq:short-system-2} to get
\begin{equation*}
Y_{h, 1}(x) = x \big(\Phi(Y_{h, 1}(x)+x)-\Phi(q_h(x,Y_{h,1}(x)))\big).
\end{equation*}
Setting
\[F_h(x,y) = x(\Phi(y+x) - \Phi(q_h(x,y))),\]
this can be rewritten as
\[ Y_{h,1}(x) = F_h(x,Y_{h,1}(x)).\]
The
function $F_h$ is analytic on
$\Xi_\varepsilon^{(1,2)}$ by Lemma~\ref{lem:q_h-unique} and the fact that 
$\Phi$ is analytic for these arguments. Note also that
\begin{equation*}
\lim_{h \to \infty} F_h(x,y) = x \big(\Phi(y+x)-1\big) \eqqcolon F_{\infty}(x,y)
\end{equation*}
pointwise for $(x, y)\in\Xi_{\varepsilon}^{(1, 2)}$.
By the estimate on $q_h$ in Lemma~\ref{lem:q_h-unique}, we also have 
\begin{equation}\label{eq:Fh_approx}
F_h(x,y) = F_{\infty}(x,y) + O(\lambda^h),
\end{equation}
uniformly for $(x,y) \in \Xi_\varepsilon^{(1,2)}$. Using the same argument as
in Lemma~\ref{lem:Lambda-small}, we can also assume (redefining $\varepsilon$
if necessary) that
\begin{equation}\label{eq:pd_approx}
\frac{\partial}{\partial y} F_h(x,y) = \frac{\partial}{\partial y} F_{\infty}(x,y) + O(\lambda^h)
\end{equation}
and analogous estimates for any finite number of partial derivatives hold as
well. Having reduced the original system of equations to a single equation for
$Y_{h,1}(x)$, we now deduce properties of its dominant singularity. Since
$Y_{h,1}(x)$ has a power series with nonnegative coefficients, by Pringsheim's
theorem it must have a dominant positive real singularity that we denote by
$\rho_h$. Since the coefficients of $Y_{h,1}(x)$ are bounded above by those of
$Y(x)$, we also know that $\rho_h \geq \rho$.

\begin{lemma}\label{lemma:singularity-location}
  For every sufficiently large $h$, $\rho_h \leq \rho +
  \lambda^{h/2}$. Moreover,
  $\eta_{h,1} := Y_{h,1}(\rho_h) = \tau - \rho + O(\lambda^{h/2})$.
\end{lemma}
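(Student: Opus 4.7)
The plan is to locate the dominant singularity of $Y_{h,1}$ by comparing the implicit equation $y = F_h(x, y)$ with its limit $y = F_\infty(x, y)$, exploiting the uniform estimate $F_h = F_\infty + O(\lambda^h)$ from~\eqref{eq:Fh_approx}. Via the substitution $Y = y + x$, the limiting equation $y = F_\infty(x, y)$ is precisely $Y = x\Phi(Y)$, whose classical Meir--Moon dominant singularity at $(x, Y) = (\rho, \tau)$ corresponds to $(x, y) = (\rho, \tau - \rho)$.

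For the upper bound on $\rho_h$, I would study the real function $H_h(x, y) := y - F_h(x, y)$ and its limit $H_\infty(x, y) = (y+x) - x\Phi(y+x)$. Since $\partial_{yy} H_\infty = -x\Phi''(y+x) < 0$, the function $H_\infty(x, \cdot)$ is concave, with $y$-maximum attained where $x\Phi'(y+x) = 1$; by the envelope theorem this maximum equals $-\Phi(\tau)(x - \rho) + O((x - \rho)^2)$ as $x \to \rho^+$. Combining with the uniform estimate~\eqref{eq:Fh_approx} yields
\begin{equation*}
  \max_{y} H_h(x, y) \le -\Phi(\tau)(x - \rho) + C \lambda^h
\end{equation*}
for some constant $C > 0$, uniformly in $h$ and in real $x$ close to $\rho$. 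Hence for real $x > \rho + (C/\Phi(\tau))\lambda^h$, the equation $y = F_h(x, y)$ admits no real solution with $y+x$ near $\tau$. By Pringsheim's theorem $\rho_h$ is real and positive, and since $Y_{h,1}$ has nonnegative coefficients it must deliver a real solution of $y = F_h(x,y)$ for every $x \in [0, \rho_h)$; this forces $\rho_h \le \rho + O(\lambda^h) \le \rho + \lambda^{h/2}$ for sufficiently large $h$.

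For $\eta_{h,1}$, I would use the standard fact that at such a singularity $\eta_{h,1} = \lim_{x \to \rho_h^-} Y_{h,1}(x)$ is a real double root of $y - F_h(\rho_h, y) = 0$; in particular $\partial_y F_h(\rho_h, \eta_{h,1}) = 1$. The derivative estimate~\eqref{eq:pd_approx} then gives $\rho_h \Phi'(\eta_{h,1} + \rho_h) = 1 + O(\lambda^h)$, and since $\Phi'$ is analytic and strictly increasing near $\tau$ with $\rho \Phi'(\tau) = 1$, this forces $\eta_{h,1} + \rho_h = \tau + O(\lambda^h)$. Combined with the bound $\rho_h = \rho + O(\lambda^h)$, this yields $\eta_{h,1} = \tau - \rho + O(\lambda^h)$, which is certainly $\tau - \rho + O(\lambda^{h/2})$. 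The main obstacle I anticipate is justifying rigorously that $\eta_{h,1}$ realises a double root: one has to use monotonicity of $Y_{h,1}$ on $[0, \rho_h)$ together with the coefficient-wise bound $Y_{h,1}(x) \le Y(x) - x$ to show that $\eta_{h,1}$ exists in $\Xi_\varepsilon^{(2)}$, then combine the concavity of $H_h(\rho_h, \cdot)$ with the failure of the analytic implicit function theorem at $(\rho_h, \eta_{h,1})$ to conclude both $\partial_y F_h(\rho_h, \eta_{h,1}) = 1$ and the uniqueness of this root.
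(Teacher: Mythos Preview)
Your approach is very close to the paper's in spirit: both compare $F_h$ with $F_\infty$ via the $O(\lambda^h)$ estimates and then read off the singularity from the failure of the implicit function theorem. Your envelope-theorem formulation of the upper bound is essentially a cleaner packaging of the paper's tangent-line inequality $u \le \rho\Phi(u)$, and your argument for $\eta_{h,1}$ (use $\partial_y F_h(\rho_h,\eta_{h,1})=1$, hence $\rho_h\Phi'(\eta_{h,1}+\rho_h)=1+O(\lambda^h)$, then invert $\Phi'$) is exactly what the paper does in its last paragraph.

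There is, however, one genuine gap that affects \emph{both} halves of your argument. All of the uniform estimates \eqref{eq:Fh_approx} and \eqref{eq:pd_approx}, and indeed the very definition of $F_h$ through $q_h$, are only valid on $\Xi_\varepsilon^{(1,2)}$. Your envelope bound therefore only shows that $H_h(x,\cdot)$ has no zero \emph{inside} $\Xi_\varepsilon^{(2)}$ for $x>\rho + C'\lambda^h$; to turn this into $\rho_h \le \rho + C'\lambda^h$ you must know a priori that $Y_{h,1}(x)$ stays in $\Xi_\varepsilon^{(2)}$ for all real $x<\rho_h$ (or at least up to $\rho+\varepsilon/2$). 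Your proposed fix, the coefficient-wise bound $Y_{h,1}(x)\le Y(x)-x$, only gives control for $x\le\rho$, since $Y$ itself is singular at $\rho$; it says nothing on $(\rho,\rho_h)$, which is precisely the interval that matters. The paper closes this gap with an argument you did not anticipate: it first shows that if $Y_{h,1}$ ever reached the level $\tau-\rho+\varepsilon/2$ for some $x<\min(\rho_h,\rho+\varepsilon/2)$, then by the intermediate value theorem applied to $\partial_y F_h(x,Y_{h,1}(x))$ there would already be an earlier point $x_0$ where $\partial_y F_h(x_0,Y_{h,1}(x_0))=1$, forcing a square-root singularity strictly before $\rho_h$ --- a contradiction. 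Only after establishing this a priori confinement does the comparison with $F_\infty$ deliver the bound on $\rho_h$ and the location of $\eta_{h,1}$. Once you plug in this confinement step, the rest of your outline goes through.
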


\begin{proof}
  Note first that $Y_{h,1}(x)$ is
  an increasing function of $x$ for positive real $x < \rho_h$. Let
  $\tilde{\rho} = \min(\rho_h,\rho + \frac{\varepsilon}{2})$. Suppose first
  that
  $\lim_{x \to \tilde{\rho}^-} Y_{h,1}(x) \geq \tau - \rho +
  \frac{\varepsilon}{2}$. If $h$ is large enough, this implies together
  with~\eqref{eq:pd_approx} that
\begin{align*}
\lim_{x \to \tilde{\rho}^-}  \frac{\partial F_h}{\partial y} (x,Y_{h,1}(x)) &= \lim_{x \to \tilde{\rho}^-}  \frac{\partial F_{\infty}}{\partial y} (x,Y_{h,1}(x)) + O(\lambda^h) \\
&\geq \frac{\partial F_{\infty}}{\partial y} \Big(\rho,\tau -\rho + \frac{\varepsilon}{2}\Big) + O(\lambda^h) \\
&= \rho \Phi'\Big( \tau + \frac{\varepsilon}{2}\Big) + O(\lambda^h) > \rho \Phi'(\tau) = 1.
\end{align*}
On the other hand, we also have
\begin{align*}
\frac{\partial F_h}{\partial y} (\rho/2,Y_{h,1}(\rho/2)) &= \frac{\partial F_{\infty}}{\partial y} (\rho/2,Y_{h,1}(\rho/2)) + O(\lambda^h) \\
&\leq \frac{\partial F_{\infty}}{\partial y} (\rho/2,Y(\rho/2)-\rho/2) + O(\lambda^h) \\
&<  \rho \Phi'(\tau) = 1,
\end{align*}
so by continuity there must exist some $x_0 \in (\rho/2, \tilde{\rho})$ such that
\begin{equation*}
\frac{\partial F_h}{\partial y} (x_0,Y_{h,1}(x_0)) = 1.
\end{equation*}
Moreover, if $h$ is large enough we have
\begin{equation*}
\frac{\partial^2 F_h}{\partial y^2} (x_0,Y_{h,1}(x_0)) = \frac{\partial^2 F_{\infty}}{\partial y^2} (x_0,Y_{h,1}(x_0)) + O(\lambda^h) > 0
\end{equation*}
as $x_0$ and thus also $Y_{h,1}(x_0)$ are bounded below by positive constants,
and analogously $\frac{\partial F_h}{\partial x} (x_0,Y_{h,1}(x_0)) > 0$. But
this would mean that $Y_{h,1}$ has a square root singularity at $x_0 < \rho_h$
(compare the discussion in Section~\ref{sec:asy-area} later), and we reach a
contradiction. Hence we can assume that
\begin{equation}\label{eq:at_rho_tilde}
\lim_{x \to \tilde{\rho}^-} Y_{h,1}(x) < \tau - \rho + \frac{\varepsilon}{2}.
\end{equation}
Assume next that $\rho_h > \rho + \lambda^{h/2}$. Now for
$x_1 = \rho + \lambda^{h/2} < \tilde{\rho}$ (the inequality holds if $h$ is
large enough to make $\lambda^{h/2} < \frac{\varepsilon}{2}$),
$u_1 = Y_{h,1}(x_1) + x_1$ satisfies
\begin{equation}\label{eq:aux_eq_rhoh-est}
u_1 = x_1 \Phi(u_1) + O(\lambda^h),
\end{equation}
since
$F_h(x,y) = F_{\infty}(x,y) + O(\lambda^h) = x (\Phi(y+x)-1) +
O(\lambda^h)$. Note here that
$u_1 \leq \tau + \frac{\varepsilon}{2} + \lambda^{h/2}$
by~\eqref{eq:at_rho_tilde}, thus $u_1$ is in the region of analyticity of
$\Phi$ (again assuming $h$ to be large enough). However, since
$u \leq \rho \Phi(u)$ for all positive real $u$ for which $\Phi(u)$ is well-defined (the
line $u \mapsto \frac{u}{\rho}$ is a tangent to the graph of the convex
function $\Phi$ at $\tau$), for sufficiently large $h$ the right-hand side
in~\eqref{eq:aux_eq_rhoh-est} is necessarily greater than the left, and we
reach another contradiction. So it follows that
$\rho_h \leq \rho + \lambda^{h/2}$, and in particular
$\tilde{\rho} = \rho_h < \rho + \frac{\varepsilon}{2}$ if $h$ is large
enough. Since we know that
$\lim_{x \to \tilde{\rho}^-} Y_{h,1}(x) < \tau - \rho + \frac{\varepsilon}{2}$,
we also have
$\eta_{h,1} := Y_{h,1}(\rho_h) < \tau-\rho + \frac{\varepsilon}{2}$. We
conclude that $(\rho_h,\eta_{h,1}) \in \Xi_{\varepsilon}^{(1,2)}$, i.e.,
$(\rho_h,\eta_{h,1})$ lies within the region of analyticity of $F_h$. So the
singularity at $\rho_h$ must be due to the implicit function theorem failing at
this point:
\begin{equation*}
\eta_{h,1} = F_h(\rho_h,\eta_{h,1}) \text{ and } 1 = \frac{\partial F_h}{\partial y} (\rho_h,\eta_{h,1}).
\end{equation*}
The second equation in particular gives us
\begin{equation*}
\rho_h \Phi'(\eta_{h,1} + \rho_h) = 1 + O(\lambda^h)
\end{equation*}
by~\eqref{eq:pd_approx}. Since $\Phi'$ is increasing for positive real arguments and we know that $\rho \Phi'(\tau) = 1$ and $\rho_h = \rho + O(\lambda^{h/2})$, we can conclude from this that $\eta_{h,1} = \tau - \rho + O(\lambda^{h/2})$.
\end{proof}

As we have established that $\eta_{h,1} \to \tau - \rho$ as $h \to \infty$, we
will use the abbreviation $\eta_1 := \tau - \rho$ in the following. This will
later be generalised to $\eta_{h,k} := Y_{h,k}(\rho_h) \to \eta_k$, see
Sections~\ref{sec:case-exp} and~\ref{sec:case-double-exp}. For our next step,
we need a multidimensional generalisation of Rouch\'e's theorem:

\begin{theorem}[{see \cite[p.20, Theorem 2.5]{Aizenberg-Yuzhakov:1983:integral}}]\label{thm:mult_rouche}
    Let $\Omega$ be a bounded domain in $\C^n$ whose boundary $\partial \Omega$ is
  piecewise smooth. Suppose that $u,v: \overline{\Omega} \to \C^n$ are analytic
  functions, and that the boundary of $\Omega$ does not contain any zeros of
  $u$. Moreover, assume that for every $z \in \partial \Omega$, there is at least
  one coordinate $j$ for which $\abs{u_j(z)} > \abs{v_j(z)}$ holds. Then $u$
  and $u+v$ have the same number of zeros in $\Omega$.
\end{theorem}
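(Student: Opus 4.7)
The plan is to prove this multidimensional Rouché-type theorem by combining the argument principle in several complex variables with a straight-line homotopy, in close analogy with the classical one-variable proof.

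First, I would invoke the multidimensional argument principle: for an analytic map $f=(f_1,\dots,f_n)\colon \overline{\Omega}\to\C^n$ with no zeros on $\partial\Omega$, the number of zeros of $f$ in $\Omega$, counted with multiplicities, can be expressed as a Bochner--Martinelli boundary integral, or equivalently as the topological degree of the normalized boundary map $f/\abs{f}\colon \partial\Omega\to S^{2n-1}$. This count is invariant under continuous deformations of $f$ that do not create zeros on $\partial\Omega$.

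Next, I would introduce the linear homotopy $f_t\coloneqq u+tv$ for $t\in[0,1]$ and verify that $f_t$ is nonvanishing on $\partial\Omega$ for every $t$. Given any $z\in\partial\Omega$, the hypothesis furnishes a coordinate $j=j(z)$ with $\abs{u_j(z)}>\abs{v_j(z)}$, so the reverse triangle inequality yields
\[
    \abs{u_j(z)+tv_j(z)} \;\geq\; \abs{u_j(z)}-t\abs{v_j(z)} \;\geq\; \abs{u_j(z)}-\abs{v_j(z)} \;>\; 0,
\]
whence $f_t(z)\neq 0$. Continuity of the topological degree (or of the Bochner--Martinelli integral) in the parameter $t$ then forces the zero count of $f_t$ to be constant on $[0,1]$, and comparing the values at $t=0$ and $t=1$ yields the claim.

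The main obstacle is the first step: the argument principle in several variables is substantially more delicate than its one-dimensional analogue, since zeros of a holomorphic map $\C^n\to\C^n$ carry intersection multiplicities whose correct definition requires either Bochner--Martinelli theory or a local-algebraic description, so a fully self-contained proof would be much longer than in dimension one. In a write-up I would therefore simply cite the argument principle from Aizenberg--Yuzhakov; alternatively, for a more elementary count one can first perturb $u$ to a generic nearby holomorphic map (without destroying the boundary inequality, which is an open condition) so that all zeros of $u$ and of $u+v$ in $\Omega$ become simple and contribute $\pm 1$ to the degree, and then apply homotopy invariance exactly as above.
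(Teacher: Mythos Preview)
The paper does not prove this theorem at all: it is quoted as Theorem~2.5 from Aizenberg--Yuzhakov and used as a black box in the proof of Lemma~\ref{lemma:unique-singularity}. So there is no ``paper's own proof'' to compare against.

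That said, your sketch is correct and is essentially the argument given in Aizenberg--Yuzhakov. The key point is exactly the one you isolate: the coordinate-wise hypothesis $\abs{u_j(z)}>\abs{v_j(z)}$ at each boundary point guarantees that the linear homotopy $u+tv$ has no zeros on $\partial\Omega$, after which homotopy invariance of the zero count (via the Bochner--Martinelli integral or, equivalently, the mapping degree of $f/\abs{f}\colon\partial\Omega\to S^{2n-1}$) finishes the proof. Your caveat about multiplicities is also well placed: in several variables one genuinely needs either the integral formula or a local-algebraic definition of intersection multiplicity, and this is where a self-contained write-up would spend most of its effort. The perturbation-to-simple-zeros alternative you mention works too, though one should note that holomorphicity must be preserved under the perturbation, which is not automatic for arbitrary small perturbations; in practice one perturbs within the space of holomorphic maps and uses that nondegenerate zeros form an open dense condition there.
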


\begin{lemma}\label{lemma:unique-singularity}
  If $\varepsilon$ is chosen sufficiently small and $h$ sufficiently large,
  then the pair $(\rho_h,\eta_{h,1})$ is the only solution to the simultaneous
  equations $F_h(x,y) = y$ and $\frac{\partial}{\partial y}F_h (x,y) = 1$ with
  $(x,y) \in \Xi_{\varepsilon}^{(1,2)}$.
\end{lemma}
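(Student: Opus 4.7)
The plan is to apply the multidimensional Rouch\'e theorem (Theorem~\ref{thm:mult_rouche}) on the bounded domain $\Omega = \Xi_\varepsilon^{(1,2)}$ with
\[
u(x,y) = \Bigl(F_\infty(x,y) - y,\; \tfrac{\partial}{\partial y}F_\infty(x,y) - 1\Bigr), \quad
v(x,y) = \Bigl(F_h(x,y) - F_\infty(x,y),\; \tfrac{\partial}{\partial y}(F_h - F_\infty)(x,y)\Bigr).
\]
Then $u+v$ has components $\bigl(F_h(x,y) - y,\; \tfrac{\partial}{\partial y}F_h(x,y) - 1\bigr)$, whose zeros are precisely the pairs in the statement of the lemma. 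By~\eqref{eq:Fh_approx}--\eqref{eq:pd_approx}, $v = O(\lambda^h)$ uniformly on $\overline{\Xi_\varepsilon^{(1,2)}}$, so once I establish that $u$ has a unique simple zero at $(\rho,\eta_1)$ in $\Xi_\varepsilon^{(1,2)}$ and a positive lower bound on $\max(|u_1|,|u_2|)$ along the boundary, Rouch\'e will force $u+v$ to have a unique zero in $\Xi_\varepsilon^{(1,2)}$ for large $h$. Since Lemma~\ref{lemma:singularity-location} places $(\rho_h,\eta_{h,1})$ in $\Xi_\varepsilon^{(1,2)}$ and the discussion preceding the lemma shows that $(\rho_h,\eta_{h,1})$ solves both equations $F_h(x,y) = y$ and $\tfrac{\partial}{\partial y}F_h(x,y) = 1$, this unique zero will have to coincide with $(\rho_h,\eta_{h,1})$.

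For the zero structure of $u$, I would use the shear $u' = y+x$, under which the equations $F_\infty(x,y) = y$ and $\tfrac{\partial}{\partial y}F_\infty(x,y) = 1$ translate into the classical characteristic system $u' = x\Phi(u')$ and $x\Phi'(u') = 1$ for the simply generated family. The aperiodicity of the weight sequence together with the standard fact quoted in Section~\ref{sec:sg_trees_basic} implies that, for $\varepsilon$ sufficiently small, $(\rho,\tau)$ is the only solution of this system in the closed polydisk $\{|x|\le\rho+\varepsilon,\,|u'|\le\tau+2\varepsilon\}$. Since $\overline{\Xi_\varepsilon^{(1,2)}}$ shears into a subset of this polydisk, the only zero of $u$ in $\overline{\Xi_\varepsilon^{(1,2)}}$ is $(\rho,\tau-\rho) = (\rho,\eta_1)$, which is interior. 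A direct Jacobian computation at this point, using $\rho\Phi'(\tau) = 1$ and $\tau = \rho\Phi(\tau)$, gives determinant $\tau\Phi''(\tau) > 0$ (positive because $w_j > 0$ for some $j\ge 2$), so the zero is simple.

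Since $(\rho,\eta_1)$ is interior to $\Xi_\varepsilon^{(1,2)}$ and $u$ is continuous with no zeros on the compact set $\partial\Xi_\varepsilon^{(1,2)}$, there exists a constant $c > 0$, independent of $h$, such that $\max(|u_1(x,y)|,|u_2(x,y)|) \ge c$ for all $(x,y) \in \partial\Xi_\varepsilon^{(1,2)}$. Combined with $v = O(\lambda^h)$ uniformly, this yields, for all sufficiently large $h$ and every boundary point, an index $j \in \{1,2\}$ with $|u_j| > |v_j|$, verifying the hypothesis of Theorem~\ref{thm:mult_rouche}. Counting zeros with multiplicity then gives exactly one zero of $u+v$ in $\Xi_\varepsilon^{(1,2)}$, which must be $(\rho_h,\eta_{h,1})$. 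The main obstacle is the uniqueness step for $u$: one must pick $\varepsilon$ small enough that the sheared image still sits inside the polydisk where the standard simply-generated-tree uniqueness applies while simultaneously keeping $(\rho,\eta_1)$ strictly interior, and both requirements are absorbed into the tacit convention of repeatedly shrinking $\varepsilon$ set up at the beginning of Section~\ref{sec:contraction}.
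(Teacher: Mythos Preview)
Your proposal is correct and follows essentially the same approach as the paper's own proof: both apply the multidimensional Rouch\'e theorem on $\Xi_\varepsilon^{(1,2)}$ with exactly the same choice of $u$ and $v$, use the uniqueness of $(\rho,\tau)$ for the limit system quoted in Section~\ref{sec:sg_trees_basic} to show $u$ has a single zero, and invoke the uniform $O(\lambda^h)$ bound on $v$ to conclude. You supply a bit more detail than the paper (the Jacobian check for simplicity of the zero, the compactness argument for the boundary lower bound, and the explicit identification of the zero with $(\rho_h,\eta_{h,1})$ via Lemma~\ref{lemma:singularity-location}), but the structure is identical.
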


\begin{proof}
  Note that $(\rho,\eta_1)$ is a solution to the simultaneous equations
  $F_{\infty}(x,y) = x(\Phi(x+y) - 1) = y$ and
  $\frac{\partial}{\partial y}F_{\infty} (x,y) = x \Phi'(x+y) = 1$, and that
  there is no other solution with $\abs{x} \leq \rho + \varepsilon$ and
  $\abs{y} \leq \eta_1 + \varepsilon$ if $\varepsilon$ is chosen sufficiently
  small by our assumptions on the function $\Phi$ (see
  Section~\ref{sec:sg_trees_basic}). We take $\Omega = \Xi_{\varepsilon}^{(1,2)}$ in
  Theorem~\ref{thm:mult_rouche} and set
\begin{equation*}
u(x,y) = \Big(F_{\infty}(x,y) - y,\frac{\partial}{\partial y}F_{\infty}(x,y) - 1 \Big).
\end{equation*}
Moreover, take 
\begin{equation*}
v(x,y) = \Big(F_{h}(x,y) - F_{\infty}(x,y), \frac{\partial}{\partial y}  F_{h}(x,y) - \frac{\partial}{\partial y} F_{\infty}(x,y) \Big).
\end{equation*}
Note that both coordinates of $v$ are $O(\lambda^h)$ by~\eqref{eq:Fh_approx}
and~\eqref{eq:pd_approx}. Since the boundary $\partial \Omega$ contains no zeros of
$u$, if we choose $h$ sufficiently large, then the conditions of
Theorem~\ref{thm:mult_rouche} are satisfied. Consequently, $u$ and $u+v$ have
the same number of zeros in $\Omega$, namely $1$. Solutions to the simultaneous
equations $F_h(x,y) = y$ and $\frac{\partial}{\partial y} F_h(x,y) = 1$ are
precisely zeros of $u+v$, so this completes the proof.
\end{proof}

At this point, it already follows from general principles (see the discussion
in~\cite[Chapter VII.4]{Flajolet-Sedgewick:ta:analy}) that for every
sufficiently large $h$, $Y_{h,1}$ has a dominant square root singularity at
$\rho_h$, and is otherwise analytic in a domain of the
form~\eqref{eq:delta-domain}. As we will need uniformity of the asymptotic
expansion and a uniform bound for the domain of analyticity, we will make this
more precise in the following section.

\subsection{Asymptotic expansion and area of analyticity}\label{sec:asy-area}

\begin{lemma}\label{lemma:factorisation-implicit-equation}
  Let $\varepsilon>0$ be such that all previous lemmata hold. There exist $\delta_1, \delta_2 > 0$, some positive number $h_0$, and
  analytic functions $R_h$ on $D_{\delta_1}(\rho_h)\times
  D_{\delta_2}(\eta_{h,1})$ and $S_h$ on $D_{\delta_2}(\eta_{h,1})$ for $h\ge
  h_0$ such that $\delta_2<\varepsilon$, $D_{\delta_1}(\rho_h)\times  D_{\delta_2}(\eta_{h,1})\subseteq
  \Xi_{\varepsilon}^{(1, 2)}$ and
  \begin{equation}\label{eq:implicit-equation-rewritten}
    F_h(x,y)-y = (x-\rho_h)R_h(x, y) + (y-\eta_{h,1})^2S_h(y)
  \end{equation}
  holds for $(x, y)\in D_{\delta_1}(\rho_h)\times D_{\delta_2}(\eta_{h,1})$ and $h\ge h_0$ and such that
  $\abs{R_h}$ is bounded from above and below by positive constants on $D_{\delta_1}(\rho_h)\times D_{\delta_2}(\eta_{h,1})$ for $h\ge h_0$ (uniformly in $h$)
  and $\abs{S_h}$ is bounded from above and below by positive constants on
  $D_{\delta_2}(\eta_{h,1})$ for $h\ge h_0$ (uniformly in $h$).

  Furthermore, the sequences $R_h$ and $S_h$ converge uniformly to some analytic functions
  $R$ and $S$, respectively. The same holds for their partial derivatives.
\end{lemma}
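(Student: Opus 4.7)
The plan is to mimic the standard derivation of a square-root singularity from an implicit equation, making the factorisation explicit and then leveraging the uniform estimates on $F_h$ established earlier. Write $G_h(x,y) = F_h(x,y) - y$. By Lemma~\ref{lemma:unique-singularity} (equivalently, by the discussion at the end of the proof of Lemma~\ref{lemma:singularity-location}) we have $G_h(\rho_h,\eta_{h,1}) = 0$ and $\partial_y G_h(\rho_h,\eta_{h,1}) = 0$, while $\partial_x G_h(\rho_h,\eta_{h,1}) = \partial_x F_h(\rho_h,\eta_{h,1})$ and $\partial_y^2 G_h(\rho_h,\eta_{h,1}) = \partial_y^2 F_h(\rho_h,\eta_{h,1})$ are the natural candidates for the leading coefficients of $R_h$ and $2 S_h$ at the base point.

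First I would define
\begin{equation*}
S_h(y) \;:=\; \frac{G_h(\rho_h,y)}{(y-\eta_{h,1})^2}.
\end{equation*}
Since the numerator is analytic in $y$ with vanishing Taylor coefficients at $\eta_{h,1}$ up to order $1$, $S_h$ is analytic on any disk $D_{\delta_2}(\eta_{h,1})\subseteq \Xi_\varepsilon^{(2)}$, with $S_h(\eta_{h,1}) = \tfrac12\,\partial_y^2 F_h(\rho_h,\eta_{h,1})$. Next I set
\begin{equation*}
R_h(x,y) \;:=\; \frac{G_h(x,y) - G_h(\rho_h,y)}{x-\rho_h};
\end{equation*}
the numerator vanishes at $x=\rho_h$, so $R_h$ is jointly analytic on any polydisc $D_{\delta_1}(\rho_h)\times D_{\delta_2}(\eta_{h,1}) \subseteq \Xi_\varepsilon^{(1,2)}$ (choose $\delta_1,\delta_2$ so small that this inclusion holds for all large $h$, which is possible since $\rho_h\to\rho$ and $\eta_{h,1}\to \eta_1$ by Lemma~\ref{lemma:singularity-location}), with $R_h(\rho_h,\eta_{h,1}) = \partial_x F_h(\rho_h,\eta_{h,1})$. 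The factorisation~\eqref{eq:implicit-equation-rewritten} is then a tautology from the two definitions.

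Next I would verify the uniform bounds and convergence. From~\eqref{eq:Fh_approx} and Cauchy's integral formula applied on circles of fixed radius inside $\Xi_\varepsilon^{(1,2)}$, all partial derivatives of $F_h$ converge uniformly on compact subsets to those of $F_\infty$; in particular $\partial_x F_h \to \partial_x F_\infty$ and $\partial_y^2 F_h \to \partial_y^2 F_\infty$ uniformly. Combined with $\rho_h \to \rho$, $\eta_{h,1}\to\eta_1$, this gives $S_h\to S$ and $R_h\to R$ uniformly on the polydisc (after shrinking $\delta_1,\delta_2$), where
\begin{equation*}
S(y) = \frac{F_\infty(\rho,y) - y}{(y-\eta_1)^2}, \qquad R(x,y) = \frac{(F_\infty(x,y)-y) - (F_\infty(\rho,y)-y)}{x-\rho}.
\end{equation*}
A direct computation with $F_\infty(x,y)=x(\Phi(x+y)-1)$ and $\eta_1 = \tau-\rho$ yields $R(\rho,\eta_1) = \Phi(\tau) - 1 + \rho\Phi'(\tau) = \Phi(\tau) > 0$ and $S(\eta_1) = \tfrac12 \rho\Phi''(\tau) > 0$, using $\rho\Phi'(\tau) = 1$. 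By continuity, both $|R|$ and $|S|$ are bounded above and below by positive constants on a sufficiently small polydisc; uniform convergence then transfers these bounds to $R_h$, $S_h$ for all sufficiently large $h$. The convergence of the partial derivatives of $R_h,S_h$ follows again by Cauchy's formula applied on slightly larger disks.

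The main obstacle I anticipate is purely bookkeeping: ensuring that the single $\varepsilon$ chosen once at the beginning of Section~\ref{sec:contraction} still controls everything, and that the radii $\delta_1,\delta_2$ and the threshold $h_0$ are chosen in the right order so that (i) the polydisc lies in $\Xi_\varepsilon^{(1,2)}$ uniformly for all $h\ge h_0$, (ii) the zero sets of $R$ and $S$ are avoided, and (iii) the uniform $O(\lambda^h)$ estimates of~\eqref{eq:Fh_approx} and~\eqref{eq:pd_approx} are applicable on the enlarged disks needed for Cauchy's integral formula. Once the correct order of quantifiers is fixed, none of the individual steps is deep, but the cleanest route is to first fix $\delta_1,\delta_2$ using only the limiting function $F_\infty$, and then invoke uniform convergence to transfer all properties to $F_h$ for $h$ large enough.
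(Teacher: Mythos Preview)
Your proposal is correct and follows essentially the same approach as the paper: the definitions of $S_h$ and $R_h$ are identical, the computed limiting values $R(\rho,\eta_1)=\Phi(\tau)$ and $S(\eta_1)=\tfrac12\rho\Phi''(\tau)$ agree, and the overall logic (factorise, then transfer bounds and convergence from $F_\infty$ to $F_h$) is the same. The one place where the paper is more explicit than your sketch is the uniform convergence $R_h\to R$, $S_h\to S$: since the removable singularities sit at the moving points $\rho_h$ and $\eta_{h,1}$, the paper writes $R_h$ and $S_h$ as Cauchy integrals over contours centred at $(\rho_h,\eta_{h,1})$, then deforms to fixed contours centred at $(\rho,\eta_1)$ before invoking $F_h=F_\infty+O(\lambda^h)$ and $\rho_h\to\rho$, $\eta_{h,1}\to\eta_1$; this is exactly what your final paragraph anticipates but does not spell out.
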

\begin{proof}
  Recall that we can approximate partial derivatives of $F_h$ by those of $F_{\infty}$ with an exponential error bound (as in~\eqref{eq:pd_approx}), giving us
  \begin{align*}
    \frac{\partial}{\partial x}F_h(x,y)&=\frac{\partial}{\partial x}F_\infty(x,y) + O(\lambda^h)\\&=
    \frac{\partial F_\infty}{\partial x}(\rho, \eta_1) + O(\lambda^h) + O(x-\rho) + O(y-\eta_1)\\&=\Phi(\tau) + O(\lambda^h)+O(x-\rho) + O(y-\eta_1),
  \end{align*}
  as well as
  \begin{align*}
    \frac{\partial^2}{\partial y^2}F_h(x,y)&=\frac{\partial^2}{\partial y^2}F_\infty(x,y) + O(\lambda^h)\\
                                           &=\frac{\partial^2 F_\infty}{\partial y^2}(\rho, \eta_1) + O(\lambda^h)+ O(x-\rho) + O(y-\eta_1)&\\
    &=\rho\Phi''(\tau) + O(\lambda^h)+ O(x-\rho) + O(y-\eta_1)
  \end{align*}
  for $(x, y)$ in a neighbourhood of $(\rho, \eta_1)$ contained in $\Xi_\varepsilon^{(1, 2)}$ and $h\to\infty$.

  Using Lemma~\ref{lemma:singularity-location}, we choose $\delta_1>0$ and
  $\delta_2>0$ small enough and $h_0$ large enough such that $\abs{x-\rho_h}\le
  \delta_1$, $\abs{y-\eta_{h,1}}\le \delta_2$, and $h\ge h_0$ imply that
  \begin{equation}\label{eq:F_h-x-estimate}
    \abs[\Big]{\frac{\partial}{\partial x}F_h(x, y)-\Phi(\tau)}\le \frac12\Phi(\tau)
  \end{equation}
  and
  \begin{equation}\label{eq:F_h-y-y-estimate}
    \abs[\Big]{\frac{\partial^2}{\partial y^2}F_h(x,y)-\rho\Phi''(\tau)}\le \frac12 \rho\Phi''(\tau),
  \end{equation}
  and such that $\overline{D_{\delta_1}(\rho_h) \times D_{\delta_2}(\eta_{h,1})}\subseteq
  \Xi_\varepsilon^{(1, 2)}$. By Lemma~\ref{lemma:unique-singularity}, we have
  \begin{align}
    F_h(\rho_h, \eta_{h,1})&=\eta_{h,1},\label{rho-h-eta-h-0}\\
    \frac{\partial F_h}{\partial y}(\rho_h, \eta_{h,1}) &=1.\label{rho-h-eta-h-1}
  \end{align}
  We now define
  \begin{equation*}
    S_h(y)\coloneqq \frac{F_h(\rho_h, y)-y}{(y-\eta_{h,1})^2}
  \end{equation*}
  for $y\in \overline{D_{\delta_2}(\eta_{h,1})} \setminus \{\eta_{h,1}\}$. By~\eqref{rho-h-eta-h-0} and
  \eqref{rho-h-eta-h-1}, $S_h$ has a removable singularity at
  $\eta_{h,1}$. Therefore it is analytic on $D_{\delta_2}(\eta_{h,1})$.
  By~\eqref{rho-h-eta-h-0}, we have
  \begin{align*}
    F_h(\rho_h, y)-y &= (F_h(\rho_h, y)-y) - (F_h(\rho_h, \eta_{h,1})-\eta_{h,1})\\
    &=\int_{\eta_{h,1}}^{y} \Bigl(\frac{\partial}{\partial w}F_h(\rho_h, w)-1\Bigr)\,dw.
  \end{align*}
  By~\eqref{rho-h-eta-h-1}, this can be rewritten as
  \begin{align*}
    F_h(\rho_h, y)-y
    &=\int_{\eta_{h,1}}^{y} \Bigl(\Bigl(\frac{\partial F_h}{\partial y}(\rho_h, w)-1\Bigr) - \Bigl(\frac{\partial F_h}{\partial y}(\rho_h, \eta_{h, 1})-1\Bigr)\Bigr)\,dw\\
    &=\int_{\eta_{h,1}}^{y}\int_{\eta_{h, 1}}^w \frac{\partial^2F_h}{\partial y^2}(\rho_h, v)\,dv\,dw\\
    &=\int_{\eta_{h,1}}^{y}\int_{\eta_{h, 1}}^w \rho\Phi''(\tau)\,dv\,dw+
      \int_{\eta_{h,1}}^{y}\int_{\eta_{h, 1}}^w \Bigl(\frac{\partial^2F_h}{\partial y^2}(\rho_h, v)-\rho\Phi''(\tau)\Bigr)\,dv\,dw\\
    &= \frac12\rho\Phi''(\tau)(y-\eta_{h, 1})^2+
      \int_{\eta_{h,1}}^{y}\int_{\eta_{h, 1}}^w \Bigl(\frac{\partial^2F_h}{\partial y^2}(\rho_h, v)-\rho\Phi''(\tau)\Bigr)\,dv\,dw.
  \end{align*}
  Rearranging and using the definition of $S_h(y)$ as well as~\eqref{eq:F_h-y-y-estimate} yields
  \begin{equation*}
    \abs[\Big]{S_h(y)-\frac12\rho\Phi''(\tau)}\le \frac14 \rho\Phi''(\tau)
  \end{equation*}
  for all $y\in \overline{D_{\delta_2}(\eta_{h,1})}$ and $h\ge h_0$. Thus
  $\abs{S_h(y)}$ is bounded from below and above by positive constants for every
  such $y$ and $h$.

  We now define $R_h(x, y)$ such that~\eqref{eq:implicit-equation-rewritten} holds,
  which is equivalent to
  \begin{equation*}
    R_h(x, y) \coloneqq \frac{F_h(x,y)-F_h(\rho_h, y)}{x-\rho_h}
  \end{equation*}
  for $x\in \overline{D_{\delta_1}(\rho_h)}\setminus\{\rho_h\}$ and $y\in \overline{D_{\delta_2}(\eta_{h,1})}$.
  We have
  \begin{align*}
    F_h(\rho_h, y)-F_h(x,y)&=\int_x^{\rho_h}\frac{\partial F_h}{\partial x}(w,
                             y)\,dw\\
                           &=\Phi(\tau)(\rho_h-x)+\int_x^{\rho_h}\Bigl(\frac{\partial F_h}{\partial x}(w,
                             y)-\Phi(\tau)\Bigr)\,dw.
  \end{align*}
  Rearranging and using the definition of $R_h(x, y)$ yields
  \begin{equation*}
    \abs{R_h(x, y)-\Phi(\tau)}\le \frac12 \Phi(\tau)
  \end{equation*}
  by~\eqref{eq:F_h-x-estimate}
  for  $x\in \overline{D_{\delta_1}(\rho_h)}\setminus\{\rho_h\}$ and $y\in
  \overline{D_{\delta_2}(\eta_{h,1})}$ and $h\ge h_0$. In other words, $\abs{R_h(x,y)}$ is
  bounded from below and above by positive constants for these $(x, y)$ and $h$.

  To prove analyticity of $R_h$, we use Cauchy's formula to rewrite it as
  \begin{equation*}
    R_h(x, y) = \frac{1}{2\pi i}\oint_{\abs{\zeta-\rho_h}=\delta_1} \frac{F_h(\zeta, y)-F_h(\rho_h,
      y)}{\zeta-\rho_h}\,\frac{d\zeta}{\zeta-x}
  \end{equation*}
  for $x \neq \rho_h$ (note that the integrand has a removable singularity at
  $\zeta=\rho_h$ in this case). The integral is also defined for $x=\rho_h$ and
  clearly defines an analytic function on $D_{\delta_1}(\rho_h)\times
  D_{\delta_2}(\eta_{h,1})$ whose absolute value is bounded from above and below
  by a constant.

  To see uniform convergence of $R_h$, we use Cauchy's formula once more and
  get
  \begin{equation}
    R_h(x, y) = \frac{1}{(2\pi i)^2}\oint_{\abs{\zeta-\rho_h}=\delta_1}\oint_{\abs{\eta-\eta_{h,1}}=\delta_2} \frac{F_h(\zeta, \eta)-F_h(\rho_h,
      \eta)}{\zeta-\rho_h}\,\frac{d\eta}{\eta-y}\,\frac{d\zeta}{\zeta-x} \label{eq:Rh-Cauchy}
  \end{equation}
  for $x\in D_{\delta_1}(\rho_h)$ and $y\in
  D_{\delta_2}(\eta_{h,1})$. Without loss of generality, $h_0$ is large
  enough such that $\abs{\rho_h-\rho}<\delta_1/4$ and
  $\abs{\eta_{h,1}-\eta_1}<\delta_2/4$. By Cauchy's theorem, we can change the
  contour of integration such that~\eqref{eq:Rh-Cauchy} implies
  \begin{equation*}
    R_h(x, y) = \frac{1}{(2\pi i)^2}\oint_{\abs{\zeta-\rho}=\delta_1/2}\oint_{\abs{\eta-\eta_{1}}=\delta_2/2} \frac{F_h(\zeta, \eta)-F_h(\rho_h,
      \eta)}{\zeta-\rho_h}\,\frac{d\eta}{\eta-y}\,\frac{d\zeta}{\zeta-x}
  \end{equation*}
  for $x\in D_{\delta_1/4}(\rho)$ and $y\in
  D_{\delta_2/4}(\eta_{1})$, as the deformation is happening within
  the region of analyticity of the integrand.
  Using~\eqref{eq:Fh_approx} and the fact that the
  denominator of the integrand is bounded away from zero shows that
  \begin{equation*}
    R_h(x, y) = \frac{1}{(2\pi i)^2}\oint_{\abs{\zeta-\rho}=\delta_1/2}\oint_{\abs{\eta-\eta_{1}}=\delta_2/2} \frac{F_\infty(\zeta, \eta)-F_\infty(\rho_h,
      \eta)}{\zeta-\rho_h}\,\frac{d\eta}{\eta-y}\,\frac{d\zeta}{\zeta-x} + O(\lambda^h)
  \end{equation*}
  for $x\in D_{\delta_1/4}(\rho)$ and $y\in
  D_{\delta_2/4}(\eta_{1})$. By Lemma~\ref{lemma:singularity-location},
  replacing the remaining occurrences of $\rho_h$ by $\rho$ induces another error
  term of $O(\lambda^{h/2})$, so that we get
    \begin{equation*}
    R_h(x, y) = R(x, y) + O(\lambda^{h/2})
  \end{equation*}
  with
  \begin{equation*}
    R(x, y) \coloneqq \frac{1}{(2\pi i)^2}\oint_{\abs{\zeta-\rho}=\delta_1/2}\oint_{\abs{\eta-\eta_{1}}=\delta_2/2} \frac{F_\infty(\zeta, \eta)-F_\infty(\rho,
      \eta)}{\zeta-\rho}\,\frac{d\eta}{\eta-y}\,\frac{d\zeta}{\zeta-x}
  \end{equation*}
  for $x\in D_{\delta_1/4}(\rho)$ and $y\in
  D_{\delta_2/4}(\eta_{1})$. Of course, the $O$ constants do not
  depend on $x$ and $y$; therefore, we have uniform convergence.
  Analogously, we get
  \begin{align}
    S_h(y) &=\frac{1}{2\pi i} \oint_{\abs{\eta-\eta_{h,1}}=\delta_2} \frac{F_h(\rho_h, \eta)-\eta}{(\eta-\eta_{h,1})^2}\,\frac{d\eta}{\eta-y} \label{eq:Sh-Cauchy}\\
    &=S(y) + O(\lambda^{h/2})
  \end{align}
  with
  \begin{equation*}
    S(y)\coloneqq \frac{1}{2\pi i} \oint_{\abs{\eta-\eta_{1}}=\delta_2/2} \frac{F_h(\rho, \eta)-\eta}{(\eta-\eta_{1})^2}\,\frac{d\eta}{\eta-y},
  \end{equation*}
  for $y\in
  D_{\delta_2/4}(\eta_{1})$.
  Analogous results hold for partial derivatives.

  We replace $\delta_1$ by $\delta_1/4$ and $\delta_2$ by $\delta_2/4$ to get
  the result as stated in the lemma.
\end{proof}

\begin{lemma}\label{lemma:munchhausen}
  The constants  $\delta_1$, $\delta_2$ and $h_0$ in
  Lemma~\ref{lemma:factorisation-implicit-equation} can be chosen such that
  whenever $y=F_h(x, y)$ for some $(x, y)\in D_{\delta_1}(\rho_h)\times
  D_{\delta_2}(\eta_{h,1})$ and some $h\ge h_0$, we have $\abs{y-\eta_{h,1}}<\delta_2/2$.
\end{lemma}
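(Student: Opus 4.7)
The plan is to exploit the factorisation
\begin{equation*}
  F_h(x,y)-y = (x-\rho_h)\,R_h(x,y) + (y-\eta_{h,1})^2\,S_h(y)
\end{equation*}
established in Lemma~\ref{lemma:factorisation-implicit-equation}. Suppose $(x,y)\in D_{\delta_1}(\rho_h)\times D_{\delta_2}(\eta_{h,1})$ satisfies $y=F_h(x,y)$; then the left-hand side vanishes, giving
\begin{equation*}
  (y-\eta_{h,1})^2\,S_h(y) = -(x-\rho_h)\,R_h(x,y).
\end{equation*}
Since $\abs{R_h}$ is bounded above and $\abs{S_h}$ is bounded below by positive constants, uniformly in $h\ge h_0$, there exists a constant $C>0$, independent of $h$ (and crucially independent of $\delta_1$, as long as we only shrink $\delta_1$), such that
\begin{equation*}
  \abs{y-\eta_{h,1}}^2 \;\le\; C\,\abs{x-\rho_h} \;\le\; C\,\delta_1.
\end{equation*}

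Hence the conclusion $\abs{y-\eta_{h,1}}<\delta_2/2$ is guaranteed as soon as $\delta_1<\delta_2^2/(4C)$. The whole task therefore reduces to verifying that $\delta_1$ may be shrunk at will without invalidating Lemma~\ref{lemma:factorisation-implicit-equation}. Inspecting its proof, $\delta_2$ is fixed first (so that the estimate~\eqref{eq:F_h-y-y-estimate} on $\partial^2 F_h/\partial y^2$ holds on $\overline{D_{\delta_2}(\eta_{h,1})}$), and then $\delta_1$ is chosen small enough to give~\eqref{eq:F_h-x-estimate} and the inclusion $\overline{D_{\delta_1}(\rho_h)\times D_{\delta_2}(\eta_{h,1})}\subseteq \Xi_\varepsilon^{(1,2)}$; both conditions remain valid under any further reduction of $\delta_1$. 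The construction of $R_h$ and $S_h$, and the uniform bounds on them, likewise carry over verbatim. Thus replacing $\delta_1$ by $\min(\delta_1,\delta_2^2/(4C))$ (and keeping $\delta_2$, $h_0$) yields the claim.

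The only subtlety to watch for is the direction of the dependence between the constants: the bounds on $\abs{R_h}$ and $\abs{S_h}$ from Lemma~\ref{lemma:factorisation-implicit-equation} must indeed be uniform in $h$ and monotone with respect to $\delta_1$ (i.e., they do not degrade as $\delta_1$ shrinks); both are immediate from the explicit Cauchy integral representations of $R_h$ and $S_h$ used in that proof. Beyond this book-keeping, no new analytic input is required, and the result follows.
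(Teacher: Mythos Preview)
Your proposal is correct and follows essentially the same approach as the paper: both use the factorisation from Lemma~\ref{lemma:factorisation-implicit-equation} to deduce $\abs{y-\eta_{h,1}}^2 \le C\,\abs{x-\rho_h}$ from $F_h(x,y)=y$, and then shrink $\delta_1$ so that $C\delta_1 < \delta_2^2/4$. You are simply more explicit than the paper about why decreasing $\delta_1$ preserves the conclusions of Lemma~\ref{lemma:factorisation-implicit-equation} (in particular, $S_h$ does not depend on $\delta_1$ at all, and the upper bound on $\abs{R_h}$ can only improve when the $x$-domain shrinks), which is a perfectly reasonable bit of bookkeeping.
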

\begin{proof}
  We first choose $\delta_1$ and $\delta_2$ as in
  Lemma~\ref{lemma:factorisation-implicit-equation}.
  Then $y=F_h(x,y)$ and Lemma~\ref{lemma:factorisation-implicit-equation} imply
  that
  \begin{equation*}
    \abs{y-\eta_{h,1}}=\sqrt{\abs{x-\rho_h}\abs[\Big]{\frac{R_h(x, y)}{S_h(y)}}}.
  \end{equation*}

  The fraction on the right-hand side is bounded by some absolute constant
  according to Lemma~\ref{lemma:factorisation-implicit-equation}. So by
  decreasing $\delta_1$ if necessary, the right-hand side is at most $\delta_2/2$.
\end{proof}

\begin{lemma}\label{lemma:expansion-around-the-singularity}
Let $\varepsilon>0$ be such that the previous lemmata hold.
There exists $\delta_0 > 0$ such that, for all sufficiently large $h$, the asymptotic formula
\begin{equation}\label{eq:Yh1-singular-expansion}
Y_{h,1}(x) = \eta_{h,1} + a_h \Bigl(1-\frac{x}{\rho_h}\Bigr)^{1/2} + b_h\Bigl(1-\frac{x}{\rho_h}\Bigr) 
+ c_h \Bigl(1-\frac{x}{\rho_h}\Bigr)^{3/2} + O\Bigl((\rho_h-x)^2\Bigr)
\end{equation}
holds for $x\in D_{\delta_0}(\rho_h)$ with $\abs{\Arg(x-\rho_h)}\ge \pi/4$ and
certain sequences $a_h$, $b_h$ and $c_h$. The $O$-constant is independent of
$h$, and $a_h$, $b_h$, $c_h$ converge to the coefficients $a$, $b$, $c$ in
\eqref{eq:singular_expansion_Y} at an exponential rate as $h \to \infty$.
Additionally, $\abs{Y_{h,1}(x)-\eta_1}<\varepsilon/2$ for all these $x$.
\end{lemma}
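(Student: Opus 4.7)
The plan is to exploit the factorisation
\begin{equation*}
F_h(x,y) - y = (x - \rho_h) R_h(x, y) + (y - \eta_{h,1})^2 S_h(y)
\end{equation*}
from Lemma~\ref{lemma:factorisation-implicit-equation}. Substituting $y = Y_{h,1}(x)$ and using $Y_{h,1}(x) = F_h(x, Y_{h,1}(x))$ rearranges to
\begin{equation*}
(Y_{h,1}(x) - \eta_{h,1})^2 = (\rho_h - x)\,\frac{R_h(x, Y_{h,1}(x))}{S_h(Y_{h,1}(x))},
\end{equation*}
which already displays the square-root nature of the singularity because $R_h$ and $S_h$ are bounded above and below by positive constants uniformly in $h$. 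I would then restrict to the slit disc $x\in D_{\delta_0}(\rho_h)$ with $\abs{\Arg(x-\rho_h)}\ge \pi/4$, since on this set the principal branch $u := (1-x/\rho_h)^{1/2}$ is single-valued and analytic, and $x = \rho_h(1-u^2)$ is a biholomorphism onto a slit neighbourhood of $0$ in the $u$-plane.

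Next, set $T_h(u) := Y_{h,1}(\rho_h(1-u^2))$ and write $T_h(u) = \eta_{h,1} + u\,G_h(u)$. The displayed equation becomes
\begin{equation*}
G_h(u)^2\,S_h\bigl(\eta_{h,1}+uG_h(u)\bigr) = \rho_h\,R_h\bigl(\rho_h(1-u^2),\,\eta_{h,1}+uG_h(u)\bigr).
\end{equation*}
At $u=0$ the right-hand side is $\rho_h R_h(\rho_h,\eta_{h,1})$, which together with $S_h(\eta_{h,1})$ is bounded away from $0$ uniformly in $h$ by Lemma~\ref{lemma:factorisation-implicit-equation}. Choosing the branch of $G_h(0)$ corresponding to the behaviour of $Y_{h,1}$ along the real axis (so that $Y_{h,1}(x)$ increases to $\eta_{h,1}$ as $x \to \rho_h^-$, forcing $a_h < 0$), the analytic implicit function theorem produces a unique analytic solution $G_h$ on a disc $\abs{u}<u_0$ for some $u_0>0$ that can be chosen independent of $h$ (once $h$ is large). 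Shrinking $\delta_0$ so that the image of the slit disc in the $u$-plane sits inside $\{\abs{u}<u_0/2\}$ then gives~\eqref{eq:Yh1-singular-expansion} via the Taylor expansion $G_h(u) = a_h + b_h u + c_h u^2 + O(u^3)$, with an $O$-constant uniform in $h$ because $G_h$ and its first few derivatives are uniformly bounded on $\{\abs{u}\le u_0/2\}$.

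For the exponential convergence of the coefficients, I would define a limit function $G$ by the same implicit equation with $R,S,\rho,\eta_1$ in place of $R_h,S_h,\rho_h,\eta_{h,1}$. By Lemma~\ref{lemma:factorisation-implicit-equation} the data $R_h,S_h$ converge uniformly to $R,S$ at rate $O(\lambda^{h/2})$, and by Lemma~\ref{lemma:singularity-location} so do $\rho_h\to\rho$ and $\eta_{h,1}\to\eta_1$. A standard perturbation argument for the implicit function theorem then gives $G_h \to G$ uniformly on $\{\abs{u}\le u_0/2\}$ at rate $O(\lambda^{h/2})$, and Cauchy's formula applied to $G_h - G$ transfers this rate to each Taylor coefficient. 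Comparing the resulting limit expansion with~\eqref{eq:singular_expansion_Y}, which is the singular expansion of $Y$ at $\rho$ and is recovered by the same construction in the $h\to\infty$ limit, identifies $\lim a_h = a$, $\lim b_h = b$, $\lim c_h = c$. Finally, the auxiliary bound $\abs{Y_{h,1}(x)-\eta_1}<\varepsilon/2$ follows from Lemma~\ref{lemma:munchhausen} applied to $(x, Y_{h, 1}(x))$ (after choosing $\delta_0 \le \delta_1$) together with $\eta_{h,1}\to\eta_1$.

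The main obstacle will be managing the hierarchy of smallness constants: $\delta_0$ must be small enough that the preimage in the $u$-plane lies in the disc where the implicit function theorem furnished $G_h$, yet $u_0$ must be chosen uniformly in $h$, which requires the uniform positive lower bounds on $\abs{R_h}$ and $\abs{S_h}$ from Lemma~\ref{lemma:factorisation-implicit-equation} and the convergence of $(\rho_h, \eta_{h,1})$ from Lemma~\ref{lemma:singularity-location}. Selecting the correct square-root branch (so that the $a_h$ matches the known $a<0$ in~\eqref{eq:singular_expansion_Y}) also demands care. The remaining step of deducing exponential convergence of Taylor coefficients from uniform convergence via Cauchy estimates is routine but requires careful bookkeeping of error terms.
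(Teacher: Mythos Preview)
Your proposal is correct and reaches the same conclusion as the paper, but the argument is organised differently. The paper works in the $x$-variable throughout: it first proves analytic continuation of $Y_{h,1}$ to the whole slit disc $C(\delta_1)$ by an extension argument (repeatedly invoking Lemma~\ref{lemma:munchhausen} and the analytic implicit function theorem at boundary points, together with a compactness step), and only then derives the Puiseux expansion by bootstrapping the relation
\[
\eta_{h,1}-Y_{h,1}(x)=\sqrt{\rho_h\,\frac{R_h(x,Y_{h,1}(x))}{S_h(Y_{h,1}(x))}}\,\Bigl(1-\frac{x}{\rho_h}\Bigr)^{1/2},
\]
substituting the current approximation of $Y_{h,1}$ back into $R_h/S_h$ to peel off successive terms; this yields explicit formulas for $a_h,b_h,c_h$ in terms of $R_h,S_h$ and their partial derivatives at $(\rho_h,\eta_{h,1})$, from which exponential convergence is read off directly via Lemma~\ref{lemma:factorisation-implicit-equation}. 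Your change of variable $u=(1-x/\rho_h)^{1/2}$ and the implicit-function-theorem construction of $G_h$ collapse these two stages into one: solving for $G_h$ analytically near $u=0$ simultaneously gives the analytic continuation to the slit disc and the full Taylor expansion, and the perturbation argument for the implicit equation replaces the explicit coefficient formulas. Both routes rest on exactly the same uniform bounds on $R_h,S_h$ from Lemma~\ref{lemma:factorisation-implicit-equation} and the convergence $\rho_h\to\rho$, $\eta_{h,1}\to\eta_1$ from Lemma~\ref{lemma:singularity-location}; the paper's version is more hands-on (explicit coefficients), while yours is a bit cleaner conceptually and avoids the separate continuation step.
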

\begin{proof}
By~\eqref{eq:implicit-equation-rewritten}, the function $Y_{h,1}$ is determined by the implicit equation
\begin{equation}\label{eq:Yh1-implicit-equation}
0 = F_h(x,Y_{h,1}(x)) - Y_{h,1}(x) = (x-\rho_h)R_h(x,Y_{h,1}(x)) + (Y_{h,1}(x)-\eta_{h,1})^2 S_h(Y_{h,1}(x)).
\end{equation}

  For $r>0$, set $C(r)\coloneqq \{x\in D_r(\rho_h)\colon
  \abs{\Arg(x-\rho_h)}\ge \pi/4\}$ and $\widetilde{C}(r)\coloneqq \{x\in \C\colon
  \abs{x-\rho_h}=r \text{ and }
  \abs{\Arg(x-\rho_h)}\ge \pi/4\}$.
  Choose $\delta_1$, $\delta_2$, $h_0$ as in Lemma~\ref{lemma:munchhausen}. For
  some $h\ge h_0$, let $r_h$ be the supremum of all $r\le \delta_1$ such that
  $Y_{h,1}$ can be continued analytically to $C(r)$ with values in $D_{\delta_2/2}(\eta_{h,1})$. We
  claim that $r_h=\delta_1$.

  Suppose for contradiction that $r_h<\delta_1$ and let
  $x_{\infty}\in\widetilde C(r_h)$. Choose a sequence of elements
  $x_n\in C(r_h)$ converging to
  $x_\infty$ for $n\to \infty$ and set $y_n\coloneqq Y_{h, 1}(x_n)$ for all $n$. By assumption, we have
  $\abs{y_n-\eta_{h,1}}\le \delta_2/2$. By replacing the sequence $x_n$ by a subsequence
  if necessary, we may assume that the sequence $y_n$ is convergent to some
  limit $y_{\infty}$. Note that $\abs{y_\infty-\eta_{h,1}}\le
  \delta_2/2$. By continuity of $F_h$, we also have $y_\infty=F_h(x_\infty,
  y_\infty)$. As $(x_\infty, y_\infty)\in \Xi_\varepsilon^{(1, 2)}$ with
  $x_\infty\neq \rho_h$, Lemma~\ref{lemma:unique-singularity} and the analytic
  implicit function theorem imply that $Y_{h,1}$ can be continued analytically
  in a suitable open neighbourhood of $x_\infty$. This neighbourhood can be chosen
  small enough such that the inequality $\abs{Y_{h,1}(x)-\eta_{h,1}}\le \delta_2$ holds for all $x$
  in this neighbourhood. However, Lemma~\ref{lemma:munchhausen} implies that we
  then actually have $\abs{Y_{h,1}(x)-\eta_{h,1}}\le \delta_2/2$ for all such $x$.

  The set of these open neighbourhoods associated with all $x_\infty\in
  \widetilde{C}(r_h)$ covers the compact set $\widetilde{C}(r_h)$, so a finite
  subset of these open neighbourhoods can be selected. Thus we find an analytic
  continuation of $Y_{h,1}$ to $C(\widetilde{r}_h)$ for some
  $\widetilde{r}_h\in (r_h, \delta_1)$
  with values still in $D_{\delta_2/2}(\eta_{h,1})$, which is a
  contradiction to the choice of $r_h$.

  Thus we have $r_h=\delta_1$. In particular, choosing $h$ large enough that $\abs{\eta_{h, 1} - \eta_1} < (\varepsilon - \delta_2)/2$ gives
  $\abs{Y_{h,1}(x)-\eta_1} \le \abs{Y_{h,1}(x) - \eta_{h,1}} + \abs{\eta_{h,1}-\eta_1} < \delta_2/2 + (\varepsilon-\delta_2)/2 = \varepsilon/2$ for all $x\in C(\delta_1)$.

  Rearranging~\eqref{eq:Yh1-implicit-equation} yields
  \begin{equation}\label{eq:expansion-around-singularity-bootstrap-preparation}
    (\eta_{h, 1} - Y_{h,1}(x))^2 = \Bigl(\rho_h\frac{R_h(x, Y_{h,1}(x))}{S_h(Y_{h,
        1}(x))}\Bigr) \Bigl(1-\frac{x}{\rho_h}\Bigr).
  \end{equation}
  We know from Lemma~\ref{lemma:factorisation-implicit-equation} that $R_h$ is
  bounded above and $S_h$ is bounded below on
  $D_{\delta_1}(\rho_h)\times D_{\delta_2}(\eta_{h,1})$ and
  $D_{\delta_2}(\eta_{h,1})$, respectively.  Therefore, the absolute value of
  the first factor on the right-hand side
  of~\eqref{eq:expansion-around-singularity-bootstrap-preparation} is bounded
  above and below by positive constants for $x\in D_{\delta_1}(\rho_h)$.  For
  $x<\rho_h$, we have that the factor $(1-x/\rho_h)$ is trivially positive and
  that $\eta_{h,1}>Y_{h,1}(x)$ because $Y_{h,1}$ is strictly increasing on
  $(0, \rho_h)$, so the first factor on the right-hand side
  of~\eqref{eq:expansion-around-singularity-bootstrap-preparation} must be
  positive. Thus we may take the principal value of the square root to
  rewrite~\eqref{eq:expansion-around-singularity-bootstrap-preparation} as
  \begin{equation}\label{eq:expansion-around-singularity-bootstrap}
    \eta_{h, 1} - Y_{h,1}(x) = \sqrt{\rho_h\frac{R_h(x, Y_{h,1}(x))}{S_h(Y_{h,
          1}(x))}} \Bigl(1-\frac{x}{\rho_h}\Bigr)^{1/2}
  \end{equation}
  for $x\in C(\delta_1)$. The above considerations also show that the radicand
  in \eqref{eq:expansion-around-singularity-bootstrap} remains positive in
  the limit $x\to \rho_h^{-}$ (i.e., as
  $x$ approaches $\rho_h$ from the left) and then for $h\to\infty$.

  As we just observed that the first factor on the right-hand side
  of~\eqref{eq:expansion-around-singularity-bootstrap} is bounded,
  \eqref{eq:expansion-around-singularity-bootstrap} implies
  \begin{equation}\label{eq:expansion-around-singularity-bootstrap-step-1}
    Y_{h,1}(x)-\eta_{h,1}=O\bigl((x-\rho_h)^{1/2}\bigr),
  \end{equation}
  with an $O$-constant that is independent of $h$. We can now iterate this
  argument: using Taylor expansion along with the fact that partial derivatives
  of $R_h$ and $S_h$ are uniformly bounded above while $S_h$ is also uniformly
  bounded below, we obtain
\begin{align*}
\frac{R_h(x, Y_{h,1}(x))}{S_h(Y_{h,
          1}(x))} &= \frac{R_h(\rho_h,\eta_{h,1}) + O(x-\rho_h) + O(Y_{h,1}(x)-\eta_{h,1})}{S_h(\eta_{h,1}) + O(Y_{h,1}(x)-\eta_{h,1})} \\&= \frac{R_h(\rho_h,\eta_{h,1})}{S_h(\eta_{h,1})} + O\bigl((x-\rho_h)^{1/2}\bigr).
\end{align*}
Plugging this into~\eqref{eq:expansion-around-singularity-bootstrap} yields
  \begin{equation*}
    \eta_{h, 1} - Y_{h,1}(x) = \sqrt{\rho_h\frac{R_h(\rho_h, \eta_{h,1})}{S_h(\eta_{h,1})}} \Bigl(1-\frac{x}{\rho_h}\Bigr)^{1/2} + O( x - \rho_h),
  \end{equation*}
  still with an $O$-constant that is independent of $h$. This can be continued
  arbitrarily often to obtain further terms of the expansion and an improved
  error term (for our purposes, it is enough to stop at $O((x -
  \rho_h)^2)$). Indeed it is well known (cf.~\cite[Lemma
  VII.3]{Flajolet-Sedgewick:ta:analy}) that an implicit equation of the
  form~\eqref{eq:Yh1-implicit-equation} has a solution as a power series in
  $(1-x/\rho_h)^{1/2}$. In particular,~\eqref{eq:Yh1-singular-expansion}
  follows with an error term that is uniform in $h$. The coefficients
  $a_h,b_h,c_h$ can be expressed in terms of $R_h$, $S_h$ and their partial
  derivatives evaluated at $(\rho_h,\eta_{h,1})$: specifically,
\begin{align*}
a_h &= - \sqrt{\rho_h\frac{R_h(\rho_h, \eta_{h,1})}{S_h(\eta_{h,1})}},\\
b_h &= \frac{\rho_h S_h(\eta_{h,1}) \frac{\partial R_h}{\partial y}(\rho_h,\eta_{h,1})-\rho_h S_h'(\eta_{h,1}) R_h(\rho_h,\eta_{h,1})}{2 S_h(\eta_{h,1})^2},\\
c_h &=\frac{\rho_h^{3/2}N}{8\sqrt{R_h(\rho_h, \eta_{h,1})S_h(\eta_{h,1})^7}},
\end{align*}
where the numerator $N$ is a polynomial in $R_h(\rho_h,\eta_{h,1})$,
$S_h(\eta_{h,1})$ and their derivatives.
By Lemma~\ref{lemma:factorisation-implicit-equation}, $R_h$ and $S_h$ as well
as their partial derivatives converge uniformly to $R$ and $S$ as well as their
partial derivatives, respectively, with an error bound of $O(\lambda^{h/2})$.
We also know that $\rho_h$ and
$\eta_{h,1}$ converge exponentially to $\rho$ and $\eta_1$, respectively, see
Lemma~\ref{lemma:singularity-location}. This means that first replacing
all occurrences of $R_h$ and $S_h$ by $R$ and $S$, respectively, and then replacing all
occurrences of $\rho_h$ and $\eta_{h,1}$ by $\rho$ and $\eta_1$, respectively,
shows that $a_h=a+O(\lambda^{h/2})$, $b_h=b+O(\lambda^{h/2})$, and
$c_h=c+O(\lambda^{h/2})$ where $a$, $b$, and $c$ are the results of these
replacements.
Taking the limit for $h\to\infty$
in~\eqref{eq:implicit-equation-rewritten} shows that $R$ and $S$ and therefore
$a$, $b$, and $c$ play the same role with respect to $F_\infty$ as $R_h$,
$S_h$, $a_h$, $b_h$, and $c_h$ play with respect to $F_h$, which implies that
$a$, $b$, and $c$ are indeed the constants from~\eqref{eq:singular_expansion_Y}.
\end{proof}

Having dealt with the behaviour around the singularity, it remains to prove a uniform bound on $Y_{h,1}$ in a domain of the form~\eqref{eq:delta-domain} for fixed $\delta$.

\begin{lemma}\label{lemma:continuation-to-pacman}
  Let $\varepsilon>0$ be such that all previous lemmata hold.
  There exist $\delta > 0$ and a positive integer $h_0$ such that $Y_{h,1}(x)$ has an analytic continuation to the domain
\begin{equation*}
    \{x \in \C: \abs{x} \leq (1 + \delta)\abs{\rho_h}, \abs{\Arg(x/\rho_h - 1)} > \pi/4\}
\end{equation*}
for all $h \geq h_0$, and has the uniform upper bound
\begin{equation*}
\abs{Y_{h,1}(x)} \leq \tau - \rho + \frac{\varepsilon}{2}  = \eta_1 + \frac{\varepsilon}{2}
\end{equation*}
for all $h \geq h_0$ and all $x$.
\end{lemma}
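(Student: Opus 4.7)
The plan is to extend the analytic continuation provided by Lemma~\ref{lemma:expansion-around-the-singularity} on the inner region $C(\delta_0)=\{x:|x-\rho_h|<\delta_0,\,|\Arg(x-\rho_h)|\ge\pi/4\}$ to the full pacman region
\[
\Omega_h=\{x:|x|\le(1+\delta)\rho_h,\,|\Arg(x/\rho_h-1)|>\pi/4\}
\]
by independently constructing $Y_{h,1}$ on the complement $\Omega'_h=\Omega_h\setminus\overline{D_{\delta_0/2}(\rho_h)}$ and matching the two continuations on the annular overlap. On $C(\delta_0)$, the desired bound $|Y_{h,1}(x)|\le\eta_1+\varepsilon/2$ is already supplied by Lemma~\ref{lemma:expansion-around-the-singularity} via $|Y_{h,1}(x)-\eta_1|<\varepsilon/2$ and the triangle inequality, so only $\Omega'_h$ requires new work.

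To handle $\Omega'_h$, I would work with the limit equation $y=F_\infty(x,y)$, whose solution is $\tilde Y(x)\coloneqq Y(x)-x$. Standard singularity analysis of the simply generated family (Section~\ref{sec:sg_trees_basic}) gives an analytic continuation of $\tilde Y$ to a Delta-domain $\Delta_*=\{x:|x|\le\rho(1+2\delta),\,|\Arg(x/\rho-1)|>\pi/8\}$, and for $\delta$ chosen small and $h$ sufficiently large, $\overline{\Omega'_h}\subseteq\Delta_*\setminus\{\rho\}$. The auxiliary bound $|\tilde Y(x)|\le\eta_1+\varepsilon/4$ on $\Delta_*$ follows as follows: for $|x|\le\rho$, non-negativity of the coefficients of $\tilde Y(x)=\sum_{n\ge 2}y_n x^n$ gives
\[
|\tilde Y(x)|\le Y(|x|)-|x|\le Y(\rho)-\rho=\eta_1,
\]
where monotonicity of $r\mapsto Y(r)-r$ on $[0,\rho]$ comes from $Y'(r)=\Phi(Y(r))/(1-r\Phi'(Y(r)))>\Phi(Y(r))\ge 1$ for $r\in(0,\rho)$; continuity then extends the bound to $\Delta_*$ at the cost of the $\varepsilon/4$ slack, provided $\delta$ is small enough. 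Moreover, since $\rho$ is the unique point of $\overline{\Delta_*}$ at which the quantity $1-x\Phi'(Y(x))$ vanishes (by the technical assumption on $\Phi$ in Section~\ref{sec:sg_trees_basic}), this derivative is bounded away from zero on the compact set $\overline{\Omega'_h}$, uniformly in (sufficiently large) $h$.

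Combining this non-degeneracy with the uniform estimates $F_h-F_\infty=O(\lambda^h)$ and $\partial F_h/\partial y-\partial F_\infty/\partial y=O(\lambda^h)$ from~\eqref{eq:Fh_approx} and~\eqref{eq:pd_approx}, a finite covering of $\overline{\Omega'_h}$ by sufficiently small disks together with the analytic implicit function theorem (or equivalently the multivariate Rouch\'e theorem, Theorem~\ref{thm:mult_rouche}) produces, for $h$ large enough, a unique analytic function $Y_{h,1}$ on $\Omega'_h$ satisfying $y=F_h(x,y)$ and obeying $|Y_{h,1}(x)-\tilde Y(x)|=O(\lambda^h)$; in particular $|Y_{h,1}(x)|\le\eta_1+\varepsilon/4+O(\lambda^h)\le\eta_1+\varepsilon/2$. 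Agreement with the inner continuation on $C(\delta_0)\cap\Omega'_h$ is then forced by the identity theorem, since both continuations restrict to the convergent power series of $Y_{h,1}$ on the real interval $(\rho_h-\delta_0,\rho_h-\delta_0/2)$ contained in the overlap. The principal difficulty is making all constants uniform in $h$---the radius used in the Rouch\'e step, the lower bound on $|1-x\Phi'(Y(x))|$ on $\overline{\Omega'_h}$, and the inclusion $\overline{\Omega'_h}\subseteq\Delta_*$---but each of these uniformities follows from compactness of $\overline{\Omega'_h}$ together with the exponential convergence $\rho_h\to\rho$ established in Lemma~\ref{lemma:singularity-location}.
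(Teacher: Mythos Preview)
Your approach is correct and genuinely different from the paper's. The paper argues by contradiction: it defines $r_h$ as the supremal radius to which $Y_{h,1}$ extends on $D_r(0)\setminus D_{\delta_0}(\rho_h)$ while keeping the bound $|Y_{h,1}|<\eta_1+\varepsilon/2$, assumes $\liminf r_h=\rho$, and then uses Montel's theorem and the Arzel\`a--Ascoli theorem on a carefully chosen keyhole domain to extract a limiting function that must equal $Y(x_\infty z)-x_\infty z$; a simple triangle-inequality estimate on $|Y(x)-x|$ for $|x|\le\rho$ then yields the contradiction. Your argument is instead constructive: you perturb off the explicitly known solution $\tilde Y=Y-x$ of the limit equation $y=F_\infty(x,y)$ on the outer region, using the implicit function theorem (or Rouch\'e) together with the uniform bounds \eqref{eq:Fh_approx}, \eqref{eq:pd_approx}. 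This has the advantage of being more direct and of producing the quantitative closeness $Y_{h,1}(x)=\tilde Y(x)+O(\lambda^h)$ on $\Omega'_h$ for free, whereas the paper's normal-families argument gives only an existence statement. The paper's route, on the other hand, avoids having to manage the patching of local implicit-function charts and the uniform-in-$h$ Rouch\'e radii.

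One point in your write-up deserves more care. You assert that the constructed solution on $\Omega'_h$ ``restricts to the convergent power series of $Y_{h,1}$'' on the real interval $(\rho_h-\delta_0,\rho_h-\delta_0/2)$, but you do not justify this. The cleanest fix is to match at the origin rather than near $\rho_h$: since $0\in\Omega'_h$ and both the power series and your constructed solution satisfy $y=F_h(x,y)$ with value $0$ at $x=0$, uniqueness in the implicit function theorem forces them to agree on a neighbourhood of $0$; the identity theorem then propagates agreement across the connected set $D_{\rho_h}(0)\cap\Omega'_h$, which contains your interval. Matching directly on the interval near $\rho_h$ would require knowing a priori that the power-series branch $Y_{h,1}(x)$ lies inside the small Rouch\'e disk around $\tilde Y(x)$ there, and the bound $|Y_{h,1}(x)-\eta_1|<\varepsilon/2$ from Lemma~\ref{lemma:expansion-around-the-singularity} is not quite strong enough to guarantee that without further shrinking of constants.
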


\begin{proof}
Let us define $r_h = \sup \mathcal{R}_h$, where
\begin{equation*}
\mathcal{R}_h =  \Big\{ r \,: \, Y_{h,1} \text{ extends analytically to } D_r(0) \setminus D_{\delta_0}(\rho_h) \text{ and satisfies } \abs{Y_{h,1}(x)} < \eta_1 + \frac{\varepsilon}{2} \text{ there} \Big\},
\end{equation*}
with $\delta_0$ as in the previous lemma. Note that trivially, $r_h \geq
\rho$. If $\liminf_{h \to \infty} r_h > \rho$, we are done: in this case, there
is some $\delta > 0$ such that $Y_{h,1}$ extends analytically to
$D_{\rho(1+\delta)}(0) \setminus D_{\delta_0}(\rho_h)$ and satisfies
$\abs{Y_{h,1}(x)} < \eta_1 + \frac{\varepsilon}{2}$ there. As the previous
lemma covers $D_{\delta_0}(\rho_h)$, this already completes the proof.

So let us assume that $\liminf_{h \to \infty} r_h = \rho$ and derive a
contradiction.
The assumption implies that there is an increasing sequence of
positive integers $h_j$ such that $\lim_{j\to\infty }r_{h_j}=\rho$.
Without loss of generality, we may assume that
$r_{h_j} \leq \rho + \frac{\varepsilon}{2}$ for all $j$.
Pick (for each sufficiently large
$j$) a point $x_{h_j}$ with $\abs{x_{h_j}} = r_{h_j}$ and
$\abs{Y_{{h_j},1}(x_{h_j})} = \eta_1 + \frac{\varepsilon}{2}$. If this were not
possible, we could analytically continue $Y_{{h_j},1}$ at every point $x$ with
$\abs{x} = r_{h_j}$ and $x \notin D_{\delta_0}(\rho_{h_j})$ to a disk where $Y_{h_j,1}$
is still bounded by $\eta_1 + \frac{\varepsilon}{2}$. This analytic
continuation is possible, since by Lemma~\ref{lemma:unique-singularity} the
pair $(\rho_{h_j},\eta_{{h_j},1})$ is the only solution to the simultaneous equations
$F_{h_j}(x,y) = y$ and $\frac{\partial }{\partial y} F_{h_j}(x,y) = 1$ with
$(x,y) \in \Xi_{\varepsilon}^{(1,2)}$, so the analytic implicit function
theorem becomes applicable (compare e.g.~the analytic continuation of $q_h$ in
Lemma~\ref{lem:q_h-unique}). By compactness, this would allow us to extend
$Y_{{h_j},1}$ to $D_r(0) \setminus D_{\delta_0}(\rho_{h_j})$ for some $r > r_{h_j}$ while
still maintaining the inequality
$\abs{Y_{{h_j},1}(x)} < \eta_1 + \frac{\varepsilon}{2}$, contradicting the choice
of $r_{h_j}$.

Without loss of generality (choosing a subsequence if
necessary), we can assume that $x_{h_j}$ and $Y_{h_j,1}(x_{h_j})$ have limits
$x_{\infty}$ and $y_{\infty}$, respectively. By construction,
$\abs{x_{\infty}} = \rho$ and
$\abs{y_{\infty}} = \eta_1 + \frac{\varepsilon}{2}$.

Since $x_{h_j} \notin D_{\delta_0}(\rho)$ for all $j$, $\Arg x_{h_j}$ is
bounded away from $0$. Thus we can find $\alpha > 0$ such that
$\abs{\Arg x_{h_j}} \geq 2\alpha$ for all $j$. Define the region $A$ by
\begin{equation*}
  A = \Bigl\{z \in \C\,:\, \abs{z} < \frac12 \text{ or } (\abs{z} < 1 \text{ and } \abs{\Arg z} < \alpha) \Bigr\}.
\end{equation*}

\begin{figure}
\centering
\begin{tikzpicture}

\fill[fill=gray!20] (0,0) circle (1);
\fill[fill=gray!20] (0,0) -- ({sqrt(3)},1) arc [start angle=30, delta angle=30, radius=2] -- (0,0);
\draw[thick,<->] ({3*sqrt(3)/4},3/4) arc [start angle=30, delta angle=30, radius=3/2];

\draw [->,thick] (-2.2,0)--(2.2,0);
\draw [->,thick] (0,-2.2)--(0,2.2);

\node[circle,fill,inner sep = 1] at (1.8, 0) {};
\node[circle,fill,inner sep = 1] at ({sqrt(2)}, {sqrt(2)}) {};

\node at (1.6,1.6) {$x_{h_j}$};
\node at (1.8,-0.3) {$\rho_{h_j}$};
\node at (0.9,0.9) {$2\alpha$};
        
\end{tikzpicture}
    \caption{Illustration of the domain $x_{h_j}A$.}
    \label{fig:domain_A}
\end{figure}
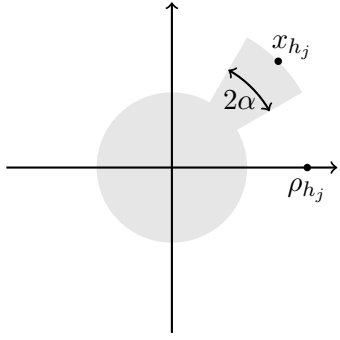

Note that $x_{h_j}A$ avoids the part of the real axis that includes
$\rho_{h_j}$ (see Figure~\ref{fig:domain_A}), so the function $Y_{h_j,1}(x)$ is
analytic in this region for all $j$ by construction since
$(x, Y_{h_j,1}(x))\in\Xi_\varepsilon^{(1,2)}$ whenever $x \in x_{h_j}A$. So we
have a sequence of functions $W_j(z) := Y_{h_j,1}(x_{h_j}z)$ that are all
analytic on $A$ and are uniformly bounded above by
$\eta_1 + \frac{\varepsilon}{2}$ by our choice of $x_{h_j}$. By Montel's
theorem, there is a subsequence of these functions (without loss of generality
the sequence itself) that converges locally uniformly and thus to an analytic
function $W_\infty$ on $A$. This function needs to satisfy the following:
\begin{itemize}
\item $W_{\infty}(0) = 0$, since $W_j(0) = 0$ for all $j$,
\item $W_{\infty}(z) = F_{\infty}(x_{\infty}z, W_{\infty}(z)) = x_{\infty}z(\Phi(x_{\infty}z + W_{\infty}(z))-1)$ for $z\in A$, since we have the uniform estimate
  \begin{equation*}W_j(z)=Y_{h_j,1}(x_{h_j}z)=F_{h_j}(x_{h_j}z, Y_{h_j,1}(x_{h_j}z))=F_{\infty}(x_{h_j}z, Y_{h_j,1}(x_{h_j}z)) + O(\lambda^h).\end{equation*}
This is also equivalent to
\begin{equation*}
x_{\infty}z + W_{\infty}(z) = x_{\infty}z\Phi(x_{\infty}z + W_{\infty}(z)).
\end{equation*}
\end{itemize}
These two properties imply that $W_{\infty}(z) =Y(x_{\infty}z) - x_{\infty}z$,
since $Y$ is the unique function that is analytic at $0$ and satisfies the
implicit equation $Y(x) = x \Phi(Y(x))$. Implicit differentiation of
$Y_{h_j,1}(x)=F_{h_j}(x, Y_{h_j, 1}(x))$ for $x \in x_{h_j}A$ yields
\begin{equation}\label{eq:Yhj-derivative}
  Y_{h_j,1}'(x) = \frac{\frac{\partial F_{h_j}}{\partial x} (x, Y_{h_j, 1}(x))}{1 - \frac{\partial F_{h_j}}{\partial y} (x, Y_{h_j, 1}(x))} = \frac{\frac{\partial F_{\infty}}{\partial x} (x, Y_{h_j, 1}(x)) + O(\lambda^{h_j})}{1 - \frac{\partial F_{\infty}}{\partial y} (x, Y_{h_j, 1}(x)) + O(\lambda^{h_j})}.
\end{equation}
Note that the numerator is uniformly bounded. Moreover, we recall again that
the only solution to the simultaneous equations
$F_{\infty}(x,y) = x(\Phi(y+x) - 1) = y$ and
$\frac{\partial }{\partial y}F_{\infty}(x,y) = x \Phi'(y+x) = 1$ with
$\abs{x} \leq \rho + \varepsilon$ and $\abs{x+y} \leq \tau + \varepsilon$ is
$(x,y) = (\rho,\tau-\rho) = (\rho,\eta_1)$ by our assumptions on $\Phi$. By
construction, there is a constant $\varepsilon_A > 0$ such that
$\abs{x - \rho} \geq \varepsilon_A$ whenever $x \in x_{h_j}A$ for some $j$.
The map $(x, y)\mapsto \| (F_{\infty}(x,y) - y, \frac{\partial }{\partial y}F_{\infty}(x,y) - 1)\|$
is continuous on the compact set
\[\mathcal{K}\coloneqq \{(x,y): \abs{x} \leq \rho + \varepsilon
  \text{ and }
  \abs{x+y} \leq \tau + \varepsilon
  \text{ and }
  \abs{x-\rho} \ge \varepsilon_A \}\]
and has no zero there (using the
Euclidean norm on $\C^2$). Therefore, it attains a minimum $\delta_A>0$ on
$\mathcal{K}$.

Now for $x \in x_{h_j}A$, $\abs{x} \leq \rho + \varepsilon$ holds by
assumption, as does $\abs{x + Y_{h_j, 1}(x)} \leq \tau +
\varepsilon$. Moreover,
$\abs{x - \rho} \geq
\varepsilon_A$. Thus we can conclude that $(x, Y_{h_j,1}(x)) \in \mathcal{K}$ and therefore
$\| (F_{\infty}(x,Y_{h_j, 1}(x)) - Y_{h_j, 1}(x), \frac{\partial
  F_{\infty}}{\partial y}(x,Y_{h_j, 1}(x)) - 1)\| \geq \delta_A$ for all such
$x$. Since
\begin{equation*}
F_{\infty}(x,Y_{h_j, 1}(x)) - Y_{h_j, 1}(x) = F_{h_j}(x,Y_{h_j, 1}(x)) - Y_{h_j, 1}(x) + O(\lambda^{h_j}) = O(\lambda^{h_j}),
\end{equation*}
this means that
$\abs{1 - \frac{\partial F_{\infty}}{\partial y} (x, Y_{h_j, 1}(x))} \geq
\delta_A - O(\lambda^{h_j})$, so that the denominator
in~\eqref{eq:Yhj-derivative} is bounded below by a positive constant for
sufficiently large $j$.

So we can conclude that $Y_{h_j,1}'(x)$ is uniformly bounded by a constant for
$x \in x_{h_j}A$, implying that $W_j'(z)$ is uniformly bounded (for all
$z \in A$ and all sufficiently large $j$) by a constant that is independent of
$j$. Therefore, $W_j(z)$ is a uniformly equicontinuous sequence of functions on
$\overline{A}$, the closure of $A$. By the Arzelà--Ascoli theorem, this implies
that $W_j(z) \to W_{\infty}(z) = Y(x_{\infty}z) - x_{\infty}z$ holds even for
all $z \in \overline{A}$, not only on $A$. In particular,
$y_{\infty} = W_{\infty}(1) = Y(x_{\infty}) - x_{\infty}$. Here, we have
$\abs{x_{\infty}} \leq \rho$ and
$\abs{y_{\infty}} = \eta_1 + \frac{\varepsilon}{2}$ by assumption. However,
\begin{equation*}
\abs{Y(x) - x} \leq \abs{Y(\rho) - \rho}  =\eta_1
\end{equation*}
holds for all $\abs{x} \leq \rho$ by the triangle inequality, so we finally reach a contradiction.
\end{proof}

We conclude this section with a summary of the results proven so far. The following proposition follows by combining the last two lemmata.

\begin{proposition}\label{prop:analytic_properties_of_Yh}
There exists a constant $\delta > 0$ such that $Y_{h,1}(x)$ can be continued analytically to the domain
\begin{equation*}
    \{x \in \C: \abs{x} \leq (1 + \delta)\abs{\rho_h}, \abs{\Arg(x/\rho_h - 1)} > \pi/4\}
\end{equation*}
for every sufficiently large $h$. Moreover, $Y_{h,1}(x)$ is then uniformly
bounded on this domain by a constant that is independent of $h$, and the
following singular expansion holds near the singularity:
  \begin{equation*}
Y_{h,1}(x) = \eta_{h,1} + a_h \Bigl(1-\frac{x}{\rho_h}\Bigr)^{1/2} + b_h\Bigl(1-\frac{x}{\rho_h}\Bigr) 
+ c_h \Bigl(1-\frac{x}{\rho_h}\Bigr)^{3/2} + O\Bigl((\rho_h-x)^2\Bigr),
  \end{equation*}
where the $O$-constant is independent of $h$ and $a_h,b_h,c_h$ converge at an exponential rate to $a,b,c$ respectively as $h \to \infty$.
\end{proposition}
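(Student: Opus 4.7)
The plan is to assemble the proposition directly from the two preceding lemmas, as indicated in the paper: all substantive analytic work has already been carried out in Lemma~\ref{lemma:continuation-to-pacman} and Lemma~\ref{lemma:expansion-around-the-singularity}, and what remains is a bookkeeping step to combine them and match the quantifiers.

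First, I would invoke Lemma~\ref{lemma:continuation-to-pacman} to obtain a $\delta > 0$ and an integer $h_0$ such that, for every $h \geq h_0$, $Y_{h,1}$ extends analytically to the pacman-shaped domain
\begin{equation*}
\{x \in \C : \abs{x} \leq (1+\delta)\abs{\rho_h},\ \abs{\Arg(x/\rho_h - 1)} > \pi/4\},
\end{equation*}
with the uniform bound $\abs{Y_{h,1}(x)} \leq \eta_1 + \varepsilon/2$. This immediately yields both the analytic continuation to the claimed domain and the uniform boundedness assertion (with a bound independent of $h$).

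Next, I would appeal to Lemma~\ref{lemma:expansion-around-the-singularity}, which (possibly after enlarging $h_0$) provides a $\delta_0 > 0$ and the singular expansion
\begin{equation*}
Y_{h,1}(x) = \eta_{h,1} + a_h\Bigl(1-\frac{x}{\rho_h}\Bigr)^{1/2} + b_h\Bigl(1-\frac{x}{\rho_h}\Bigr) + c_h\Bigl(1-\frac{x}{\rho_h}\Bigr)^{3/2} + O\Bigl((\rho_h - x)^2\Bigr)
\end{equation*}
for $x \in D_{\delta_0}(\rho_h)$ with $\abs{\Arg(x-\rho_h)} \geq \pi/4$, uniformly in $h$, together with the exponential convergence $a_h \to a$, $b_h \to b$, $c_h \to c$. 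Since $\rho_h$ is positive real, the sector condition $\abs{\Arg(x-\rho_h)} \geq \pi/4$ coincides with $\abs{\Arg(x/\rho_h - 1)} \geq \pi/4$, so the expansion is valid precisely on the intersection of $D_{\delta_0}(\rho_h)$ with the pacman domain above. By the identity principle, the function supplied by Lemma~\ref{lemma:continuation-to-pacman} agrees with the one to which the expansion applies on this overlap, so the singular expansion describes the behaviour of the continued function near $\rho_h$.

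The only assembly issue is to select a single $\delta$ (independent of $h$) and a single threshold $h_0$ so that both lemmas apply simultaneously; this is immediate since both conclusions are valid for all sufficiently large $h$ and one may always shrink $\delta$ to match. I do not expect any genuine obstacle here — the hard part was proving the two underlying lemmas (the contraction argument culminating in Lemma~\ref{lem:q_h-unique}, the implicit-function and Rouché arguments locating the singularity, and the Montel/Arzelà–Ascoli compactness argument in Lemma~\ref{lemma:continuation-to-pacman}), and the proposition is merely their joint statement in the form needed to apply Theorem~\ref{thm:pro-wags-1}.
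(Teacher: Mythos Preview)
Your proposal is correct and matches the paper's own treatment: the proposition is explicitly introduced as a summary that ``follows by combining the last two lemmata,'' and your assembly of Lemma~\ref{lemma:expansion-around-the-singularity} and Lemma~\ref{lemma:continuation-to-pacman} is exactly what is intended.
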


\begin{remark}\label{remark:periodic-case-1}
  Let $D=\gcd\{i\in\N\colon w_i\neq 0\}$ be the \emph{period} of $\Phi$. The
  purpose of this remark is to give indications how the results so far have to be
  adapted for the case $D>1$.

  If $D>1$, then for all trees of our simply generated family of trees, the
  number $n$ of vertices will be congruent to $1$ modulo $D$ because all
  outdegrees are multiples of $D$. Trivially, the same is true for all trees
  with maximum protection number $h$.

  By \cite[Remark~VI.17]{Flajolet-Sedgewick:ta:analy}, both $Y$ and $Y_{h,1}$
  have $D$ conjugate roots on its circle of convergence. Therefore, it is
  enough to study the positive root at the radius of convergence. Up to
  Theorem~\ref{thm:mult_rouche}, no changes are required. In
  Lemma~\ref{lemma:unique-singularity}, there are exactly $D$ solutions instead
  of exactly one solution to the simultaneous
  equations. Lemmata~\ref{lemma:factorisation-implicit-equation},
  \ref{lemma:munchhausen}, and \ref{lemma:expansion-around-the-singularity}
  analyse the behaviour of $Y_{h,1}$ around the dominant positive singularity
  and remain valid without any change. In the proof of
  Lemma~\ref{lemma:continuation-to-pacman}, we need to exclude balls around the
  conjugate roots. Proposition~\ref{prop:analytic_properties_of_Yh} must also
  be changed to exclude the conjugate roots.
\end{remark}

\section{The exponential case: \texorpdfstring{$w_1 \neq 0$}{w1 not 0}}\label{sec:case-exp}

\subsection{Asymptotics of the singularities}\label{sec:singularity-asymptotics-exp}

Proposition~\ref{prop:analytic_properties_of_Yh} that concluded the previous
section shows that condition~\eqref{bullet:thm-1-2} of
Theorem~\ref{thm:pro-wags-1} is satisfied (with $\alpha = \frac12$) by the
generating functions $Y_{h,1}$ (and thus also $Y_{h,0}$, since
$Y_{h,0}(x) = Y_{h,1}(x) + x$).  It remains to study the behaviour of the
singularity $\rho_h$ of $Y_{h,0}$ and $Y_{h,1}$ to make the theorem
applicable. As it turns out, condition~\eqref{bullet:thm-1-1} of
Theorem~\ref{thm:pro-wags-1} holds precisely if vertices of outdegree $1$ are
allowed in our simply generated family of trees. In terms of the weight
generating function $\Phi$, this can be expressed as $w_1 = \Phi'(0) \neq
0$. Starting with Lemma~\ref{lem:eta-asymp}, we will assume that this holds. The case where
vertices of outdegree $1$ cannot occur (equivalently, $w_1 = \Phi'(0) = 0$) is
covered in Section~\ref{sec:case-double-exp}.

Let us define the auxiliary quantities $\eta_{h,k} := Y_{h,k}(\rho_h)$ for all
$0 \leq k \leq h$. We know that these must exist and be finite for all
sufficiently large $h$. Since the coefficients of $Y_{h,k}$ are nonincreasing
in $k$ in view of the combinatorial interpretation, we must have
\begin{equation}\label{eq:eta-monotonicity}
\eta_{h,0} \geq \eta_{h,1} \geq \cdots \geq \eta_{h,h}.
\end{equation}
Note also that the following system of equations holds:
\begin{align}
  \eta_{h, 0} & = \eta_{h, 1} + \rho_h,\label{eq:syst1}\\
  \eta_{h, k} & = \rho_h\Phi(\eta_{h, k-1}) - \rho_h\Phi(\eta_{h, h}) \qquad \text{for } 1 \leq k \leq h,\label{eq:syst2}
\end{align}
in view of~\eqref{eq:func-eq-1} and~\eqref{eq:func-eq-2}, respectively. Since
$Y_{h,1}$ is singular at $\rho_h$ by assumption, the Jacobian determinant of
the system that determines $Y_{h,0},Y_{h,1},\ldots,Y_{h,h}$ needs to vanish (as
there would otherwise be an analytic continuation by the analytic implicit
function theorem). This determinant is given by
\begin{equation*}
\begin{vmatrix}
 1 & -1 & 0 & \cdots & 0 & 0 \\ 
-\rho_h \Phi'(\eta_{h,0}) & 1 & 0 & \cdots & 0 & \rho_h \Phi'(\eta_{h,h}) \\
0 & -\rho_h \Phi'(\eta_{h,1}) & 1 & \cdots & 0 & \rho_h \Phi'(\eta_{h,h}) \\
\vdots & \vdots & \vdots & \ddots & \vdots & \vdots \\
0 & 0 & 0 & \cdots & -\rho_h \Phi'(\eta_{h,h-1}) & 1 + \rho_h \Phi'(\eta_{h,h})
\end{vmatrix}\,.
\end{equation*}
Using column expansion with respect to the last column to obtain the determinant, we find that this simplifies to 
\begin{equation}\label{eq:Jacdet}
\prod_{j=1}^{h} \big( \rho_h \Phi'(\eta_{h,j}) \big) + \big(1 - \rho_h \Phi'(\eta_{h,0}) \big) \Big( 1 + \sum_{k=2}^{h} \prod_{j=k}^{h} \big( \rho_h \Phi'(\eta_{h,j}) \big) \Big) = 0.
\end{equation}

We will now use~\eqref{eq:syst1},~\eqref{eq:syst2}, and~\eqref{eq:Jacdet} to
determine an asymptotic formula for $\rho_h$. Throughout this section, $B_i$'s
will always be positive constants with $B_i < 1$ that depend on the specific
family of simply generated trees, but nothing else.

\begin{lemma}\label{lem:exp-bound}
    There exist positive constants $C$ and $B_1$ with $B_1 < 1$ such that $\eta_{h,k} \leq C B_1^k$ for all sufficiently large $h$ and all $k$ with $0 \leq k \leq h$.
\end{lemma}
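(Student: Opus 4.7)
My plan is to dominate $\eta_{h,k}$ by iterates of a single one-variable map that is strictly contractive at $0$. From the recursion~\eqref{eq:syst2} together with $\eta_{h,h} \ge 0$ (hence $\Phi(\eta_{h,h}) \ge \Phi(0) = 1$) we obtain
\begin{equation*}
\eta_{h,k} \le \rho_h\bigl(\Phi(\eta_{h,k-1}) - 1\bigr) =: M_h(\eta_{h,k-1})
\end{equation*}
for $1 \le k \le h$. Now $M_h(0) = 0$ and $M_h'(0) = \rho_h \Phi'(0)$, and in the case $w_1 = \Phi'(0) \ne 0$ one has $\rho \Phi'(0) < 1$: indeed, $\Phi'$ is strictly increasing because $w_j > 0$ for some $j \ge 2$ by the standing assumption in Section~\ref{sec:sg_trees_basic}, so $\Phi'(0) < \Phi'(\tau) = 1/\rho$. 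Together with Lemma~\ref{lemma:singularity-location} this gives $M_h'(0) \to \rho \Phi'(0) < 1$ as $h \to \infty$, and I fix once and for all some $B_1$ with $\rho \Phi'(0) < B_1 < 1$.

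The first step is to use the Taylor expansion $\Phi(y) - 1 = \Phi'(0) y + O(y^2)$ to choose $\delta > 0$ so small that $M_h(y) \le B_1 y$ holds for all $y \in [0, \delta]$ and all sufficiently large $h$; the error is uniform in $h$ because $\rho_h \to \rho$ and the remainder is controlled on a fixed compact subset of the disk of analyticity of $\Phi$. The crux of the argument is then to show that $\eta_{h,k}$ enters $[0, \delta]$ at some index $k_1$ that is independent of $h$. For this I examine $f_\infty(y) := y - \rho(\Phi(y) - 1)$, which is concave (because $\Phi$ is convex) with $f_\infty(0) = 0$ and $f_\infty(\tau) = \rho > 0$ (using $\tau = \rho \Phi(\tau)$). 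By the chord characterisation of concavity, $f_\infty(y) \ge \rho y/\tau > 0$ on $(0, \tau]$, so $f_\infty \ge \rho \delta/\tau =: \gamma$ on the compact interval $[\delta, \eta_1 + \varepsilon]$. The convergence $\rho_h \to \rho$ from Lemma~\ref{lemma:singularity-location} then upgrades this to $y - M_h(y) \ge \gamma/2$ on the same interval for every sufficiently large $h$. Since Lemma~\ref{lemma:singularity-location} also yields a uniform upper bound $\eta_{h,0} \le M_0$ for some constant $M_0$ depending only on the family, the sequence $\eta_{h,k}$ must fall below $\delta$ within $k_1 := \lceil 2 M_0 / \gamma \rceil$ steps, uniformly in $h$.

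Finally, iterating $M_h(y) \le B_1 y$ from $\eta_{h,k_1} \le \delta$ gives $\eta_{h,k} \le \delta B_1^{k-k_1}$ for $k \ge k_1$, while the monotonicity~\eqref{eq:eta-monotonicity} bounds $\eta_{h,k} \le \eta_{h,0} \le M_0$ for $0 \le k < k_1$. Choosing $C := (M_0 + \delta) B_1^{-k_1}$ absorbs both regimes into the single estimate $\eta_{h,k} \le C B_1^k$ claimed in the lemma. The one genuinely nontrivial point is the uniform-in-$h$ time bound $k_1$; everything else is bookkeeping. The essential structural input is the strict inequality $\rho \Phi'(0) < 1$, which is precisely where the hypothesis $w_1 \ne 0$ is used and which separates this argument from the doubly-exponential regime treated in Section~\ref{sec:case-double-exp}.
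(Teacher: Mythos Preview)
Your proof is correct and takes a genuinely different route from the paper's. You linearize the map $M_h$ at $0$, obtain a contraction constant $B_1 > \rho\Phi'(0)$ valid only on a small interval $[0,\delta]$, and then spend most of the effort showing, via a concavity argument for $f_\infty$, that the iterates enter $[0,\delta]$ within a bounded number of steps. The paper instead applies the mean value theorem globally: from $\eta_{h,k} = \rho_h(\Phi(\eta_{h,k-1}) - \Phi(\eta_{h,h}))$ one gets $\eta_{h,k} \le \rho_h\Phi'(\eta_{h,1})\,\eta_{h,k-1}$ for every $k \ge 2$ (the intermediate point in the mean value theorem lies below $\eta_{h,k-1} \le \eta_{h,1}$), and since $\eta_{h,1} \to \tau - \rho$ one has $\rho_h\Phi'(\eta_{h,1}) \le \rho\Phi'(\tau - \rho/2) < \rho\Phi'(\tau) = 1$ for large $h$. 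This gives a one-step contraction with $B_1 = \rho\Phi'(\tau - \rho/2)$ and no burn-in phase at all. Two small points on your version: your decrement estimate is established on $[\delta, \eta_1+\varepsilon]$, but you launch the iteration from $\eta_{h,0} \to \tau > \eta_1 + \varepsilon$; this is harmless since $\eta_{h,1} \le \eta_1 + \varepsilon$ for large $h$ by Lemma~\ref{lemma:singularity-location}, so one simply starts the decrement count at $k=1$. More importantly, your closing remark that $w_1 \ne 0$ is essential here is incorrect---your own argument goes through verbatim (indeed more easily) when $\Phi'(0) = 0$, and the paper explicitly reuses this very lemma in that regime (see the first paragraph of Section~\ref{sec:singularity-asymptotics-double-exp}).
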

    
\begin{proof}
  Since we already know that $\eta_{h,1}$ converges to $\tau - \rho$ and that $\rho_h$ converges to $\rho$,
  $\eta_{h,0}$ converges to $\tau$ by~\eqref{eq:syst1}. By the monotonicity
  property~\eqref{eq:eta-monotonicity}, all $\eta_{h,k}$ must therefore be
  bounded by a single constant $M$ for sufficiently large $h$. Since
  $\eta_{h,1}$ converges to $\tau - \rho$, we must have that $\rho_h\Phi'(\eta_{h, 1})$ converges 
  to $\rho\Phi'(\tau-\rho)$. Therefore,  $\rho_h\Phi'(\eta_{h, 1}) \le \rho\Phi'(\tau - \rho/2)$ 
  for sufficiently large $h$. It follows that
  $\rho_h \Phi'(\eta_{h,1}) \leq \rho \Phi'(\tau - \rho/2) < \rho \Phi'(\tau) =
  1$. For all $1 \leq j \leq h$, we now have
\begin{equation*}
\eta_{h,j} = \rho_h \Phi(\eta_{h,j-1}) - \rho_h \Phi(\eta_{h,h}) \leq \rho_h \Phi'(\eta_{h,j-1}) (\eta_{h,j-1} - \eta_{h,h}) \leq \rho_h \Phi'(\eta_{h,1}) \eta_{h,j-1}.
\end{equation*}
    Thus by induction
\begin{equation*}
\eta_{h,k} \leq \eta_{h,1} \big( \rho_h \Phi'(\eta_{h,1}) \big)^{k-1} \leq M \big( \rho \Phi'(\tau - \rho/2) \big)^{k-1}.
\end{equation*}
 This proves the desired inequality for sufficiently large $h$ and $1 \leq k \leq h$ with $B_1 = \rho \Phi'(\tau - \rho/2) < 1$, and we are done.
\end{proof}

With this bound, we will be able to refine the estimates for the system of
equations, leading to better estimates for $\rho_h$ and $\eta_{h, 0}$.  Recall
from Lemma~\ref{lemma:singularity-location} that $\rho_h$ and $\eta_{h,1}$
converge to their respective limits $\rho$ and $\tau - \rho = \eta_1$ (at
least) exponentially fast. Since $\eta_{h,0} = \eta_{h,1} + \rho_h$ by
\eqref{eq:syst1}, this also applies to $\eta_{h,0}$. We show that an analogous
statement also holds for $\eta_{h,k}$ with arbitrary $k$. In view
of~\eqref{eq:syst2}, it is natural to expect that $\eta_{h,k} \to \eta_k$,
where $\eta_k$ is defined recursively as follows: $\eta_0 = \tau$
and, for $k > 0$,
$\eta_k = \rho \Phi(\eta_{k-1}) - \rho$, which also 
coincides with our earlier definition of $\eta_1 = \tau - \rho$.
This is proven in the following lemma.

\begin{lemma}\label{lem:exp-convergence}
For a suitable constant $B_2 < 1$ and sufficiently large $h$, we have $\rho_h = \rho + O(B_2^h)$ and $\eta_{h,k} = \eta_k + O(B_2^h)$ for all $k$ with $0 \leq k \leq h$, uniformly in $k$.
\end{lemma}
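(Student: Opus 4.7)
The plan is to set $\delta_{h,k} := \eta_{h,k} - \eta_k$ and $\epsilon_h := \rho_h - \rho$, and to derive a linear recursion for $\delta_{h,k}$ that I then iterate. Subtracting the defining recurrences~\eqref{eq:syst2} and $\eta_k = \rho\Phi(\eta_{k-1}) - \rho$, inserting $\rho_h\Phi(\eta_{k-1}) - \rho_h\Phi(\eta_{k-1})$, and applying the mean value theorem yields
\[
 \delta_{h,k} = \rho_h\Phi'(\xi_{h,k})\,\delta_{h,k-1} + \epsilon_h\bigl(\Phi(\eta_{k-1}) - 1\bigr) - \rho_h\bigl(\Phi(\eta_{h,h}) - 1\bigr)
\]
for some $\xi_{h,k}$ between $\eta_{k-1}$ and $\eta_{h,k-1}$, valid for $1 \le k \le h$. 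Since \eqref{eq:syst1} gives $\delta_{h,0} = \delta_{h,1} + \epsilon_h$, it suffices to bound $\delta_{h,k}$ uniformly for $k \ge 1$.

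The key is a uniform contraction estimate: for $k \ge 2$ and sufficiently large $h$, the multiplier $a_{h,k} := \rho_h\Phi'(\xi_{h,k})$ should be bounded by a fixed $\beta < 1$. The sequence $\eta_k$ is positive and strictly decreasing with $\eta_1 = \tau - \rho$ (since $f(L) := \rho\Phi(L) - L - \rho$ is strictly decreasing on $[0,\tau]$), and by \eqref{eq:eta-monotonicity} combined with Lemma~\ref{lemma:singularity-location}, $\eta_{h,k-1}$ lies arbitrarily close to or below $\eta_1$ for large $h$. The standing assumption that some $w_j$ with $j \ge 2$ is positive makes $\Phi'$ strictly increasing on $[0,\tau]$, so $\rho\Phi'(\eta_1) < \rho\Phi'(\tau) = 1$, and I can set $\beta := \tfrac12(1 + \rho\Phi'(\eta_1))$. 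A direct induction from $\eta_k \le \rho\Phi'(\eta_{k-1})\eta_{k-1}$ gives $\eta_k = O(B_1^k)$, which combined with $\eta_{h,h} = O(B_1^h)$ from Lemma~\ref{lem:exp-bound} and $\epsilon_h = O(\lambda^{h/2})$ from Lemma~\ref{lemma:singularity-location} bounds the error term in the recursion by $O(\lambda^{h/2} B_1^{k-1}) + O(B_1^h)$.

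Starting from the base case $\delta_{h,1} = O(\lambda^{h/2})$ (Lemma~\ref{lemma:singularity-location}), the telescoped recursion writes $\delta_{h,k}$ as a product $\bigl(\prod_{j=2}^{k}a_{h,j}\bigr)\delta_{h,1}$ plus a sum in which each error term $E_{h,j}$ is weighted by $\prod_{i=j+1}^{k}a_{h,i}$. The uniform bound $|a_{h,j}| \le \beta$ turns both pieces into geometric sums, and setting $B_2 := \max(\lambda^{1/2}, B_1) < 1$ yields $|\delta_{h,k}| = O(B_2^h)$ uniformly in $k$; the claim on $\rho_h$ is already part of Lemma~\ref{lemma:singularity-location}. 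The main obstacle is the uniform contraction bound $a_{h,k} \le \beta$---after that, the iteration is routine. The strict inequality $\rho\Phi'(\eta_1) < 1$ is where the assumption that $\Phi$ is not affine (some $w_j$ with $j \ge 2$ positive), built into Section~\ref{sec:sg_trees_basic}, is essential.
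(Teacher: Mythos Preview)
Your proposal is correct and follows essentially the same approach as the paper: set $\delta_{h,k} = \eta_{h,k} - \eta_k$, subtract the two recursions to obtain a linear recurrence $\delta_{h,k} = (\text{contraction factor})\,\delta_{h,k-1} + O(B_2^h)$, bound the contraction factor by a constant less than $1$ using $\rho\Phi'(\eta_1) < \rho\Phi'(\tau) = 1$, and iterate. The only cosmetic differences are that the paper keeps $\rho$ rather than $\rho_h$ in the contraction factor (absorbing the difference into the $O(B_2^h)$ error) and uses the specific constant $B_1 = \rho\Phi'(\tau-\rho/2)$ already introduced in Lemma~\ref{lem:exp-bound}, whereas you carry $\rho_h$ and choose $\beta = \tfrac12(1+\rho\Phi'(\eta_1))$; both choices lead to the same conclusion.
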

\begin{proof}
  For a suitable choice of $B_2$, the estimate for $\rho_h$ has been established by
  Lemma~\ref{lemma:singularity-location}, as
  has the estimate for $\eta_{h,k}$ in the cases where $k = 0$ and $k = 1$. Set
  $\delta_{h, k} = \eta_{h, k} - \eta_k$.
Since $\eta_{h,h} \leq C B_1^h$ by Lemma~\ref{lem:exp-bound}, we have $\Phi(\eta_{h,h}) = \Phi(0) + O(B_1^h) = 1 + O(B_1^h)$. Without loss of generality, suppose that $B_2 \geq B_1$. Then, using~\eqref{eq:syst2}, we obtain
  \begin{align*}
    \eta_{h, k} & = \rho_h\Phi(\eta_{h, k-1}) - \rho_h\Phi(\eta_{h, h}) \\
    & = (\rho + O(B_2^h))\Phi(\eta_{k-1}+ \delta_{h, k-1}) - (\rho + O(B_2^h))(1 + O(B_1^h))\\
    & = \rho(\Phi(\eta_{k-1}) + \Phi'(\xi_{h, k-1})\delta_{h, k-1}) - \rho + O(B_2^h) \\
    & = \eta_k + \rho \Phi'(\xi_{h,k-1}) \delta_{h,k-1} + O(B_2^h)
  \end{align*}
  where $\xi_{h, k-1}$ is between $\eta_{k-1}$ and $\eta_{h, k-1}$ (by the mean
  value theorem) and the $O$-constant is independent of $k$. Let $M$ be this
  $O$-constant. We already know (compare the proof of
  Lemma~\ref{lem:exp-bound}) that
  $\eta_{h,k-1} \leq \eta_{h,1} \leq \tau - \rho/2$ for every $k \geq 2$ if $h$
  is sufficiently large. Likewise, it is easy to see that $\eta_k$ is
  decreasing in $k$, hence $\eta_{k-1} \leq \eta_1 = \tau - \rho$. Thus,
  $\xi_{h,k-1} \leq \tau - \rho/2$ and
  $\rho\Phi'(\xi_{h, k-1}) \leq \rho\Phi'(\tau - \rho/2) = B_1 < 1$. So we
  have, for every $k > 1$,
\begin{equation*}
\abs{\delta_{h,k}} = \abs{\eta_{h,k} - \eta_k} \leq B_1 \abs{\delta_{h,k-1}} + MB_2^h.
\end{equation*}
Iterating this inequality yields
\begin{equation*}
\abs{\delta_{h,k}} \leq B_1^{k-1} \abs{\delta_{h,1}} + (1 + B_1 + \cdots + B_1^{k-2}) MB_2^h \leq \abs{\delta_{h,1}} + \frac{MB_2^h}{1-B_1},
\end{equation*}
and the desired statement follows.
\end{proof}

From Lemma~\ref{lem:exp-bound} and the fact that $\eta_{h,k} \to \eta_k$, we
trivially obtain $\eta_k \leq C \cdot B_1^k$, with the same constants $B_1$ and
$C$ as in Lemma~\ref{lem:exp-bound}. In fact, we can be more precise, and this 
is demonstrated in the lemma that follows. Since the
expression $\rho \Phi'(0)$ occurs frequently in the following, we set
$\zeta := \rho \Phi'(0)$. Recall that we assume $\Phi'(0) \neq 0$ until the end of this section.

\begin{lemma}\label{lem:eta-asymp}
The limit $\lambda_1 := \lim_{k \to \infty} \zeta^{-k} \eta_k$ exists. Moreover, we have 
\begin{equation*}
\eta_k = \lambda_1 \zeta^k (1 + O(B_1^k)),
\end{equation*}
with $B_1$ as in Lemma~\ref{lem:exp-bound}.
\end{lemma}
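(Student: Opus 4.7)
The plan is to exploit the recursion $\eta_k = \rho(\Phi(\eta_{k-1})-1)$. Writing $\Phi(t) = 1 + \Phi'(0)t + r(t)$ with remainder $r(t) = \sum_{j \geq 2} w_j t^j \geq 0$, the recursion becomes
\[
\eta_k = \zeta\,\eta_{k-1} + \rho\, r(\eta_{k-1}),
\]
where the remainder is nonnegative and of size $O(\eta_{k-1}^2)$. The natural substitution $u_k := \zeta^{-k}\eta_k$ then gives $u_k - u_{k-1} = \zeta^{-k}\rho\, r(\eta_{k-1}) \geq 0$, so the sequence $(u_k)$ is nondecreasing, and the lemma reduces to showing that $u_k$ converges with an exponentially small error of order $B_1^k$.

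To bound the increments of $u_k$ I would first bootstrap the crude bound of Lemma~\ref{lem:exp-bound}. Passing to the limit $h \to \infty$ in that lemma gives $\eta_k \leq C B_1^k$, so in particular $\eta_k \to 0$. From the recursion, $\eta_k/\eta_{k-1} = \zeta + O(\eta_{k-1}) \to \zeta$, and hence for any $\delta>0$ the sharpened bound $\eta_k = O((\zeta+\delta)^k)$ follows by induction for all sufficiently large $k$. Substituting this into the increment formula gives
\[
u_k - u_{k-1} = O\bigl(\zeta^{-k}\eta_{k-1}^2\bigr) = O\bigl(((\zeta+\delta)^2/\zeta)^k\bigr).
\]

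The key step is the calibration of $\delta$. Because $\Phi'$ is strictly increasing (our standing assumption that $w_j>0$ for some $j \geq 2$ ensures this), we have the strict inequality $B_1 = \rho\Phi'(\tau - \rho/2) > \rho\Phi'(0) = \zeta$. Therefore there exists $\delta > 0$ small enough that $(\zeta+\delta)^2 \leq \zeta B_1$, and with this choice $u_k - u_{k-1} = O(B_1^k)$. The tail sum $\sum_{j>k}(u_j - u_{j-1})$ is then a convergent geometric series of the same order, which simultaneously yields the existence of $\lambda_1 := \lim_k u_k$ and the rate $u_k - \lambda_1 = O(B_1^k)$. Monotonicity gives $\lambda_1 \geq u_0 = \tau > 0$, and rearrangement produces the asserted identity $\eta_k = \lambda_1 \zeta^k(1 + O(B_1^k))$.

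The only subtle point is precisely this calibration of $\delta$: without the bootstrap argument, substituting the raw bound $\eta_{k-1} = O(B_1^{k-1})$ into the increment would only yield $u_k - u_{k-1} = O((B_1^2/\zeta)^k)$, which generally fails to decay because $B_1^2/\zeta$ can exceed $1$ (e.g., in the Cayley tree case $B_1 \approx e^{-1/(2e)}$ while $\zeta = 1/e$). Leveraging the ratio $\eta_k/\eta_{k-1} \to \zeta$ to replace $B_1$ by $\zeta+\delta$ in the intermediate bound is what allows the final error rate to align with $B_1^k$ and is in my view the main obstacle.
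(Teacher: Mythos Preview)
Your proof is correct, but it takes a somewhat different route from the paper's. You work additively with the increments $u_k - u_{k-1} = \zeta^{-k}\rho\,r(\eta_{k-1})$, and you correctly diagnose the obstacle: the crude bound $\eta_{k-1} = O(B_1^{k-1})$ only gives increments of order $(B_1^2/\zeta)^k$, which need not decay. Your bootstrap (first improve $\eta_k = O((\zeta+\delta)^k)$ from the ratio $\eta_k/\eta_{k-1}\to\zeta$, then calibrate $\delta$ using $B_1 > \zeta$) neatly resolves this, and the monotonicity observation $u_k \geq u_0 = \tau$ gives positivity of $\lambda_1$ for free.

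The paper instead works multiplicatively: from $\eta_k = \zeta\eta_{k-1}(1+O(\eta_{k-1}))$ and the crude bound $\eta_{k-1} = O(B_1^{k-1})$ one gets $\eta_k/(\zeta\eta_{k-1}) = 1 + O(B_1^k)$ directly, so $\lambda_1 = \eta_0\prod_{j\ge 1}\eta_j/(\zeta\eta_{j-1})$ converges and the tail product gives the error $1+O(B_1^k)$. The point is that in the ratio formulation the correction is $O(\eta_{k-1})$ rather than $O(\zeta^{-k}\eta_{k-1}^2)$, so the troublesome factor $\zeta^{-k}\eta_{k-1}$ never appears and no bootstrap is needed. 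Your approach is a little longer but perfectly valid; the paper's is slicker precisely because it sidesteps the obstacle you took the trouble to overcome.
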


\begin{proof}
Recall that we defined the sequence $(\eta_k)_{k \geq 0}$ by $\eta_0 = \tau$ and $\eta_k = \rho \Phi(\eta_{k-1}) - \rho$ for $k\ge 1$.
Using Taylor expansion, we obtain
\begin{equation*}
\eta_k = \rho \Phi'(0) \eta_{k-1} (1 + O(\eta_{k-1})) = \zeta \eta_{k-1} (1 + O(\eta_{k-1})).
\end{equation*}
Since we already know that $\eta_{k-1} \leq C \cdot B_1^{k-1}$, this implies that
\begin{equation*}
\eta_k = \zeta \eta_{k-1} (1 + O(B_1^k)).
\end{equation*}
Now it follows that the infinite product 
\begin{equation*}
\lambda_1 = \eta_0 \prod_{j \geq 1} \frac{\eta_j}{\zeta \eta_{j-1}} = \lim_{k \to \infty} \eta_0 \prod_{j = 1}^k \frac{\eta_j}{\zeta \eta_{j-1}} = \lim_{k \to \infty} \zeta^{-k} \eta_k
\end{equation*}
converges. The error bound follows from noting that
\begin{equation*}
\zeta^{-k} \eta_k = \lambda_1 \prod_{j \geq k+1} \frac{\zeta \eta_{j-1}}{\eta_j} = \lambda_1 \prod_{j \geq k+1} (1 + O(B_1^j)). \qedhere
\end{equation*}
\end{proof}

Next, we consider the expression in~\eqref{eq:Jacdet} and determine the asymptotic behaviour of its parts.
\begin{lemma}\label{lem:sum-product-refined}
  For large enough $h$ and a fixed constant $B_3 < 1$, we have
  \begin{equation*}
    1 + \sum_{k = 2}^{h}\prod_{j = k}^{h} (\rho_h \Phi'(\eta_{h, j}))
    = \frac{1}{1 - \zeta} + O(B_3^{h})
  \end{equation*}
and
    \begin{equation*}
      \prod_{j=1}^{h}\rho_h\Phi'(\eta_{h, j}) = \lambda_2 \zeta^h (1 + O(B_3^h)),
    \end{equation*}
    where $\lambda_2 := \prod_{j\geq 1}\frac{\Phi'(\eta_j)}{\Phi'(0)}$. 
\end{lemma}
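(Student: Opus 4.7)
The plan is to first reduce both quantities to analogous expressions involving only the limiting values $\eta_j$ and $\rho$, using the uniform exponential convergence $\rho_h=\rho+O(B_2^h)$ and $\eta_{h,j}=\eta_j+O(B_2^h)$ supplied by Lemma~\ref{lem:exp-convergence}. Since $\Phi'$ is analytic and the factors $\rho \Phi'(\eta_j)$ are bounded below by $\zeta=\rho\Phi'(0)>0$ (recall we are in the case $\Phi'(0)\neq 0$, and $\Phi'$ is nondecreasing on the nonnegative reals), the additive error on each factor translates to a multiplicative factor:
\begin{equation*}
\rho_h\Phi'(\eta_{h,j}) = \rho\Phi'(\eta_j)\bigl(1+O(B_2^h)\bigr),
\end{equation*}
uniformly in $j\in\{1,\dots,h\}$.

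For the product identity, this gives
\begin{equation*}
\prod_{j=1}^{h}\rho_h\Phi'(\eta_{h,j}) = \bigl(1+O(hB_2^h)\bigr)\prod_{j=1}^{h}\rho\Phi'(\eta_j) = \bigl(1+O(B_3^h)\bigr)\zeta^h\prod_{j=1}^{h}\frac{\Phi'(\eta_j)}{\Phi'(0)},
\end{equation*}
for any fixed $B_3\in(B_2,1)$. By Lemma~\ref{lem:eta-asymp} we have $\Phi'(\eta_j)/\Phi'(0) = 1+O(\eta_j) = 1+O(B_1^j)$, so the infinite product defining $\lambda_2$ converges absolutely, and truncating it at $j=h$ introduces a factor $1+O(B_1^h)$. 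Combining these estimates (after possibly enlarging $B_3$) yields $\prod_{j=1}^{h}\rho_h\Phi'(\eta_{h,j}) = \lambda_2\zeta^h(1+O(B_3^h))$.

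For the sum, set $P_k\coloneqq\prod_{j=k}^{h}\rho_h\Phi'(\eta_{h,j})$ and, by the same reasoning, write
\begin{equation*}
P_k = \zeta^{h-k+1}\,\mu_k\bigl(1+O(B_3^h)\bigr), \qquad \mu_k\coloneqq \prod_{j=k}^{\infty}\frac{\Phi'(\eta_j)}{\Phi'(0)} = 1+O(B_1^k).
\end{equation*}
After substituting $i=h-k+1$, the sum $\sum_{k=2}^{h}P_k$ becomes $\sum_{i=1}^{h-1}\zeta^i\mu_{h-i+1}(1+O(B_3^h))$. The main term is the geometric series $\sum_{i=1}^{\infty}\zeta^i = \zeta/(1-\zeta)$ plus tail error $O(\zeta^h)$; adding the constant $1$ produces $1/(1-\zeta)$. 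The correction coming from $\mu_{h-i+1}=1+O(B_1^{h-i+1})$ contributes at most $O\bigl(\sum_{i=1}^{h-1}\zeta^i B_1^{h-i+1}\bigr)$, which one checks by comparing $\zeta$ to $B_1$: in all three cases ($\zeta<B_1$, $\zeta=B_1$, $\zeta>B_1$) the bound is $O(B_3^h)$ for a suitable $B_3<1$ (in the last case we may simply take $B_3$ slightly above $\zeta$ to absorb the polynomial factor $h$). This completes the asymptotic formula $\frac{1}{1-\zeta}+O(B_3^h)$.

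The only genuinely delicate point is the passage from additive errors on individual factors to a uniform multiplicative error on a product of $\Theta(h)$ factors; this is harmless here because the $\rho\Phi'(\eta_j)$ are uniformly bounded below by $\zeta>0$, which is precisely where the hypothesis $\Phi'(0)\neq 0$ of this section enters. The rest is a careful but routine bookkeeping of geometric-series tails and of the rates $B_1$, $B_2$, $B_3$.
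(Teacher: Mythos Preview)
Your proof is correct and follows essentially the same approach as the paper. The only organizational difference is that, for the sum, the paper goes directly from $\Phi'(\eta_{h,j})$ to $\Phi'(0)+O(B_1^j)$ using the bound $\eta_{h,j}\le CB_1^j$ of Lemma~\ref{lem:exp-bound}, whereas you first pass through $\eta_j$ via Lemma~\ref{lem:exp-convergence} and then introduce the tail products $\mu_k$; this is a minor repackaging of the same estimates. (One small remark: your case analysis on $\zeta$ versus $B_1$ is unnecessary here, since $\zeta=\rho\Phi'(0)<\rho\Phi'(\tau-\rho/2)=B_1$ by strict monotonicity of $\Phi'$ on the positive reals.)
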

\begin{proof}
Note that
\begin{equation*}
\prod_{j = k}^{h} (\rho_h \Phi'(\eta_{h, j})) = \rho_h^{h-k+1} \prod_{j = k}^{h} \Phi'(\eta_{h,j}).
\end{equation*}
In view of Lemma~\ref{lem:exp-convergence}, we have $\rho_h^{h-k+1} = \rho^{h-k+1} (1+O(B_2^h))^{h-k+1} = \rho^{h-k+1}(1+ O(hB_2^h))$, uniformly in $k$.
Moreover, Lemma~\ref{lem:exp-bound} yields $\Phi'(\eta_{h,j}) = \Phi'(0) + O(\eta_{h,j}) = \Phi'(0) + O(B_1^j)$, uniformly in $h$. Thus
\begin{equation*}
\prod_{j = k}^{h} \Phi'(\eta_{h,j}) = \Phi'(0)^{h-k+1} \prod_{j=k}^h (1 + O(B_1^j)) = \Phi'(0)^{h-k+1} (1 + O(B_1^k)).
\end{equation*}
Hence the expression simplifies to
  \begin{equation*}
    1 + \sum_{k = 2}^{h}\prod_{j = k}^{h} (\rho_h \Phi'(\eta_{h, j})) = 1 + (1 + O(hB_2^h))  \sum_{k = 2}^h \zeta^{h-k+1}(1 + O(B_1^k)).
  \end{equation*}
  Since $\zeta < 1$ and $B_1 < 1$, we can simply evaluate the geometric series,
and the expression further simplifies to 
  \begin{equation*}
    1 + \sum_{k = 2}^h \zeta^{h-k+1} + O(B_3^h) =
\frac{1-\zeta^h}{1-\zeta} + O(B_3^h) = \frac{1}{1-\zeta} + O(B_3^h)
  \end{equation*}
  for an appropriately chosen $B_3 < 1$.  This proves the first statement.  For
  the second statement, we also use Lemma~\ref{lem:exp-convergence}, along with
  the monotonicity of $\Phi'$ and the assumption that $\Phi'(0) \neq 0$, which
  implies that $\Phi'(\eta_j)$ is bounded away from $0$. This yields
    \begin{align*}
      \prod_{j=1}^{h}\rho_h\Phi'(\eta_{h, j}) = \prod_{j=1}^{h}(\rho+O(B_2^h))(\Phi'(\eta_j)+O(B_2^h)) = 
      \rho^h (1 + O(hB_2^h))\prod_{j=1}^h\Phi'(\eta_j).
    \end{align*}
    Since $\Phi'(\eta_j) = \Phi'(0) + O(\zeta^j)$ (by Lemma~\ref{lem:eta-asymp}), the product that defines $\lambda_2$ converges. So we can rewrite the product term as 
    \begin{equation*}
        \prod_{j=1}^h\Phi'(\eta_j) = \Phi'(0)^h \prod_{j=1}^h\frac{\Phi'(\eta_j)}{\Phi'(0)} = \lambda_2 \Phi'(0)^h \prod_{j\geq h+1}\frac{\Phi'(0)}{\Phi'(\eta_j)},
    \end{equation*}
    and thus, using again the estimate $\Phi'(\eta_j) = \Phi'(0) + O(\zeta^j)$ on the remaining product,
    \begin{align*}
      \prod_{j=1}^{h}\rho_h\Phi'(\eta_{h, j})
      & = \lambda_2 \zeta^h (1 + O(hB_2^h))(1+O(\zeta^h)).
    \end{align*}
This proves the desired formula for a suitable choice of $B_3$.
\end{proof}

\begin{corollary}\label{cor:Phi-prime}
    For sufficiently large $h$, we have that 
    \begin{equation}\label{eq:rho-Phi-prime-exp-fast-plus-error}
        \rho_h\Phi'(\eta_{h, 0}) = 1 + \lambda_2 (1 - \zeta)\zeta^h (1+ O(B_3^{h})),
    \end{equation}
    where $\lambda_2$ and $B_3$ are as in Lemma~\ref{lem:sum-product-refined}.
\end{corollary}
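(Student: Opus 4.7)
The plan is to solve the Jacobian determinant equation \eqref{eq:Jacdet} for $\rho_h \Phi'(\eta_{h,0})$ and substitute in the two asymptotic formulas established in Lemma~\ref{lem:sum-product-refined}. Since the denominator in the resulting expression converges to the nonzero constant $\frac{1}{1-\zeta}$, division by it is harmless and both the main term and the relative error follow immediately.

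Concretely, I first rearrange \eqref{eq:Jacdet} to isolate the factor $1 - \rho_h\Phi'(\eta_{h,0})$. For sufficiently large $h$, Lemma~\ref{lem:sum-product-refined} guarantees that the sum $1 + \sum_{k=2}^{h}\prod_{j=k}^{h}(\rho_h\Phi'(\eta_{h,j}))$ is bounded away from zero (it tends to $\frac{1}{1-\zeta}>0$), so division is justified, and I obtain
\begin{equation*}
1 - \rho_h\Phi'(\eta_{h,0}) \;=\; -\,\frac{\displaystyle\prod_{j=1}^{h}\rho_h\Phi'(\eta_{h,j})}{\displaystyle 1 + \sum_{k=2}^{h}\prod_{j=k}^{h}\rho_h\Phi'(\eta_{h,j})}.
\end{equation*}

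Next, I substitute the two asymptotic formulas from Lemma~\ref{lem:sum-product-refined}:
\begin{equation*}
1 - \rho_h\Phi'(\eta_{h,0}) \;=\; -\,\frac{\lambda_2\zeta^h(1+O(B_3^h))}{\tfrac{1}{1-\zeta}+O(B_3^h)} \;=\; -\,\lambda_2(1-\zeta)\zeta^h\,\frac{1+O(B_3^h)}{1+O(B_3^h)}.
\end{equation*}
Since $1/(1+O(B_3^h))=1+O(B_3^h)$, the two error factors combine into a single $1+O(B_3^h)$, yielding
\begin{equation*}
\rho_h\Phi'(\eta_{h,0}) \;=\; 1 + \lambda_2(1-\zeta)\zeta^h\bigl(1+O(B_3^h)\bigr),
\end{equation*}
which is \eqref{eq:rho-Phi-prime-exp-fast-plus-error}. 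There is no real obstacle here: all the analytic work has already been done in Lemma~\ref{lem:sum-product-refined}, and this corollary is essentially a bookkeeping consequence of the Jacobian vanishing. The only point worth checking carefully is that the denominator stays bounded below (so that the $O$-term on the bottom can be absorbed), which follows from $\zeta = \rho\Phi'(0) < 1$ (a consequence of $\rho\Phi'(\tau)=1$ and strict monotonicity of $\Phi'$).
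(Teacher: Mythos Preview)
Your proof is correct and takes essentially the same approach as the paper: the paper's proof is a single sentence directing the reader to plug the two asymptotic formulas of Lemma~\ref{lem:sum-product-refined} into~\eqref{eq:Jacdet} and solve for $\rho_h\Phi'(\eta_{h,0})$, and your write-up is precisely the natural expansion of that instruction, including the check that the denominator is bounded away from zero.
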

\begin{proof}
    Taking the asymptotic formulas from the statement of Lemma~\ref{lem:sum-product-refined} and applying them to~\eqref{eq:Jacdet} we obtain the formula after solving for $\rho_h\Phi'(\eta_{h, 0})$.
\end{proof}

In the proof of Lemma~\ref{lem:exp-convergence} we used the bound $\eta_{h, h} = O(B_1^h)$ (obtained from Lemma~\ref{lem:exp-bound}). In order to refine the process, we need a more precise estimate. 

\begin{lemma}\label{lem:etahh}
    For sufficiently large $h$ and a fixed constant $B_4 < 1$, we have that 
    \begin{equation}\label{eq:etahh-formula}
      \eta_{h, h} = \lambda_1(1 - \zeta) \zeta^{h} (1 + O(B_4^h)),
    \end{equation}
    where $\lambda_1$ is as defined in Lemma~\ref{lem:eta-asymp}.
\end{lemma}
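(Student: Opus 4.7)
The proof plan is to Taylor-expand the recursion~\eqref{eq:syst2} around $y = 0$ to reduce it to an inhomogeneous linear recurrence that can be solved to recover $\eta_{h,h}$ explicitly. Using $\Phi(y) = 1 + \Phi'(0)y + O(y^2)$ and $\rho_h\Phi'(0) = \zeta + O(B_2^h)$, we write
\begin{equation*}
\eta_{h,k} = \zeta(\eta_{h,k-1} - \eta_{h,h}) + E_{h,k},
\end{equation*}
where the remainder $E_{h,k} = O(\eta_{h,k-1}^2) + O(\eta_{h,h}^2) + O(B_2^h(\eta_{h,k-1} + \eta_{h,h}))$ absorbs the quadratic Taylor error and the perturbation from $\rho_h \neq \rho$. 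Regarding $\eta_{h,h}$ as a constant forcing term, iteration from $k = k_0$ (a suitable cutoff) to $k = h$ and summation of the geometric series yields
\begin{equation*}
\eta_{h,h}\cdot\frac{1 - \zeta^{h-k_0+1}}{1-\zeta} = \zeta^{h-k_0}\eta_{h,k_0} + \sum_{j=k_0+1}^{h}\zeta^{h-j}E_{h,j},
\end{equation*}
which is a self-consistent equation for $\eta_{h,h}$ (since $E_{h,j}$ itself depends on $\eta_{h,h}$) whose main term is $(1-\zeta)\zeta^{h-k_0}\eta_{h,k_0}$.

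Substituting $\eta_{h,k_0} = \lambda_1\zeta^{k_0}(1 + O(B_1^{k_0})) + O(B_2^h)$ via Lemmata~\ref{lem:exp-convergence} and~\ref{lem:eta-asymp}, the main term becomes $\lambda_1(1-\zeta)\zeta^h(1 + O(B_1^{k_0}) + O(\zeta^{-k_0}B_2^h))$. Choosing the cutoff $k_0$ as a fraction of $h$ so that both $B_1^{k_0}$ and $\zeta^{-k_0}B_2^h$ are bounded by $B_4^h$ for some $B_4 \in (0,1)$, and bounding the error sum $\sum_{j=k_0+1}^h\zeta^{h-j}E_{h,j}$ by the same scheme, then yields the claimed expansion~\eqref{eq:etahh-formula}. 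Note that the value $\lambda_1(1-\zeta)$ arises naturally from the geometric-series factor $(1-\zeta)/(1-\zeta^{h-k_0+1}) \to 1-\zeta$ combined with the limit $\zeta^{-k_0}\eta_{h,k_0} \to \lambda_1$.

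The main obstacle is to obtain a bound on $\eta_{h,k}$ sharper than that provided by Lemma~\ref{lem:exp-bound}: specifically, $\eta_{h,k} = O(\zeta^k)$ uniformly in $k$, which is needed to control the quadratic contribution to $E_{h,k}$. This can be established by bootstrapping the linearized recurrence together with the facts $\eta_{h,k} \to \eta_k$ and $\eta_k = O(\zeta^k)$. A secondary delicate point is that the rate $B_2$ from Lemma~\ref{lem:exp-convergence} may a priori exceed $\zeta$, in which case the factor $\zeta^{-k_0}B_2^h$ is dangerous for small $k_0$; this is circumvented either by first refining the estimate to $\rho_h = \rho + O(\zeta^h)$ using Corollary~\ref{cor:Phi-prime} combined with the emerging bound $\eta_{h,h} = O(\zeta^h)$, or by choosing $k_0$ large enough (but still with $\zeta^{h-k_0}$ dominating $\zeta^h$ only by an exponentially small factor) so that the two error contributions balance.
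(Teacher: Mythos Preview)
Your approach is essentially the paper's: linearize the recursion~\eqref{eq:syst2} about $0$, iterate between a cutoff index $k_0 = \lfloor \alpha h\rfloor$ (with $\alpha \in (0,1)$) and $h$, and match against $\eta_{h,k_0} \approx \eta_{k_0} \approx \lambda_1\zeta^{k_0}$. The paper differs from your proposal in two organizational choices that together dissolve both of the obstacles you flag.

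First, the paper iterates \emph{backward}: from $\eta_{h,k} = \zeta(\eta_{h,k-1} - \eta_{h,h})(1+O(B^h))$ it writes $\eta_{h,k-1} = \eta_{h,h} + \zeta^{-1}\eta_{h,k}(1+O(B^h))$ and telescopes down to $\eta_{h,m}$. The error therefore stays multiplicative throughout, compounding only to $(1+O(B^h))^{h-m} = 1+O(hB^h)$, and no separate additive error sum $\sum_j\zeta^{h-j}E_{h,j}$ ever needs to be bounded. Second, the paper does not require the sharp estimate $\eta_{h,k} = O(\zeta^k)$ that you identify as the main difficulty: for $k \geq m$ the cruder bound $\eta_{h,k-1} = O(B_1^{m}) = O(B_1^{\alpha h})$, which is immediate from Lemma~\ref{lem:exp-bound}, is already uniform in $k$ and is all that is needed to make the multiplicative correction $(1+O(B_1^{\alpha h}))$. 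The $B_2$-versus-$\zeta$ issue is handled exactly by your second suggestion: choose $\alpha$ so that $\zeta^{\alpha} > B_2$, which guarantees that $\eta_m = \Theta(\zeta^{\alpha h})$ dominates the $O(B_2^h)$ correction in $\eta_{h,m} = \eta_m + O(B_2^h)$, and hence $\eta_{h,m} = \eta_m(1+O(B_4^h))$.
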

\begin{proof}
  Pick some $\alpha \in (0,1)$ in such a way that $\zeta^{\alpha} > B_2$, with
  $B_2$ as in Lemma~\ref{lem:exp-convergence}, and set
  $m = \lfloor \alpha h \rfloor$.  From Lemma~\ref{lem:eta-asymp}, we know that
  $\eta_m = \Theta( \zeta^{\alpha h})$. By Lemma~\ref{lem:exp-convergence},
  $\eta_{h,m} = \eta_m + O(B_2^h)$, so by our choice of $\alpha$ there is some
  $B_4 < 1$ such that $\eta_{h,m} = \eta_m (1 + O(B_4^h))$ for sufficiently
  large $h$.

Next, recall from~\eqref{eq:syst2} that
\begin{equation*}
\eta_{h,k} = \rho_h \big( \Phi(\eta_{h,k-1}) - \Phi(\eta_{h,h}) \big).
\end{equation*}
By the mean value theorem, there is some $\xi_{h,k} \in (\eta_{h,h},\eta_{h,k-1})$ such that
\begin{equation*}
\eta_{h,k} = \rho_h (\eta_{h,k-1} - \eta_{h,h}) \Phi'(\xi_{h,k}) = \rho_h (\eta_{h,k-1} - \eta_{h,h}) (\Phi'(0) + O(\eta_{h,k-1})).
\end{equation*}
Assume now that $k \geq m$, so that $\eta_{h,k-1} = O(B_1^{\alpha h})$ by Lemma~\ref{lem:exp-bound}. Moreover, $\rho_h = \rho + O(B_2^h)$ by Lemma~\ref{lem:exp-convergence}. So with $B = \max(B_2,B_1^{\alpha})$, it follows that
\begin{equation*}
\eta_{h,k} = \zeta (\eta_{h,k-1} - \eta_{h,h}) (1 + O(B^h)),
\end{equation*}
uniformly for all $k \geq m$. Rewrite this as
\begin{equation*}
\eta_{h,k-1} = \eta_{h,h} + \frac{\eta_{h,k}}{\zeta} (1 + O(B^h)).
\end{equation*}
Iterate this $h-m$ times to obtain
\begin{align*}
\eta_{h,m} &= \sum_{j=0}^{h-m} \frac{\eta_{h,h}}{\zeta^j} (1 + O(B^h))^j \\
&= \eta_{h,h} \zeta^{-(h-m)} \frac{1 - \zeta^{h-m+1}}{1-\zeta} (1 + O(h B^h)).
\end{align*}
Now recall that $\eta_{h,m} = \eta_m (1 + O(B_4^h))$, and that
$\eta_m = \lambda_1 \zeta^m (1 + O(B_1^{\alpha h}))$ by
Lemma~\ref{lem:eta-asymp}. Plugging all this in and solving for $\eta_{h,h}$,
we obtain~\eqref{eq:etahh-formula}, provided that $B_4$ was also chosen to be
greater than $B$ and $\zeta^{1-\alpha}$.
\end{proof}
  
Now we can make use of this asymptotic formula for $\eta_{h,h}$ in order to obtain a refined estimate for $\eta_{h, 0}$. 

\begin{proposition}\label{prop:rho-convergence}
  For a fixed constant $B_5 < 1$ and large enough $h$, we have that
  \begin{equation}\label{eq:eta-precise}
    \eta_{h, 0} = \tau + \frac{(1-\zeta)(\Phi(\tau) \lambda_2 - \Phi'(0) \lambda_1)}{\tau\Phi''(\tau)}\zeta^h + O((\zeta B_5)^h)
  \end{equation}
  and
  \begin{equation}\label{eq:rho-precise}
    \rho_h = \rho + \frac{\lambda_1(1-\zeta)}{\Phi(\tau)}\zeta^{h+1} + O((\zeta B_5)^h),
  \end{equation}
  where $\lambda_1$ and $\lambda_2$ are as in Lemma~\ref{lem:eta-asymp} and Lemma~\ref{lem:sum-product-refined} respectively.
\end{proposition}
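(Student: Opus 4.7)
The plan is to derive two approximate identities linking the deviations $\delta_\rho \coloneqq \rho_h - \rho$ and $\delta_0 \coloneqq \eta_{h,0} - \tau$, and then solve the resulting $2\times 2$ linear system. The first identity comes from combining~\eqref{eq:syst1} and~\eqref{eq:syst2} at $k=1$, which give $\eta_{h,0} = \rho_h + \rho_h \Phi(\eta_{h,0}) - \rho_h \Phi(\eta_{h,h})$. Taylor-expanding $\Phi$ around $\tau$ and around $0$, using the standard relations $\rho \Phi(\tau) = \tau$, $\rho \Phi'(\tau) = 1$, $\rho \Phi'(0) = \zeta$, and $\Phi(0) = 1$, yields
\begin{equation*}
\Phi(\tau)\,\delta_\rho = \zeta\,\eta_{h,h} - \tfrac{\rho \Phi''(\tau)}{2}\,\delta_0^2 + O(\delta_\rho \delta_0) + O(\delta_\rho\,\eta_{h,h}) + O(\delta_0^3) + O(\eta_{h,h}^2).
\end{equation*}
The second identity comes directly from Corollary~\ref{cor:Phi-prime}: Taylor-expanding $\Phi'$ around $\tau$ and using $\rho\Phi'(\tau) = 1$ gives
\begin{equation*}
\Phi'(\tau)\,\delta_\rho + \rho \Phi''(\tau)\,\delta_0 = \lambda_2 (1-\zeta)\,\zeta^h + O((\zeta B_3)^h) + O(\delta_\rho \delta_0) + O(\delta_0^2).
\end{equation*}

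Next I bootstrap the error estimates. Starting from $\delta_\rho, \delta_0 = O(B_2^h)$ as supplied by Lemma~\ref{lem:exp-convergence}, the second identity gives $\delta_0 = O(\zeta^h) + O(\delta_\rho) + O(B_2^{2h})$, and the first gives $\delta_\rho = O(\zeta^h) + O(\delta_0^2)$. Iterating a bounded number of times (which works because $B_2 < 1$ forces $B_2^{2^k}$ to eventually drop below $\zeta$) brings us to $\delta_\rho, \delta_0 = O(\zeta^h)$, whereupon every nonlinear remainder above is $O(\zeta^{2h})$. Substituting $\zeta\,\eta_{h,h} = \lambda_1 (1-\zeta)\,\zeta^{h+1}\bigl(1 + O(B_4^h)\bigr)$ from Lemma~\ref{lem:etahh} into the first identity and solving for $\delta_\rho$ yields~\eqref{eq:rho-precise}; plugging this back into the second identity and simplifying by means of $\Phi(\tau) = \tau/\rho$, $\Phi'(\tau) = 1/\rho$, and $\Phi'(0) = \zeta/\rho$ produces the stated leading coefficient $(1-\zeta)(\Phi(\tau)\lambda_2 - \Phi'(0)\lambda_1)/(\tau\Phi''(\tau))$ in~\eqref{eq:eta-precise}. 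The cumulative error is $O((\zeta B_5)^h)$ for $B_5 \coloneqq \max(B_3, B_4, \zeta) < 1$.

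The main technical obstacle is the bootstrap: because the initial rate $B_2$ from Lemma~\ref{lem:exp-convergence} need not be smaller than $\zeta$, the quadratic term $\delta_0^2$ in the first identity is not automatically controlled by $(\zeta B_5)^h$, and one must iterate the two identities until the nonlinear remainders fall below $\zeta^h$. Once both deviations are known to be of order $\zeta^h$, the remainder of the computation is direct algebra on the linear system, and the two target formulae drop out after using the fundamental identities relating $\rho$, $\tau$, and $\zeta$.
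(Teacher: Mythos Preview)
Your argument is correct and reaches the same conclusion, but by a genuinely different route from the paper.

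The paper eliminates $\rho_h$ at the outset: it multiplies the identity $\eta_{h,0} = \rho_h\bigl(\Phi(\eta_{h,0}) - \Phi'(0)\eta_{h,h} + O(\eta_{h,h}^2)\bigr)$ by the identity $\rho_h\Phi'(\eta_{h,0}) = 1 + \lambda_2(1-\zeta)\zeta^h(1+O(B_3^h))$ from Corollary~\ref{cor:Phi-prime} and divides by $\rho_h$. This produces a single equation $H(\eta_{h,0}) = (\Phi(\tau)\lambda_2 - \Phi'(0)\lambda_1)(1-\zeta)\zeta^h + O((\zeta B_5)^h)$ for the auxiliary function $H(x) = x\Phi'(x) - \Phi(x)$. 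Since $H$ has a simple zero at $\tau$ with $H'(\tau) = \tau\Phi''(\tau) > 0$, one reads off $\delta_0 = O(\zeta^h)$ immediately and then the precise constant in one step; $\rho_h$ is recovered afterwards from Corollary~\ref{cor:Phi-prime}. No bootstrap is needed.

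Your approach keeps the two equations separate and linearises each around $(\rho,\tau)$, yielding a $2\times 2$ system in $(\delta_\rho,\delta_0)$. The price is that in your first identity the first-order term in $\delta_0$ cancels (both sides contain $\delta_0$ via $\rho\Phi'(\tau)=1$), so the residual $\tfrac{\rho\Phi''(\tau)}{2}\delta_0^2$ is not a priori dominated by $\zeta^h$, forcing the finite bootstrap $a_{k+1}=\max(\zeta,a_k^2)$ that you describe. That iteration is sound and terminates in boundedly many steps, after which the linear algebra is straightforward and gives the stated constants. In short: the paper's multiplication trick with $H$ buys a cleaner one-shot inversion; your method is more hands-on but equally valid, and has the minor advantage of making the two deviations $\delta_\rho,\delta_0$ visible throughout.
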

\begin{proof}
From~\eqref{eq:syst1} and~\eqref{eq:syst2} with $k = 1$, we have
\begin{equation}\label{eq:etah0-etahh}
\eta_{h,0} = \rho_h \big( \Phi(\eta_{h,0}) - \Phi(\eta_{h, h}) + 1 \big).
\end{equation}
By means of Taylor expansion and Lemma~\ref{lem:etahh}, we get
\begin{equation*}
    \eta_{h, 0} = \rho_h \big( \Phi(\eta_{h, 0}) - \Phi'(0)\eta_{h, h} + O(\eta_{h, h}^2) \big).
\end{equation*}
We multiply this by~\eqref{eq:rho-Phi-prime-exp-fast-plus-error} and divide through by $\rho_h$ to obtain
\begin{equation}\label{eq:eta-h0-final-implicit}
    \eta_{h, 0} \Phi'(\eta_{h, 0}) = \big( \Phi(\eta_{h, 0}) - \Phi'(0)\eta_{h, h} + O(\eta_{h, h}^2) \big) \big(1 + \lambda_2 (1 - \zeta) \zeta^{h} (1+ O(B_3^{h})) \big)
\end{equation}
or, with $H(x) = x\Phi'(x) - \Phi(x)$,
\begin{equation*}
    H(\eta_{h,0}) = \big({-} \Phi'(0)\eta_{h, h} + O(\eta_{h, h}^2) \big) (1 + O(\zeta^{h})) 
+ \Phi(\eta_{h,0}) \lambda_2 (1 - \zeta) \zeta^h (1+ O(B_3^{h})).
\end{equation*}
We plug in the asymptotic formula for $\eta_{h,h}$ from Lemma~\ref{lem:etahh}
and also note that
$\Phi(\eta_{h,0}) = \Phi(\tau + O(B_2^h)) = \Phi(\tau) + O(B_2^h)$ by
Lemma~\ref{lem:exp-convergence}. This gives us
\begin{equation}\label{eq:etah0_final_implicit}
    H(\eta_{h,0}) = (\Phi(\tau) \lambda_2 - \Phi'(0) \lambda_1 )(1 - \zeta) \zeta^h + O((\zeta B_5)^h),
\end{equation}
where $B_5 = \max(\zeta,B_2,B_3,B_4)$.  Now note that the function $H$ is
increasing (on the positive real numbers within the radius of convergence of $\Phi$) with derivative $H'(x) = x \Phi''(x)$ and a unique zero at
$\tau$. So by inverting~\eqref{eq:etah0_final_implicit}, we finally end up with
\begin{equation*}
\eta_{h,0} = \tau + \frac{1}{H'(\tau)} (\Phi(\tau) \lambda_2 - \Phi'(0) \lambda_1 )(1 - \zeta) \zeta^h + O((\zeta B_5)^h),
\end{equation*}
completing the proof of the first formula.
Now we return to~\eqref{eq:rho-Phi-prime-exp-fast-plus-error}, which gives us
\begin{equation*}
\rho_h = \frac{1 + \lambda_2 (1 - \zeta)\zeta^h (1+ O(B_3^{h}))}{\Phi'(\eta_{h, 0})} \\
= \frac{1 + \lambda_2 (1 - \zeta)\zeta^h (1+ O(B_3^{h}))}{\Phi'(\tau) + \Phi''(\tau)(\eta_{h,0}-\tau) + O((\eta_{h,0}-\tau)^2)}.
\end{equation*}
Plugging in~\eqref{eq:eta-precise} and simplifying by means of the identities $\rho\Phi(\tau) = \tau$ and $\rho \Phi'(\tau) = 1$ now yields~\eqref{eq:rho-precise}.
\end{proof}

\subsection{Proof of Theorem~\ref{thm:main_outdeg1_allowed}}\label{sec:prowags-exp}

We are now finally ready to apply Theorem~\ref{thm:pro-wags-1} and
Theorem~\ref{thm:pro-wags-2}. The generating functions
$Y_h(z) := Y_{h,0}(z) = Y_{h,1}(z) + z$ were defined precisely in such a way
that $y_{h,n} = [z^n] Y_h(z)$ is the number of $n$-vertex trees for which the
maximum protection number is less than or equal to $h$. Thus the random
variable $X_n$ in Theorem~\ref{thm:pro-wags-1} becomes the maximum protection
number of a random $n$-vertex tree. Condition~(\ref{bullet:thm-1-2}) of
Theorem~\ref{thm:pro-wags-1} is satisfied in view of
Proposition~\ref{prop:analytic_properties_of_Yh}. Condition~(\ref{bullet:thm-1-1})
holds by Proposition~\ref{prop:rho-convergence} with $\zeta = \rho \Phi'(0)$
and
\begin{equation}\label{eq:expontential-kappa}
  \kappa = \frac{\lambda_1 (1-\zeta) \zeta}{\rho \Phi(\tau)} =
\frac{\lambda_1(1-\zeta)\zeta}{\tau},
\end{equation}  
where $\lambda_1$ is as defined in Lemma~\ref{lem:eta-asymp} and 
we recall the definition of $\zeta$
as $\rho\Phi'(0)$. This already proves the first part of
Theorem~\ref{thm:main_outdeg1_allowed}.

We can also apply Theorem~\ref{thm:pro-wags-2}: Note that the maximum
protection number of a tree with size $n$ is no greater than $n-1$, thus
$y_{h, n} = y_n$ for $h \geq n-1$, and an appropriate choice of constant for
Condition (\ref{bullet:thm-2-1}) in Theorem~\ref{thm:pro-wags-2} would be
$K = 1$. Conditions~(\ref{bullet:thm-2-2}) and~(\ref{bullet:thm-2-3}) are still
covered by Proposition~\ref{prop:analytic_properties_of_Yh}. Hence
Theorem~\ref{thm:pro-wags-2} applies, and the second part of
Theorem~\ref{thm:main_outdeg1_allowed} follows.

\section{The double-exponential case: \texorpdfstring{$w_1 = 0$}{w1 is 0}}\label{sec:case-double-exp}

\subsection{Asymptotics of the singularities}\label{sec:singularity-asymptotics-double-exp}

In Section~\ref{sec:singularity-asymptotics-exp}, it was crucial in most of our
asymptotic estimates that $w_1 = \Phi'(0) \neq 0$. In this section we assume
that $w_1 = \Phi'(0) = 0$ and define $r$ to be the smallest positive outdegree with
nonzero weight:
\begin{equation*}
r = \min \{i \in \mathbb{N}: i \geq 2 \text{ and } w_i \neq 0\} = \min\{i \in \mathbb{N}: i \geq 2 \text{ and } \Phi^{(i)}(0) \neq 0\}. 
\end{equation*}
Our goal will be to determine the asymptotic behaviour of $\rho_h$ in this
case, based again on the system of equations that is given
by~\eqref{eq:syst1},~\eqref{eq:syst2} and~\eqref{eq:Jacdet}. Once again, $B_i$'s
will always denote positive constants with $B_i < 1$ (different from those in
the previous section, but for simplicity we restart the count at $B_1$) that
depend on the specific family of simply generated trees, but nothing else.

No part of the proof of Lemma~\ref{lem:exp-bound} depends on $\Phi'(0) \neq 0$
and thus it also holds in the case which we are currently working in, so we
already have an exponential bound on $\eta_{h,k}$. However, this bound is loose
if $\Phi'(0) = 0$, and so we determine a tighter bound.

\begin{lemma}\label{lem:doubly-exp-bound}
    There exist positive constants $C$ and $B_1$ with $B_1 < 1$ such that $\eta_{h,k} \leq C B_1^{r^k}$ for all sufficiently large $h$ and all $k$ with $0 \leq k \leq h$.
\end{lemma}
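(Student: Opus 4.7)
The plan is to exploit the fact that, when $\Phi'(0) = 0$, the local expansion of $\Phi$ at the origin starts with the term $w_r z^r$ rather than a linear term. Specifically, since $w_j = 0$ for $1 \leq j \leq r-1$ and $w_0 = 1$, for real $z \geq 0$ small we have $\Phi(z) - 1 = w_r z^r + O(z^{r+1}) = w_r z^r (1 + O(z))$. Starting from the recursion~\eqref{eq:syst2} and using $\Phi(\eta_{h,h}) \geq \Phi(0) = 1$, we get
\begin{equation*}
  \eta_{h, k} \leq \rho_h\bigl(\Phi(\eta_{h, k-1}) - 1\bigr) = \rho_h w_r \eta_{h, k-1}^r\bigl(1 + O(\eta_{h, k-1})\bigr).
\end{equation*}
Thus, once $\eta_{h, k-1}$ is small enough, $\eta_{h, k} \leq M \eta_{h, k-1}^r$ for a fixed constant $M$ (for instance any $M > \rho w_r$ will do for sufficiently large $h$, since $\rho_h \to \rho$ by Lemma~\ref{lemma:singularity-location}).

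The main step is to iterate this inequality. By induction, if $\eta_{h, k_0} \leq \beta$ for some index $k_0$, then for all $k \geq k_0$
\begin{equation*}
  \eta_{h, k} \leq M^{(r^{k - k_0} - 1)/(r - 1)} \beta^{r^{k - k_0}} = M^{-1/(r-1)}\bigl(M^{1/(r-1)}\beta\bigr)^{r^{k - k_0}}.
\end{equation*}
The key is therefore to secure a starting point $k_0$ at which $\eta_{h, k_0}$ is below the threshold $\beta_0 < M^{-1/(r-1)}$ needed both for the local expansion of $\Phi$ to be valid and to make $M^{1/(r-1)}\beta < 1$, uniformly in $h$ (for $h$ large).

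Such a starting point is provided directly by the already established Lemma~\ref{lem:exp-bound}, which guarantees $\eta_{h, k} \leq C' B^k$ for constants $C' > 0$ and $B < 1$ independent of $h$. Choosing $k_0$ so that $C' B^{k_0} < \beta_0$ gives $\eta_{h, k_0} \leq \beta_0$ for all sufficiently large $h$, and the iteration above yields $\eta_{h, k} \leq M^{-1/(r-1)} \gamma^{r^{k - k_0}}$ for $k \geq k_0$, where $\gamma = M^{1/(r-1)} \beta_0 \in (0, 1)$. Writing $\gamma^{r^{k-k_0}} = \bigl(\gamma^{r^{-k_0}}\bigr)^{r^k}$ and setting $B_1 = \gamma^{r^{-k_0}} \in (0, 1)$, this is exactly the desired bound for $k \geq k_0$.

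Finally, the finitely many indices $k < k_0$ are handled by increasing the overall constant $C$ if necessary: for $k < k_0$ we simply use $\eta_{h, k} \leq M_0$ (again by Lemma~\ref{lem:exp-bound}) and note that $B_1^{r^k} \geq B_1^{r^{k_0}}$ for such $k$, so choosing $C \geq \max\bigl(M^{-1/(r-1)}, M_0 B_1^{-r^{k_0}}\bigr)$ yields $\eta_{h, k} \leq C B_1^{r^k}$ for all $0 \leq k \leq h$. The only delicate point is making sure all thresholds and constants ($\beta_0$, $k_0$, and the implied $O$-constant for the local expansion of $\Phi$) can be chosen independently of $h$, which follows from $\rho_h \to \rho$ and the uniform bound $\eta_{h, k} \leq C' B^k$ from Lemma~\ref{lem:exp-bound}.
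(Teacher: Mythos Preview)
Your proposal is correct and follows essentially the same approach as the paper: derive the step inequality $\eta_{h,k}\le M\eta_{h,k-1}^r$, iterate it to obtain $\eta_{h,k}\le M^{-1/(r-1)}\bigl(M^{1/(r-1)}\eta_{h,k_0}\bigr)^{r^{k-k_0}}$, and use the already-established exponential bound (Lemma~\ref{lem:exp-bound}) to secure a starting index $k_0$ where $M^{1/(r-1)}\eta_{h,k_0}<1$. The only cosmetic difference is that the paper obtains $\eta_{h,k}\le M\eta_{h,k-1}^r$ globally via Taylor's theorem with Lagrange remainder (bounding $\Phi^{(r)}(\xi)\le\Phi^{(r)}(\eta_{h,1})$), whereas you obtain it for small arguments via the local expansion $\Phi(z)-1=w_r z^r(1+O(z))$; since both arguments appeal to Lemma~\ref{lem:exp-bound} for a suitably small starting value anyway, this makes no material difference.
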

\begin{proof}
    From \eqref{eq:syst2}, we have that $\eta_{h, k} = \rho_h\Phi(\eta_{h, k-1}) - \rho_h\Phi(\eta_{h, h})$.
    Using the Taylor expansion about 0, this gives, for some $\xi_{h, k-1} \in (0, \eta_{h, k-1})$,
\begin{align*}
\eta_{h, k} &= \rho_h\Big(\Phi(0) + \frac{\Phi^{(r)}(\xi_{h, k-1})}{r!}\eta_{h, k-1}^r\Big) - \rho_h\Phi(\eta_{h, h}) \\
&\leq \rho_h \frac{\Phi^{(r)}(\xi_{h, k-1})}{r!}\eta_{h, k-1}^r \leq \rho_h \frac{\Phi^{(r)}(\eta_{h, 1})}{r!}\eta_{h, k-1}^r.
\end{align*}
There is a constant $M$ such that
$\rho_h \frac{\Phi^{(r)}(\eta_{h, 1})}{r!} \leq M$ for all sufficiently large
$h$, since we already know that $\rho_h$ and $\eta_{h,1}$ converge. So for
sufficiently large $h$, we have $\eta_{h, k} \leq M \eta_{h,k-1}^r$ for all
$k > 1$. Iterating this inequality yields
\begin{equation*}
\eta_{h,k} \leq M^{\frac{r^{k-\ell}-1}{r-1}} \eta_{h,\ell}^{r^{k-\ell}}
\end{equation*}
for $0\le \ell\le k$.
In view of the exponential bound on $\eta_{h,\ell}$ provided by Lemma~\ref{lem:exp-bound}, we can
choose $\ell$ so large that $M^{1/(r-1)} \eta_{h,\ell} \leq \frac12$ for all
sufficiently large $h$. This proves the desired bound for $k \geq \ell$ with
$B_1 = 2^{-r^{-\ell}}$ and a suitable choice of $C$ (for $k < \ell$, it is
implied by the exponential bound).
\end{proof}

Our next step is an analogue of Lemma~\ref{lem:sum-product-refined}.

\begin{lemma}\label{lem:doubly-exp-sum-product}
  For large enough $h$ and the same constant $B_1 < 1$ as in the previous lemma, we have
  \begin{equation*}
    1 + \sum_{k = 2}^{h}\prod_{j = k}^{h} (\rho_h \Phi'(\eta_{h, j})) = 1 + O(B_1^{r^h})
  \end{equation*}
and
    \begin{equation*}
      \prod_{j=1}^{h}\rho_h\Phi'(\eta_{h, j}) = O(B_1^{r^h}).
    \end{equation*}
\end{lemma}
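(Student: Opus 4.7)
The plan is to exploit the fact that $\Phi'(0) = 0$ while $\Phi^{(r)}(0) \neq 0$. A Taylor expansion near the origin yields $\Phi'(t) = r w_r t^{r-1} + O(t^r)$, so that, since $\eta_{h,j} \leq \eta_{h,1}$ lies in a bounded interval on which $\Phi'$ is analytic, I will use the uniform estimate $\Phi'(\eta_{h,j}) = O(\eta_{h,j}^{r-1})$. Combining this with Lemma~\ref{lem:doubly-exp-bound} and the boundedness of $\rho_h$ from Lemma~\ref{lemma:singularity-location} gives
\begin{equation*}
\rho_h \Phi'(\eta_{h,j}) = O(B_1^{(r-1) r^j})
\end{equation*}
uniformly in $j$, which will be the fundamental building block for both assertions.

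To prove the product estimate, I will simply sum the exponents:
\begin{equation*}
\prod_{j=1}^h \rho_h \Phi'(\eta_{h,j}) = O\bigl( B_1^{(r-1)(r + r^2 + \cdots + r^h)} \bigr) = O\bigl( B_1^{r^{h+1} - r} \bigr).
\end{equation*}
Since $r^{h+1} - r = r(r^h - 1) \geq r^h$ for $h \geq 1$ and $r \geq 2$, and the doubly-exponential decay in $h$ easily dominates any fixed multiplicative constant that might accumulate from the individual $O$-bounds, this yields $O(B_1^{r^h})$ for sufficiently large $h$.

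For the sum, I will set $T_k := \prod_{j=k}^h \rho_h \Phi'(\eta_{h,j}) = O\bigl(B_1^{r^{h+1} - r^k}\bigr)$. The key observation is that the ratio
\begin{equation*}
T_{k-1}/T_k = O\bigl(B_1^{(r-1) r^{k-1}}\bigr)
\end{equation*}
is itself doubly-exponentially small. A telescoping geometric-series comparison then bounds $\sum_{k=2}^h T_k$ by a constant multiple of the largest term $T_h = O(B_1^{(r-1) r^h})$. Since $(r-1) r^h \geq r^h$, this gives $\sum_{k=2}^h T_k = O(B_1^{r^h})$, as required.

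I do not anticipate any serious obstacle; the whole argument will be essentially a direct computation once one recognises that $\Phi'(0) = 0$ forces each factor $\rho_h \Phi'(\eta_{h,j})$ to be doubly-exponentially small via Taylor expansion, in stark contrast to the merely exponential decay of the corresponding factors in Lemma~\ref{lem:sum-product-refined}. The only mild subtlety is the geometric-series argument for the sum, which is needed to avoid a spurious factor of $h$ that would arise from the naive ``each term is at most the maximum, times the number of terms'' bound.
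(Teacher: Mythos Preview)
Your approach works and in fact produces sharper intermediate bounds than required, but the paper's route is more economical. Rather than invoking the doubly-exponential estimate on every factor, the paper uses it only for $j=h$: since $\Phi'(0)=0$, one has $\Phi'(\eta_{h,h})=O(\eta_{h,h})=O(B_1^{r^h})$ directly from Lemma~\ref{lem:doubly-exp-bound}. For all remaining factors the paper uses only the crude uniform bound $\rho_h\Phi'(\eta_{h,j})\le\rho_h\Phi'(\eta_{h,1})\le q<1$ (valid for large $h$ because $\rho_h\Phi'(\eta_{h,1})\to\rho\Phi'(\tau-\rho)<1$). This gives at once
\[
\sum_{k=2}^{h}\prod_{j=k}^{h}\rho_h\Phi'(\eta_{h,j})\le\rho_h\Phi'(\eta_{h,h})\sum_{k=2}^{h}q^{h-k}\le\frac{\rho_h\Phi'(\eta_{h,h})}{1-q}=O(B_1^{r^h}),
\]
and the full product is bounded by $q^{h-1}\rho_h\Phi'(\eta_{h,h})=O(B_1^{r^h})$ in the same way. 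Your method of summing the exponents $(r-1)r^j$ yields the stronger bound $O(B_1^{r^{h+1}-r})$ on the product, at the cost of having to absorb the $O$-constants that accumulate over $h$ factors. Note also that your geometric-series comparison for the sum tacitly assumes $T_{k-1}/T_k<1$ for \emph{all} $k\ge2$, which your $O$-estimate alone does not deliver when $k$ is small; the clean fix is precisely the bound $\rho_h\Phi'(\eta_{h,k-1})\le q<1$ that the paper uses from the outset.
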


\begin{proof}
  We already know that $\rho_h \Phi'(\eta_{h,1})$ converges to
  $\rho \Phi'(\tau - \rho) < 1$, so for sufficiently large $h$ and some
  $q < 1$, we have
  $\rho_h\Phi'(\eta_{h, j}) \leq \rho_h \Phi'(\eta_{h,1}) \leq q$ for all
  $j \geq 1$. It follows that
\begin{equation*}
\sum_{k = 2}^{h}\prod_{j = k}^{h} (\rho_h \Phi'(\eta_{h, j})) \leq \sum_{k=2}^h q^{h-k} \rho_h \Phi'(\eta_{h,h}) \leq \frac{1}{1-q} \rho_h \Phi'(\eta_{h,h})
\end{equation*}
and
    \begin{equation*}
      \prod_{j=1}^{h}\rho_h\Phi'(\eta_{h, j}) \leq q^{h-1} \rho_h \Phi'(\eta_{h,h}).
    \end{equation*}
Now both statements  follow from the fact that $\Phi'(\eta_{h,h}) = \Phi'(0) + O(\eta_{h,h}) = O(\eta_{h,h})$ and the previous lemma.
\end{proof}

Taking the results from Lemma~\ref{lem:doubly-exp-sum-product} and applying them to \eqref{eq:Jacdet}, we find that 
\begin{equation}\label{eq:doubly-exp-first-rho}
  \rho_h\Phi'(\eta_{h, 0}) = 1 + O\big(B_1^{r^{h}}\big). 
\end{equation}
Additionally note that using Lemma~\ref{lem:doubly-exp-bound} and Taylor expansion, we have that
\begin{equation*}
  \Phi(\eta_{h, h}) = 1 + O(B_1^{r^{h}}).
\end{equation*}
Now recall that~\eqref{eq:syst1} and~\eqref{eq:syst2} yield (see~\eqref{eq:etah0-etahh})
\begin{equation}\label{eq:eta-h0-equation}
\eta_{h,0} = \rho_h \big( \Phi(\eta_{h,0}) - \Phi(\eta_{h, h}) + 1 \big),
\end{equation}
which now becomes
\begin{equation}\label{eq:doubly-exp-first-eta}
  \eta_{h, 0} = \rho_h\Phi(\eta_{h, 0}) + O\big(B_1^{r^{h}}\big).
\end{equation}

Taking advantage of the expressions in~\eqref{eq:doubly-exp-first-rho} and~\eqref{eq:doubly-exp-first-eta}, we can now prove doubly exponential convergence of 
$\rho_h$ and $\eta_{h, 0}$ (using the approach of Proposition~\ref{prop:rho-convergence}).

\begin{lemma}\label{lem:doubly-exp-convergence}
  For large enough $h$, it holds that 
  \begin{equation*}
    \rho_h = \rho + O\big(B_1^{r^{h}}\big) \qquad \text{and} \qquad \eta_{h, 0} = \tau + O\big(B_1^{r^{h}}\big).
  \end{equation*} 
and thus also $\eta_{h,1} = \eta_{h,0} - \rho_h = \eta_1 + O(B_1^{r^h})$.
\end{lemma}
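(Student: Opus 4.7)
The plan is to view the pair of relations~\eqref{eq:doubly-exp-first-rho} and~\eqref{eq:doubly-exp-first-eta} as a perturbation of the defining system
\begin{equation*}
    G_1(x,y) := x\Phi'(y) - 1 = 0, \qquad G_2(x,y) := y - x\Phi(y) = 0
\end{equation*}
for $(\rho,\tau)$, with perturbations $\varepsilon_{1,h},\varepsilon_{2,h}$ on the right-hand sides that are both $O(B_1^{r^h})$. Since $\eta_{h,0} = \eta_{h,1} + \rho_h$ by~\eqref{eq:syst1}, Lemma~\ref{lemma:singularity-location} already gives $\rho_h \to \rho$ and $\eta_{h,0} \to \eta_1 + \rho = \tau$, so $(\rho_h,\eta_{h,0})$ lies in an arbitrarily small neighbourhood of $(\rho,\tau)$ for all sufficiently large $h$.

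Next I would compute the Jacobian of $(G_1,G_2)$ at $(\rho,\tau)$. Using $\rho\Phi'(\tau)=1$, this is
\begin{equation*}
    J = \begin{pmatrix} \Phi'(\tau) & \rho\Phi''(\tau) \\ -\Phi(\tau) & 1-\rho\Phi'(\tau) \end{pmatrix} = \begin{pmatrix} \Phi'(\tau) & \rho\Phi''(\tau) \\ -\Phi(\tau) & 0 \end{pmatrix},
\end{equation*}
with determinant $\rho\Phi(\tau)\Phi''(\tau) = \tau\Phi''(\tau)$. This is nonzero because $\tau>0$ and $\Phi''(\tau)>0$ (by the standing assumption that $w_j > 0$ for some $j \geq 2$, which in the present setting $w_1 = 0$ even gives $\Phi''(0) > 0$ as soon as $r = 2$, and in general positivity follows from $\Phi$ having nonnegative coefficients with at least one $w_j > 0$ for $j \geq 2$).

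By the analytic implicit function theorem, there exist analytic functions $X(\varepsilon_1,\varepsilon_2)$ and $Y(\varepsilon_1,\varepsilon_2)$ defined in a neighbourhood of $(0,0)$ with $X(0,0)=\rho$, $Y(0,0)=\tau$, such that $(x,y)=(X(\varepsilon_1,\varepsilon_2),Y(\varepsilon_1,\varepsilon_2))$ is the unique solution of $G_1(x,y)=\varepsilon_1$, $G_2(x,y)=\varepsilon_2$ in a fixed neighbourhood of $(\rho,\tau)$. For all sufficiently large $h$, the pair $(\rho_h,\eta_{h,0})$ lies in this neighbourhood and solves the system with $(\varepsilon_1,\varepsilon_2)=(\varepsilon_{1,h},\varepsilon_{2,h})=O(B_1^{r^h})$. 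Since $X$ and $Y$ are analytic with $X(0,0)=\rho$, $Y(0,0)=\tau$, they are Lipschitz near the origin, and we obtain
\begin{equation*}
    \rho_h - \rho = O(|\varepsilon_{1,h}|+|\varepsilon_{2,h}|) = O(B_1^{r^h}), \qquad \eta_{h,0} - \tau = O(B_1^{r^h}).
\end{equation*}
The final assertion $\eta_{h,1} = \eta_1 + O(B_1^{r^h})$ follows immediately from $\eta_{h,1} = \eta_{h,0} - \rho_h$ and $\eta_1 = \tau - \rho$.

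The only place requiring care is the nonvanishing of the Jacobian determinant, which crucially uses both defining equations $\rho\Phi'(\tau)=1$ and $\tau=\rho\Phi(\tau)$ to obtain the clean form $\tau\Phi''(\tau)$; after this, the rest is a textbook application of the implicit function theorem to a small analytic perturbation, and the doubly exponential rate on the right-hand side is transported verbatim to the solution.
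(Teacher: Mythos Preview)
Your argument is correct and complete. The paper takes a slightly different route: rather than treating the two relations as a two-variable system and invoking the implicit function theorem, it eliminates $\rho_h$ by multiplying~\eqref{eq:doubly-exp-first-rho} and~\eqref{eq:doubly-exp-first-eta} and dividing by $\rho_h$ to obtain the single equation
\[
\eta_{h,0}\Phi'(\eta_{h,0}) = \Phi(\eta_{h,0}) + O\big(B_1^{r^h}\big),
\]
i.e.\ $H(\eta_{h,0}) = O(B_1^{r^h})$ for $H(x)=x\Phi'(x)-\Phi(x)$. Since $H$ is strictly increasing with $H(\tau)=0$ and $H'(\tau)=\tau\Phi''(\tau)>0$, this yields $\eta_{h,0}=\tau+O(B_1^{r^h})$, and then~\eqref{eq:doubly-exp-first-rho} gives the estimate for $\rho_h$. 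The paper's one-variable reduction is marginally more elementary and reuses the same function $H$ that appears in Proposition~\ref{prop:rho-convergence}; your two-variable implicit function argument is more systematic and would generalise more readily to larger systems. Note that your Jacobian determinant $\tau\Phi''(\tau)$ is exactly the derivative $H'(\tau)$ that drives the paper's inversion, so the two approaches are in fact equivalent at the linearised level.
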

\begin{proof}
  Multiplying~\eqref{eq:doubly-exp-first-rho} and~\eqref{eq:doubly-exp-first-eta} and dividing by $\rho_h$ yields
  \begin{equation*}
    \eta_{h, 0}\Phi'(\eta_{h, 0}) = \Phi(\eta_{h, 0}) + O\big(B_1^{r^{h}}\big).
  \end{equation*}
  As in the proof of Proposition~\ref{prop:rho-convergence}, we observe that the
  function $H(x) = x \Phi'(x) - \Phi(x)$ is increasing (on the positive real numbers within the radius of convergence of $\Phi$) with derivative
  $H'(x) = x \Phi''(x)$ and a unique zero at $\tau$. So it follows from this
  equation that $\eta_{h, 0} = \tau + O\big(B_1^{r^{h}}\big)$.  Using this
  estimate for $\eta_{h, 0}$ in~\eqref{eq:doubly-exp-first-rho} it follows that
  $\rho_h = \rho + O\big(B_1^{r^{h}}\big)$.
\end{proof}

As in the previous section, we will approximate $\eta_{h,k}$ by $\eta_k$,
defined recursively by $\eta_0 = \tau$ and
$\eta_k = \rho (\Phi(\eta_{k-1}) - 1)$. As it turns out, this approximation is
even more precise in the current case.

\begin{lemma}\label{lem:etah-k-to-eta-k}
For a fixed constant $B_2 < 1$ and sufficiently large $h$, we have that 
  \begin{equation*}
    \eta_{h, k} = \eta_{k}(1 + O(B_2^{r^h})),
  \end{equation*}
uniformly for all $0 \leq k \leq h$.
\end{lemma}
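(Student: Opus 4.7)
The plan is to proceed by induction on $k$, controlling the relative error $\delta_{h,k} := \eta_{h,k}/\eta_k - 1$. The base case $k=0$ is immediate from Lemma~\ref{lem:doubly-exp-convergence}, which gives $\delta_{h,0} = O(B_1^{r^h})$. For the inductive step, I would subtract $\eta_k = \rho(\Phi(\eta_{k-1})-1)$ from $\eta_{h,k} = \rho_h\Phi(\eta_{h,k-1}) - \rho_h\Phi(\eta_{h,h})$, obtaining three contributions: a term $(\rho_h-\rho)(\Phi(\eta_{h,k-1}) - \Phi(\eta_{h,h})) = O(B_1^{r^h}\eta_k)$ using Lemma~\ref{lem:doubly-exp-convergence} together with the fact that $\eta_{k-1}^r = \Theta(\eta_k)$; a term $\rho(\Phi(\eta_{h,k-1})-\Phi(\eta_{k-1}))$ which by the mean value theorem equals $L_k\,\eta_k\,\delta_{h,k-1}$ with multiplier $L_k = \Phi'(\xi)\eta_{k-1}/(\Phi(\eta_{k-1})-1)$ uniformly bounded by some constant $L$ (the map $y\mapsto y\Phi'(y)/(\Phi(y)-1)$ extends continuously to $[0,\tau]$ with value $r$ at the origin); and the remainder $-\rho(\Phi(\eta_{h,h})-1) = O(\eta_{h,h}^r)$. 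Dividing by $\eta_k$ yields the recursive estimate
\begin{equation*}
  |\delta_{h,k}| \le L\,|\delta_{h,k-1}| + O(B_1^{r^h}) + O(\eta_{h,h}^r/\eta_k).
\end{equation*}

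Iterating this and observing that the partial sum $\sum_{j=1}^k L^{k-j}\,\eta_{h,h}^r/\eta_j$ is dominated by its final summand (since $1/\eta_j$ grows doubly exponentially in $j$), one obtains $|\delta_{h,k}| = O(L^k B_1^{r^h}) + O(L^k\,\eta_{h,h}^r/\eta_k)$. For $k\le h$ the factor $L^k$ is only singly exponential in $h$, so $L^h B_1^{r^h} = O(B_2^{r^h})$ for any $B_2 \in (B_1,1)$ once $h$ is sufficiently large, which handles the first summand.

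The main obstacle is to show that $\eta_{h,h}^r/\eta_h$ likewise decays doubly exponentially in $h$ (the case $k=h$ being the worst, as $\eta_k$ is nonincreasing in $k$). Lemma~\ref{lem:doubly-exp-bound} supplies the upper bound $\eta_{h,h} \le CB_1^{r^h}$, and a matching lower bound $\eta_h \ge C'\tilde\nu^{r^h}$ for some $\tilde\nu \in (0,1)$ follows by iterating $\eta_k \ge \rho w_r \eta_{k-1}^r$ from an index $\ell$ large enough that $(\rho w_r)^{1/(r-1)}\eta_\ell < 1$, giving $\eta_{h,h}^r/\eta_h \le C''(B_1^r/\tilde\nu)^{r^h}$. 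The delicate technical point is that both $B_1$ and $\tilde\nu$ arise from iterations of the same structural type $x_k \lesssim c\,x_{k-1}^r$ and therefore reflect, in the limit, the true doubly exponential rate of $\eta_h$; by choosing the starting indices in each iteration sufficiently large one can drive both parameters close to this common rate, while the additional factor of $r$ in the numerator's exponent opens a gap that ensures $B_1^r/\tilde\nu < 1$. Combining this with the bound on the first summand then yields $|\delta_{h,k}| = O(B_2^{r^h})$ uniformly in $0 \le k \le h$ for a suitable $B_2 \in (0,1)$.
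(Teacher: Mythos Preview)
Your strategy—tracking a relative error via a linearised recursion and iterating—is close in spirit to the paper's, but two points deserve attention.

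First, your justification that $L_k = \Phi'(\xi)\,\eta_{k-1}/(\Phi(\eta_{k-1})-1)$ is uniformly bounded has a gap: the continuity of $y\mapsto y\Phi'(y)/(\Phi(y)-1)$ on $[0,\tau]$ only handles the case $\xi=\eta_{k-1}$. For a mean-value point $\xi$ between $\eta_{h,k-1}$ and $\eta_{k-1}$ one has $\Phi'(\xi)=\Theta(\xi^{r-1})$ and $\Phi(\eta_{k-1})-1=\Theta(\eta_{k-1}^r)$, hence $L_k=\Theta\bigl((\xi/\eta_{k-1})^{r-1}\bigr)$, which is bounded only if $\eta_{h,k-1}/\eta_{k-1}$ is already bounded—exactly what you are proving. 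This is repairable by a bootstrap (carry the hypothesis $|\delta_{h,k-1}|\le 1$ through the induction), but must be said. The paper avoids the issue by passing to logarithms: the auxiliary function $\Psi_1(u)=\log(\Phi(e^u)-1)$ has bounded derivative on all of $(-\infty,\log\tau]$ (its limit at $-\infty$ is $r$), so one obtains $\bigl|\log(\eta_{h,k}/\eta_k)\bigr|\le K\bigl|\log(\eta_{h,k-1}/\eta_{k-1})\bigr|+O(B_1^{r^h})$ without any a~priori control on the ratio.

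Second, concerning the term you flag as the main obstacle, $\eta_{h,h}^r/\eta_k$: the paper disposes of it before dividing, via the monotonicity $\eta_{h,k}\ge\eta_{h,h}$. This turns the additive $O(\eta_{h,h}^r)$ into a multiplicative factor $\bigl(1+O(\eta_{h,h}^{r-1})\bigr)=\bigl(1+O(B_1^{(r-1)r^h})\bigr)$ on $\eta_{h,k}$ itself, so no lower bound on $\eta_k$ is ever needed. Your alternative—driving both $B_1$ and $\tilde\nu$ close to the common doubly-exponential rate so that $B_1^r/\tilde\nu<1$—can be made rigorous (the relevant sequences $(M^{1/(r-1)}\eta_\ell)^{r^{-\ell}}$ and $((\rho w_r)^{1/(r-1)}\eta_\ell)^{r^{-\ell}}$ are monotone with a common limit), but it is substantially more work than the one-line absorption the paper uses.
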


\begin{proof}
Recall that, by~\eqref{eq:syst2}, $\eta_{h, k} = \rho_h\Phi(\eta_{h, k-1}) - \rho_h\Phi(\eta_{h, h})$. By Taylor expansion, we find that
\begin{equation*}
\eta_{h, k} = \rho_h\Phi(\eta_{h, k-1}) - \rho_h + O(\eta_{h,h}^r).
\end{equation*}
Since $\eta_{h,k} \geq \eta_{h,h}$, we have
$\eta_{h,k} - O(\eta_{h,h}^r) = \eta_{h,k} (1 - O(\eta_{h,h}^{r-1}))$. Now we
use the estimates $\eta_{h,h} = O(B_1^{r^h})$ from
Lemma~\ref{lem:doubly-exp-bound} and $\rho_h = \rho + O(B_1^{r^h})$ from
Lemma~\ref{lem:doubly-exp-convergence} to obtain
\begin{equation*}
\eta_{h, k} = \rho(\Phi(\eta_{h, k-1}) - 1) \big(1+O(B_1^{r^h})\big).
\end{equation*}
We compare this to
\begin{equation*}
\eta_k = \rho(\Phi(\eta_{k-1}) - 1).
\end{equation*}
Taking the logarithm in both these equations and subtracting yields
\begin{equation}\label{eq:etahk-versus-etak}
\log \frac{\eta_{h,k}}{\eta_k} = \log (\Phi(\eta_{h, k-1}) - 1) - \log (\Phi(\eta_{k-1}) - 1) + O(B_1^{r^h}).
\end{equation}

For large enough $h$, we can assume that $\eta_{h,1} \leq \tau$ and thus $\eta_{h,k} \leq \tau$ for all $k \geq 1$. The auxiliary function
\begin{equation*}
\Psi_1(u) = \log \big( \Phi(e^u) - 1 \big)
\end{equation*}
is continuously differentiable on $(-\infty,\log(\tau)]$. Since
$\lim_{u \to -\infty} \Psi_1'(u) = r$, as one easily verifies,
$\abs{\Psi_1'(u)}$ must be bounded by some constant $K$ for all $u$ in this
interval, thus $\abs{\Psi_1(u+v) - \Psi_1(u)} \leq K\abs{v}$ whenever
$u,u+v \leq \log(\tau)$. We apply this with $u+v = \log \eta_{h,k-1}$ and
$u = \log \eta_{k-1}$ to obtain
\begin{equation*}
\Big| \log (\Phi(\eta_{h, k-1}) - 1) - \log (\Phi(\eta_{k-1}) - 1) \Big| \leq K \Big| \log \frac{\eta_{h,k-1}}{\eta_{k-1}} \Big|.
\end{equation*}
Plugging this into~\eqref{eq:etahk-versus-etak} yields
\begin{equation}\label{eq:double-exponential-iteration}
\Big| \log \frac{\eta_{h,k}}{\eta_k} \Big| \leq K \Big| \log \frac{\eta_{h,k-1}}{\eta_{k-1}} \Big| + O(B_1^{r^h}).
\end{equation}
We already know that $\big| \log \frac{\eta_{h,0}}{\eta_0} \big| = O(B_1^{r^h})$ and
$\big| \log \frac{\eta_{h,1}}{\eta_1} \big| = O(B_1^{r^h})$ in view of Lemma~\ref{lem:doubly-exp-convergence}. Iterating~\eqref{eq:double-exponential-iteration} gives us
\begin{equation*}
\Big| \log \frac{\eta_{h,k}}{\eta_k} \Big| = O\big((1+K+K^2+\cdots+K^k) B_1^{r^h}),
\end{equation*}
which implies the statement for any $B_2 > B_1$.
\end{proof}

The next lemma parallels Lemma~\ref{lem:eta-asymp}.

\begin{lemma}\label{lem:eta-asymp-double-exp}
There exist positive constants $\lambda_1$ and $\mu < 1$ such that
\begin{equation*}
\eta_k = \lambda_1 \mu^{r^k}\big(1 + O(B_1^{r^k})\big),
\end{equation*}
with the same constant $B_1$ as in Lemma~\ref{lem:doubly-exp-bound}.
\end{lemma}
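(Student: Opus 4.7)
The plan is to emulate the proof of Lemma~\ref{lem:eta-asymp} but on a logarithmic scale so as to capture the doubly exponential behaviour. Since $w_1 = \cdots = w_{r-1} = 0$ and $w_r > 0$, Taylor expansion of $\Phi$ at $0$ gives
\begin{equation*}
    \eta_k = \rho(\Phi(\eta_{k-1}) - 1) = c\,\eta_{k-1}^r \bigl(1 + O(\eta_{k-1})\bigr),
\end{equation*}
where $c := \rho w_r > 0$; combined with $\eta_{k-1} = O(B_1^{r^{k-1}})$ from Lemma~\ref{lem:doubly-exp-bound}, this is the driving recurrence that I would analyse.

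Comparing with the ansatz $\eta_k \approx \lambda_1 \mu^{r^k}$ and matching leading terms forces $\lambda_1 = c\lambda_1^r$, which uniquely determines $\lambda_1 := c^{-1/(r-1)} = (\rho w_r)^{-1/(r-1)} > 0$. To pin down $\mu$, I would substitute $u_k := \eta_k/\lambda_1$; using $c\lambda_1^{r-1} = 1$ the recurrence simplifies to $u_k = u_{k-1}^r\bigl(1 + O(u_{k-1})\bigr)$, and taking logarithms turns this into the near-linear relation $\log u_k = r\log u_{k-1} + O(u_{k-1})$. Dividing by $r^k$ gives the telescoping differences
\begin{equation*}
    \frac{\log u_k}{r^k} - \frac{\log u_{k-1}}{r^{k-1}} = O\bigl(r^{-k}\,u_{k-1}\bigr),
\end{equation*}
whose series is absolutely convergent because $u_{k-1}$ decays doubly exponentially. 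Hence $\log \mu := \lim_{k\to\infty} r^{-k}\log u_k$ exists, and comparing the $k$-th partial sum with its limit via the tail (dominated by the single term $O(u_k/r^{k+1})$) yields $r^{-k}\log u_k = \log \mu + O(u_k)$. Re-exponentiating and using that $r^k u_k \to 0$ then gives $\eta_k = \lambda_1 \mu^{r^k}\bigl(1 + O(r^k B_1^{r^k})\bigr) = \lambda_1 \mu^{r^k}\bigl(1 + O(B_1^{r^k})\bigr)$, after absorbing the polynomial factor into a marginally larger base.

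The inequality $\mu < 1$ is essentially immediate: taking the logarithm of $\eta_k \le C B_1^{r^k}$ from Lemma~\ref{lem:doubly-exp-bound}, dividing by $r^k$ and passing to the limit gives $\log \mu \le \log B_1 < 0$. I expect the only genuine technical nuisance to be the bookkeeping in the error term, namely squeezing the stray polynomial factor $r^k$ that appears when re-exponentiating back into a constant of the form $B_1^{r^k}$; this is done either by allowing a slight widening of $B_1$ or by choosing $B_1$ in Lemma~\ref{lem:doubly-exp-bound} with a touch of slack to begin with. Apart from this minor point, the whole argument is a direct analogue of the proof of Lemma~\ref{lem:eta-asymp}, with the single multiplicative scaling $\eta_k \mapsto \zeta^{-k}\eta_k$ replaced by the logarithmic rescaling $\log u_k \mapsto r^{-k}\log u_k$.
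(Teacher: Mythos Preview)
Your proposal is correct and follows essentially the same route as the paper: set $\lambda_1=(\rho w_r)^{-1/(r-1)}$, pass to $u_k=\eta_k/\lambda_1$, take logarithms to get $\log u_k=r\log u_{k-1}+\theta_{k-1}$ with $\theta_{k-1}=O(\eta_{k-1})$, and iterate/telescope to define $\log\mu$ and control the tail.

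One small remark on the ``technical nuisance'' you anticipate: it is not actually there. You yourself observe that the tail is dominated by the single term $O(u_k/r^{k+1})$, but then you discard the $r^{-(k+1)}$ and only record $r^{-k}\log u_k=\log\mu+O(u_k)$; multiplying back by $r^k$ is what produces your spurious $r^k$ factor. If instead you keep the sharper tail bound, you obtain $\log u_k = r^k\log\mu + O(u_k)$ directly (this is exactly what the paper does, writing the remainder as $\sum_{j\ge k} r^{k-1-j}\theta_j=O(B_1^{r^k})$), and then exponentiating gives $\eta_k=\lambda_1\mu^{r^k}(1+O(B_1^{r^k}))$ with the \emph{same} constant $B_1$ from Lemma~\ref{lem:doubly-exp-bound}, so no widening is needed.
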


\begin{proof}
Note that Lemma~\ref{lem:doubly-exp-bound} trivially implies that $\eta_k=O(B_1^{r^k})$.
From the recursion
\begin{equation*}
\eta_k = \rho (\Phi(\eta_{k-1}) - 1),
\end{equation*}
we obtain, by the properties of $\Phi$,
\begin{equation*}
\eta_k = \frac{\rho \Phi^{(r)}(0)\eta_{k-1}^r}{r!} (1 + O(\eta_{k-1})).
\end{equation*}
Set 
\begin{equation}\label{eq:lambda1}
\lambda_1 = \Bigl( \frac{\rho \Phi^{(r)}(0)}{r!}\Bigr)^{-1/(r-1)} = (\rho w_r)^{-1/(r-1)}
\end{equation} 
and divide both sides by $\lambda_1$ to obtain
\begin{equation*}
\frac{\eta_k}{\lambda_1} = \Big(\frac{\eta_{k-1}}{\lambda_1} \Big)^r (1 + O(\eta_{k-1})).
\end{equation*}
Let us write $e^{\theta_{k-1}}$ for the final factor, where $\theta_{k-1} = O(\eta_{k-1})$. Taking the logarithm yields
\begin{equation*}
\log \frac{\eta_k}{\lambda_1} = r \log \frac{\eta_{k-1}}{\lambda_1} + \theta_{k-1}.
\end{equation*}
We iterate this recursion $k$ times to obtain
\begin{align*}
\log \frac{\eta_k}{\lambda_1} &= r^k \log \frac{\eta_0}{\lambda_1} + \sum_{j=0}^{k-1} r^{k-1-j} \theta_j \\
&= r^k \Big( \log \frac{\eta_0}{\lambda_1} + \sum_{j=0}^{\infty} r^{-1-j} \theta_j \Big) - \sum_{j=k}^{\infty} r^{k-1-j} \theta_j.
\end{align*}
The infinite series converge in view of the estimate
$\theta_j = O(\eta_j) = O(B_1^{r^j})$ that we get from
Lemma~\ref{lem:doubly-exp-bound}. Moreover, we have
$\sum_{j=k}^{\infty} r^{k-1-j} \theta_j = O(B_1^{r^k})$ by the same bound. The
result follows upon taking the exponential on both sides and multiplying by
$\lambda_1$, setting
\begin{equation}\label{eq:mu}
\mu := \exp \Big( \log \frac{\eta_0}{\lambda_1} + \sum_{j=0}^{\infty} r^{-1-j} \theta_j \Big) = \frac{\eta_0}{\lambda_1} \prod_{j=0}^{\infty} e^{\theta_j/r^{j+1}}.
\end{equation}
Note that $\mu<1$ because we already know that $\eta_k=O(B_1^{r^k})$.
\end{proof}

In order to further analyse
the behaviour of the product $\prod_{j = 1}^h (\rho_h \Phi'(\eta_{h, j}))$
in~\eqref{eq:Jacdet}, we need one more short lemma.

\begin{lemma}\label{lem:Phi'-convergence}
For sufficiently large $h$, we have that 
  \begin{equation*}
    \Phi'(\eta_{h, k}) = \Phi'(\eta_k)\big(1 + O(B_2^{r^h})\big),
  \end{equation*}
uniformly for all $1 \leq k \leq h$, with the same constant $B_2$ as in Lemma~\ref{lem:etah-k-to-eta-k}.
\end{lemma}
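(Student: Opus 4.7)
The plan is to exploit that $\Phi'(0) = w_1 = 0$ while $w_r \neq 0$, so $\Phi'$ vanishes at the origin to order exactly $r-1$. Concretely, I would factor
\[
\Phi'(x) = x^{r-1} G(x), \qquad G(x) := \sum_{j \geq r} j w_j x^{j-r},
\]
where $G$ is analytic on the disc of convergence of $\Phi$ with $G(0) = r w_r > 0$. Consequently
\[
\frac{\Phi'(\eta_{h,k})}{\Phi'(\eta_k)} = \biggl(\frac{\eta_{h,k}}{\eta_k}\biggr)^{\!r-1}\cdot\frac{G(\eta_{h,k})}{G(\eta_k)},
\]
and I would estimate the two factors separately.

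For the first factor, Lemma~\ref{lem:etah-k-to-eta-k} gives $\eta_{h,k}/\eta_k = 1 + O(B_2^{r^h})$ uniformly for $1 \leq k \leq h$, and raising to the $(r-1)$-th power preserves this estimate. For the second factor I would use two facts about $G$ on the real interval $[0,\eta_1]$: it is Lipschitz (as it is analytic on a neighbourhood of this compact interval), and it is bounded below by $r w_r > 0$ (since all summands in the defining series are nonnegative for $x \geq 0$, and the $j=r$ term contributes $r w_r$ independently of $x$). Together with $|\eta_{h,k}-\eta_k| = \eta_k\, O(B_2^{r^h}) = O(B_2^{r^h})$ uniformly in $k \geq 1$, these imply
\[
\frac{G(\eta_{h,k})}{G(\eta_k)} = 1 + O(|\eta_{h,k}-\eta_k|) = 1 + O(B_2^{r^h}),
\]
and multiplying the two factor estimates yields the lemma.

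The step that requires the most care is the choice of factorisation. Naively bounding $\Phi'(\eta_{h,k}) - \Phi'(\eta_k)$ by a Lipschitz constant times $|\eta_{h,k}-\eta_k|$ does not yield a \emph{relative} error, because $\Phi'(\eta_k)$ itself is of order $\eta_k^{r-1}$ and vanishes as $k \to \infty$. Separating out the vanishing factor $x^{r-1}$ at the outset reduces the problem to a relative estimate on the nonvanishing function $G$, to which Lemma~\ref{lem:etah-k-to-eta-k} applies directly. The estimate then holds uniformly both in the regime of small $k$ (where $\eta_k$ is of order one and the Lipschitz bound on $G$ suffices) and of large $k$ (where $\eta_k \to 0$ and $G(\eta_{h,k}), G(\eta_k) \to G(0) > 0$, making the relative error even smaller).
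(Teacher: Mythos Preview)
Your argument is correct and achieves the same conclusion as the paper, but by a different device. The paper works logarithmically: it sets $\Psi_2(u) = \log\bigl(\Phi'(e^u)\bigr)$, observes that $\Psi_2'$ is continuous on $(-\infty,\log\tau]$ with $\lim_{u\to-\infty}\Psi_2'(u)=r-1$, hence bounded, and concludes that $\bigl|\log\frac{\Phi'(\eta_{h,k})}{\Phi'(\eta_k)}\bigr| \leq K\,\bigl|\log\frac{\eta_{h,k}}{\eta_k}\bigr|$, which is exactly what Lemma~\ref{lem:etah-k-to-eta-k} controls. Your factorisation $\Phi'(x)=x^{r-1}G(x)$ with $G(0)=rw_r>0$ is an explicit algebraic version of the same insight: both approaches isolate the vanishing factor $x^{r-1}$ so that what remains is governed by a function bounded away from zero. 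Your route is arguably more concrete (and makes visible use of the nonnegativity of the $w_j$ to get $G\geq rw_r$), while the paper's is slightly slicker in that it mirrors the technique already used in Lemma~\ref{lem:etah-k-to-eta-k}. One small point: you should take the interval for the Lipschitz and lower bounds on $G$ to be $[0,\tau]$ rather than $[0,\eta_1]$, since $\eta_{h,1}$ need only satisfy $\eta_{h,1}\leq\tau$ for large $h$, not $\eta_{h,1}\leq\eta_1$.
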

\begin{proof}
  Again, we can assume that $h$ is so large that $\eta_{h,k} \leq \tau$ for all
  $k \geq 1$. The auxiliary function $\Psi_2(u) = \log (\Phi'(e^u))$ is
  continuously differentiable on $(-\infty,\log \tau]$ and satisfies
  $\lim_{u \to -\infty} \Psi_2'(u) = r-1$. Thus its derivative is also bounded,
  and the same argument as in Lemma~\ref{lem:etah-k-to-eta-k} shows that
\begin{equation*}
\Big|\log \frac{\Phi'(\eta_{h,k})}{\Phi'(\eta_k)} \Big| \leq K \Big| \log \frac{\eta_{h,k}}{\eta_k} \Big|
\end{equation*}
for some positive constant $K$. Now the statement follows from Lemma~\ref{lem:etah-k-to-eta-k}.
\end{proof}

\begin{lemma}\label{lem:refined-product}
  There exist positive constants $\lambda_2,\lambda_3$ and $B_3 < 1$ such that, for large enough $h$, 
  \begin{equation}\label{eq:doubly-exp-product-2}
    \prod_{j = 1}^h (\rho_h \Phi'(\eta_{h, j})) = \lambda_2 \lambda_3^h\mu^{r^{h+1}} \big(1 + O(B_3^{r^h})\big),
  \end{equation}
with $\mu$ as in Lemma~\ref{lem:eta-asymp-double-exp}.
\end{lemma}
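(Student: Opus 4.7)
The plan is to systematically replace $\rho_h$ by $\rho$ and $\eta_{h,j}$ by $\eta_j$ inside the product using the convergence lemmata already established in this section, and then expand $\Phi'$ around $0$ to obtain an explicit closed form. First I would factor $\rho_h^h$ out of the product and apply Lemma~\ref{lem:doubly-exp-convergence}, giving $\rho_h^h = \rho^h(1+O(B_1^{r^h}))^h = \rho^h(1+O(B^{r^h}))$ for any $B\in(B_1,1)$, since the polynomial factor $h$ is absorbed by the double-exponential decay. Similarly, by Lemma~\ref{lem:Phi'-convergence},
\begin{equation*}
\prod_{j=1}^h \Phi'(\eta_{h,j}) = \Bigl(\prod_{j=1}^h \Phi'(\eta_j)\Bigr)\bigl(1+O(B_2^{r^h})\bigr)^h.
\end{equation*}
This reduces the problem to analysing $\rho^h \prod_{j=1}^h \Phi'(\eta_j)$.

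Next, since $\Phi'(0)=0$ and $\Phi^{(r)}(0)$ is the first nonvanishing derivative of $\Phi$ at $0$, Taylor's theorem gives $\Phi'(x) = \frac{\Phi^{(r)}(0)}{(r-1)!}x^{r-1}(1+O(x))$ as $x\to 0$. Combined with the expansion $\eta_j = \lambda_1\mu^{r^j}(1+O(B_1^{r^j}))$ from Lemma~\ref{lem:eta-asymp-double-exp}, this yields
\begin{equation*}
\Phi'(\eta_j) = \frac{\Phi^{(r)}(0)\lambda_1^{r-1}}{(r-1)!}\,\mu^{(r-1)r^j}\bigl(1+O(B_1^{r^j})\bigr).
\end{equation*}
I would then take the product over $j=1,\ldots,h$, using the geometric sum $\sum_{j=1}^h r^j = (r^{h+1}-r)/(r-1)$ to collect the exponent of $\mu$ as $r^{h+1}-r$.

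The decisive algebraic simplification comes from the definition $\lambda_1 = (\rho w_r)^{-1/(r-1)}$ in~\eqref{eq:lambda1} together with $w_r = \Phi^{(r)}(0)/r!$, which jointly imply $\rho\Phi^{(r)}(0)\lambda_1^{r-1}/(r-1)! = r\rho w_r\lambda_1^{r-1} = r$. Multiplying by $\rho^h$ therefore produces the clean factor $r^h$. Setting $\lambda_3 := r$ and letting $\Pi := \prod_{j=1}^\infty(1+O(B_1^{r^j}))$, which converges absolutely and to a positive limit because $\sum_j B_1^{r^j}<\infty$, I would define $\lambda_2 := \mu^{-r}\Pi > 0$. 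The tail of the infinite product satisfies $\prod_{j=1}^h(1+O(B_1^{r^j})) = \Pi\bigl(1+O(B_1^{r^{h+1}})\bigr)$, so combining all the error terms and taking $B_3$ to be any constant with $\max(B,B_1^r)\le B_3 < 1$ yields the claimed asymptotic formula~\eqref{eq:doubly-exp-product-2}.

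The main obstacle is the bookkeeping of error terms: multiplicative errors of the form $(1+O(B^{r^h}))^h$ must be absorbed into a single error $1+O(B_3^{r^h})$, which is possible precisely because the double-exponential decay tolerates a polynomial factor in $h$ and because $B_1^{r^{h+1}} = (B_1^r)^{r^h}$. The substantive mathematical content is brief: it lies in the Taylor expansion of $\Phi'$ together with the algebraic identity arising from the definition of $\lambda_1$, which together make the constant $\lambda_3 = r$ emerge cleanly.
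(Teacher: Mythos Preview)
Your approach is essentially the same as the paper's: factor out $\rho_h^h$, replace $\rho_h$ by $\rho$ and $\eta_{h,j}$ by $\eta_j$ via the convergence lemmata, Taylor-expand $\Phi'$ at $0$, and collect the $\mu$-exponent via the geometric sum. You go slightly further than the paper by observing that $\lambda_3 = r\rho w_r\lambda_1^{r-1} = r$ explicitly, which is correct and a pleasant simplification the paper leaves implicit.

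Two small points of care. First, writing $\Pi := \prod_{j\ge 1}(1+O(B_1^{r^j}))$ is notationally ill-posed, since an $O$-expression does not pin down a specific number; the paper instead defines $\Pi := \prod_{j\ge 1} \rho\Phi'(\eta_j)/(\lambda_3\mu^{(r-1)r^j})$, which is a genuine constant whose factors are $1+O(B_1^{r^j})$. Second, your final choice of $B_3$ should also dominate $B_2$ (from the $\Phi'$-replacement step), not just $B$ and $B_1^r$; since $B_2>B_1$ in the paper, taking $B_3$ strictly larger than $B_2$ suffices to absorb all errors of the form $hB_i^{r^h}$.
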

\begin{proof}
First, observe that
  \begin{equation*}
    \prod_{j = 1}^h (\rho_h \Phi'(\eta_{h, j})) = \Big(\rho \big(1 + O(B_1^{r^h})\big) \Big)^h \prod_{j=1}^h \Big( \Phi'(\eta_j) \big(1 + O(B_2^{r^h})\Big)
 = \rho^h \Big( \prod_{j=1}^h \Phi'(\eta_j) \Big) \big(1 + O(h B_2^{r^h})\big)
  \end{equation*}
in view of Lemma~\ref{lem:doubly-exp-convergence} and Lemma~\ref{lem:Phi'-convergence} (recall that $B_2 > B_1$).
Next, Taylor expansion combined with Lemma~\ref{lem:eta-asymp-double-exp} gives us
\begin{equation*}
\Phi'(\eta_k) = \frac{\Phi^{(r)}(0)}{(r-1)!} \eta_k^{r-1} (1 + O(\eta_k)) = \frac{\Phi^{(r)}(0)\lambda_1^{r-1}}{(r-1)!} \mu^{(r-1)r^k} \big(1 + O(B_1^{r^k})\big).
\end{equation*}
Set $\lambda_3 := \rho \frac{\Phi^{(r)}(0)\lambda_1^{r-1}}{(r-1)!} = rw_r \rho \lambda_1^{r-1}$, so that
\begin{equation*}
\Phi'(\eta_k) = \frac{\lambda_3}{\rho} \mu^{(r-1)r^k} \big(1 + O(B_1^{r^k})\big).
\end{equation*}
It follows that the infinite product
\begin{equation*}
\Pi:= \prod_{j = 1}^{\infty} \frac{\rho \Phi'(\eta_j)}{\lambda_3 \mu^{(r-1)r^j}}
\end{equation*}
converges, and that
\begin{equation*}
\prod_{j = 1}^{h} \frac{\rho \Phi'(\eta_j)}{\lambda_3 \mu^{(r-1)r^j}} = \Pi \big(1 + O(B_1^{r^h})\big).
\end{equation*}
Consequently,
\begin{equation*}
\prod_{j = 1}^{h} \Phi'(\eta_j) = \Pi \Big( \frac{\lambda_3}{\rho} \Big)^h \mu^{r^{h+1}-r} \big(1 + O(B_1^{r^h})\big).
\end{equation*}
Putting everything together, the statement of the lemma follows with $\lambda_2 = \Pi \mu^{-r}$ and a suitable choice of $B_3 > B_2$.
\end{proof}

With this estimate for the product term in the determinant of the Jacobian~\eqref{eq:Jacdet}, and the estimate for the sum term 
from Lemma~\ref{lem:doubly-exp-sum-product}, we can now obtain a better asymptotic formula for $\rho_h\Phi'(\eta_{h, 0})$ than that which 
was obtained in~\eqref{eq:doubly-exp-first-rho}.
For large enough $h$, we have that 
  \begin{equation}\label{eq:doubly-exp-second-rho}
    \rho_h\Phi'(\eta_{h, 0}) = 1 + \lambda_2 \lambda_3^h \mu^{r^{h+1}} \big(1 + O(B_3^{r^h})\big).
  \end{equation}
  For the error term, recall that $B_1 < B_3$. Moreover,
  combining Lemmata~\ref{lem:etah-k-to-eta-k} and~\ref{lem:eta-asymp-double-exp} leads to 
  \begin{equation*}
    \eta_{h, h} = \lambda_1 \mu^{r^h}\big(1 + O(B_2^{r^h})\big)
  \end{equation*}
  since $B_2$ was chosen to be greater than $B_1$,
which we can apply to~\eqref{eq:eta-h0-equation}:
\begin{align}
\eta_{h,0} &= \rho_h \big( \Phi(\eta_{h,0}) - \Phi(\eta_{h, h}) + 1 \big) \nonumber \\
&= \rho_h \Big( \Phi(\eta_{h,0}) - \frac{\Phi^{(r)}(0)}{r!} \eta_{h,h}^r + O(\eta_{h,h}^{r+1}) \Big) \nonumber \\
&= \rho_h \Big( \Phi(\eta_{h,0}) - w_r \lambda_1^r \mu^{r^{h+1}}\big(1 + O(B_3^{r^h})\big) \Big) \label{eq:doubly-exp-second-eta}
\end{align}
since $B_3$ was chosen to be greater than $B_1$ (and thus also $\mu$) and
$B_2$. As we did earlier to obtain Lemma~\ref{lem:doubly-exp-convergence}, we
multiply the two equations~\eqref{eq:doubly-exp-second-rho}
and~\eqref{eq:doubly-exp-second-eta} and divide by $\rho_h$ to find that
\begin{equation*}
\eta_{h,0} \Phi'(\eta_{h, 0}) = \Big( \Phi(\eta_{h,0}) - w_r \lambda_1^r \mu^{r^{h+1}}\big(1 + O(B_3^{r^h})\big) \Big) \Big( 1 + \lambda_2 \lambda_3^h \mu^{r^{h+1}} \big(1 + O(B_3^{r^h})\big) \Big).
\end{equation*}
From this, the following result follows now in exactly the same way as Proposition~\ref{prop:rho-convergence} follows from~\eqref{eq:eta-h0-final-implicit}.

\begin{proposition}\label{prop:doubly-exp-rho-convergence}
  For large enough $h$ and a fixed constant $B_4 < 1$, we have that 
  \begin{equation*}
    \eta_{h, 0} = \tau + \frac{\Phi(\tau) \lambda_2 \lambda_3^h - w_r \lambda_1^r}{\tau \Phi''(\tau)} \mu^{r^{h+1}}+ O(\mu^{r^{h+1}}B_4^{r^h})
  \end{equation*}
  and 
  \begin{equation*}
    \rho_h = \rho\Big(1 + \frac{w_r\lambda_1^r}{\Phi(\tau)}\mu^{r^{h+1}} + O(\mu^{r^{h+1}}B_4^{r^h})\Big).
  \end{equation*}
\end{proposition}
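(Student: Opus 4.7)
The plan is to mimic the derivation of Proposition~\ref{prop:rho-convergence}, treating the product of~\eqref{eq:doubly-exp-second-rho} and~\eqref{eq:doubly-exp-second-eta} (divided by $\rho_h$) as an implicit equation for $\eta_{h,0}$ alone, and then recovering $\rho_h$ from~\eqref{eq:doubly-exp-second-rho}. Rearranging the equation
\begin{equation*}
\eta_{h,0}\Phi'(\eta_{h,0}) = \Big(\Phi(\eta_{h,0}) - w_r \lambda_1^r \mu^{r^{h+1}}\bigl(1 + O(B_3^{r^h})\bigr)\Big)\Big(1 + \lambda_2 \lambda_3^h \mu^{r^{h+1}}\bigl(1 + O(B_3^{r^h})\bigr)\Big)
\end{equation*}
into the form
\begin{equation*}
H(\eta_{h,0}) = \eta_{h,0}\Phi'(\eta_{h,0}) - \Phi(\eta_{h,0}) = \bigl(\Phi(\eta_{h,0})\lambda_2 \lambda_3^h - w_r\lambda_1^r\bigr)\mu^{r^{h+1}}\bigl(1 + O(B_3^{r^h})\bigr),
\end{equation*}
where I absorb the cross term $w_r\lambda_1^r\lambda_2\lambda_3^h \mu^{2 r^{h+1}}$ into the error since it is smaller than $\mu^{r^{h+1}}B_3^{r^h}$ for $h$ large.

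Next I exploit the fact, already used in Section~\ref{sec:singularity-asymptotics-exp}, that $H(x) = x\Phi'(x) - \Phi(x)$ is strictly increasing on the relevant interval, satisfies $H(\tau) = 0$, and has $H'(\tau) = \tau\Phi''(\tau) \neq 0$. Since we know from Lemma~\ref{lem:doubly-exp-convergence} that $\eta_{h,0} = \tau + O(B_1^{r^h})$, we may replace $\Phi(\eta_{h,0})$ on the right-hand side by $\Phi(\tau) + O(B_1^{r^h})$, which only perturbs the coefficient of $\mu^{r^{h+1}}$ by an amount that fits into the error $O(\mu^{r^{h+1}}B_4^{r^h})$ for any $B_4$ with $B_3 < B_4 < 1$. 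Inverting $H$ via its Taylor expansion around $\tau$ then yields
\begin{equation*}
\eta_{h,0} = \tau + \frac{\Phi(\tau)\lambda_2\lambda_3^h - w_r\lambda_1^r}{\tau\Phi''(\tau)}\,\mu^{r^{h+1}} + O\bigl(\mu^{r^{h+1}}B_4^{r^h}\bigr),
\end{equation*}
which is the first claim.

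For the second claim, I substitute this expansion back into~\eqref{eq:doubly-exp-second-rho}. Using $\rho\Phi'(\tau) = 1$ and $\tau = \rho\Phi(\tau)$ together with the Taylor expansion $\Phi'(\eta_{h,0}) = \Phi'(\tau) + \Phi''(\tau)(\eta_{h,0}-\tau) + O((\eta_{h,0}-\tau)^2)$, I solve
\begin{equation*}
\rho_h = \frac{1 + \lambda_2\lambda_3^h\mu^{r^{h+1}}\bigl(1 + O(B_3^{r^h})\bigr)}{\Phi'(\eta_{h,0})}
\end{equation*}
and observe that the contributions proportional to $\lambda_2\lambda_3^h$ cancel (this is exactly the cancellation that occurs in the proof of Proposition~\ref{prop:rho-convergence}), leaving
\begin{equation*}
\rho_h = \rho\Big(1 + \frac{w_r\lambda_1^r}{\Phi(\tau)}\mu^{r^{h+1}} + O\bigl(\mu^{r^{h+1}}B_4^{r^h}\bigr)\Big).
\end{equation*}

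The main obstacle is bookkeeping of the error terms: one has to verify that all secondary contributions (the $\lambda_3^h$-dependent terms, which do not decay on their own, and the quadratic remainders in the Taylor expansion of $\Phi'$) are either cancelled exactly by the algebraic identity at $\tau$ or are smaller than $\mu^{r^{h+1}}B_4^{r^h}$ by a factor of $B_4^{r^h}$. Once this is checked, the rest is a direct transcription of the argument used for the exponential case, with $\zeta^h$ replaced throughout by $\mu^{r^{h+1}}$ and the error $O((\zeta B_5)^h)$ by $O(\mu^{r^{h+1}} B_4^{r^h})$.
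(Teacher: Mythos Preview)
Your proposal is correct and follows precisely the approach the paper intends: the paper explicitly states that the result ``follows now in exactly the same way as Proposition~\ref{prop:rho-convergence} follows from~\eqref{eq:eta-h0-final-implicit},'' and your write-up carries out exactly this transcription, including the use of $H(x)=x\Phi'(x)-\Phi(x)$, the replacement of $\Phi(\eta_{h,0})$ by $\Phi(\tau)$ via Lemma~\ref{lem:doubly-exp-convergence}, and the cancellation of the $\lambda_2\lambda_3^h$ contribution when recovering $\rho_h$ from~\eqref{eq:doubly-exp-second-rho}. Your error bookkeeping (absorbing $\lambda_3^h\mu^{r^{h+1}}$ and the quadratic remainders into $O(B_4^{r^h})$ because doubly-exponential decay dominates the at-most-exponential growth of $\lambda_3^h$) is the only point requiring care, and you have handled it correctly.
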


\subsection{An adapted general scheme and the proof of Theorem~\ref{thm:main_outdeg1_forbidden}}\label{sec:prowags-doubly-exp-restated}

In this final section, we will first prove Theorems~\ref{thm:pro-wags-doubly-1}
and~\ref{thm:doubly-exponential-two-points}. Then, we will be able to put all
pieces
together and prove Theorem~\ref{thm:main_outdeg1_forbidden}.

\begin{proof}[Proof of Theorem~\ref{thm:pro-wags-doubly-1}]
  We apply singularity analysis, and use the uniformity condition to obtain 
  \begin{equation*}
    y_{h,n} = \frac{A_h}{\Gamma(-\alpha)}n^{-\alpha-1}\rho_h^{-n}(1 + o(1))
  \end{equation*}
  uniformly in $h$ as $n \to \infty$ as well as 
  \begin{equation*}
    y_n = \frac{A}{\Gamma(-\alpha)}n^{-\alpha-1}\rho^{-n}(1+o(1)).
  \end{equation*}
  Since in addition $A_h \to A$ and $\rho_h = \rho(1+ \kappa \zeta^{r^{h}} + o(\zeta^{r^{h}}))$, it holds that 
  \begin{align*}
    \frac{y_{h, n}}{y_n} &= \Big(\frac{\rho_h}{\rho}\Big)^{-n}(1+o(1)) = \exp{\big({-}
\kappa n \zeta^{r^h} +o(n\zeta^{r^h})\big)}(1+o(1)) \nonumber \\
    &= \exp{\big({-}\kappa n\zeta^{r^h}(1+o(1)) + o(1)\big)}.
\end{align*}
\end{proof}

\begin{proof}[Proof of Theorem~\ref{thm:doubly-exponential-two-points}]
Fix $\epsilon > 0$. If $h \geq m_n + \epsilon = \log_r{\log_d(n)} + \epsilon$, then Theorem~\ref{thm:pro-wags-doubly-1} gives us
\begin{equation*}
  \mathbb{P}(X_n \leq h) \geq \exp{\big({-}\kappa n^{1-r^{\epsilon}} (1+o(1)) + o(1)\big)} = 1 - o(1),
\end{equation*}
thus $X_n \leq h$ with high probability. If $\{m_n\} \leq 1-\epsilon$, then this is the case for $h = \lceil m_n \rceil$, otherwise for $h = \lceil m_n \rceil + 1$.
Similarly, if $h \leq m_n - \epsilon = \log_r{\log_d(n)} - \epsilon$, then Theorem~\ref{thm:pro-wags-doubly-1} gives us
\begin{equation*}
  \mathbb{P}(X_n \leq h) \leq \exp{\big({-}\kappa n^{1-r^{-\epsilon}} (1+o(1)) + o(1)\big)} = o(1),
\end{equation*}
thus $X_n > h$ with high probability. If $\{m_n\} \geq \epsilon$, then this is the case for $h = \lfloor m_n \rfloor$, otherwise for $h = \lfloor m_n \rfloor - 1$. The statement now follows by combining the two parts.
\end{proof}

\begin{remark}\label{remark:periodic-case-2}
  As in Remark~\ref{remark:periodic-case-1}, we indicate the changes which are
  necessary for the case that the period $D$ of $\Phi$ is greater than $1$.

  Theorem~\ref{thm:pro-wags-doubly-1} only depends on singularity analysis. It
  is well known (see \cite[Remark~VI.17]{Flajolet-Sedgewick:ta:analy}) that
  singularity analysis simply introduces a factor $D$ in this situation, and as
  this factor $D$ cancels because it occurs both in the asymptotic expansions
  of $Y_{h}$ as well as $Y$, this theorem remains valid for $n\equiv 1\pmod D$.
\end{remark}

Theorem~\ref{thm:main_outdeg1_forbidden} is now an immediate consequence of
Theorem~\ref{thm:pro-wags-doubly-1} and
Theorem~\ref{thm:doubly-exponential-two-points}. In analogy to the proof of
Theorem~\ref{thm:main_outdeg1_allowed}, the analytic conditions on the
generating functions are provided by
Proposition~\ref{prop:analytic_properties_of_Yh}. The condition on the
asymptotic behaviour of $\rho_h$ is given by
Proposition~\ref{prop:doubly-exp-rho-convergence} (with $\zeta = \mu^r$). Thus
the proof of Theorem~\ref{thm:main_outdeg1_forbidden} is complete.

\bibliographystyle{plain}
\bibliography{bib/cheub}

\end{document}